\numberwithin{equation}{section}
\definecolor{darkgreen}{rgb}{0.008,0.417,0.067}
\definecolor{darkred}{rgb}{0.84,0,0}
\def\smallspace{\negthickspace}
\newcommand\mydef{:=}
\newcounter{subequation}[equation]
\renewcommand{\thesubequation}{\theequation\text{\alph{subequation}}}
\let\extralabel=\label
\newcommand{\subeq}[1]
  {\bgroup\refstepcounter{subequation}\extralabel{#1}\egroup(\thesubequation)}
\newcounter{theorem}
\newtheorem{thm}[theorem]{Theorem}
\newtheorem*{thma*}{Theorem~A}
\newtheorem*{thmb*}{Theorem~B}
\newtheorem*{thmc*}{Theorem~C}
\newtheorem{lemma}[theorem]{Lemma}
\newtheorem{prop}[theorem]{Proposition}
\newtheorem{cor}[theorem]{Corollary}
\theoremstyle{remark}
\newtheorem*{remark*}{Remark}
\newtheorem{remark}[theorem]{Remark}
\newtheorem{example}[theorem]{Example}
\theoremstyle{definition}
\newtheorem{defn}[theorem]{Definition}
\newtheorem{ntn}[theorem]{Notation}
\def\smallspace{\negthickspace}
\newcommand{\espan}{\operatorname{span}}
\newcommand{\stab}[1]{{\operatorname{Stab}_G(#1)}}
\newcommand{\supp}{\operatorname{supp}}
\newcommand{\algtensor}{\otimes}
\newcommand{\CC}{\mathbb{C}}
\newcommand{\NN}{\mathbb{N}}
\newcommand{\ZZ}{\mathbb{Z}}
\newcommand{\FF}{\mathbb{F}}
\newcommand{\Bb}{\mathcal{B}}
\newcommand{\Dd}{\mathcal{D}}
\newcommand{\Ee}{\mathcal{E}}
\newcommand{\Ff}{\mathcal{F}}
\newcommand{\Ww}{\mathcal{W}}
\newcommand{\Gg}{\mathcal{G}}
\newcommand{\Ll}{{L}}
\newcommand{\Oo}{\mathcal{O}}
\newcommand{\Ss}{\mathcal{S}}
\newcommand{\Mm}{\mathcal{M}}
\newcommand{\Jj}{\mathcal{J}}
\newcommand{\act}{\cdot}
\newcommand{\minimatrix}[4]
{\bigl(\begin{smallmatrix}
#1 & #2 \\ #3 & #4\end{smallmatrix} \bigr)}
\date{\today}
\author[R.~Hazrat]{Roozbeh Hazrat}
\address{(Hazrat)
School of Computing, Engineering and Mathematics\\
	Western Sydney University\\
	Parramatta NSW 2150\\
	AUSTRALIA}
\email{R.Hazrat@westernsydney.edu.au}
\author[D.~Pask]{David Pask}
\address{(Pask, Sierakowski, Sims)
School of Mathematics and Applied Statistics \\
	University of Wollongong\\
	Wollongong NSW 2522\\
	AUSTRALIA}	
\email{dpask, asierako, asims@uow.edu.au}
\author[A.~Sierakowski]{Adam Sierakowski}
\author[A.~Sims]{Aidan Sims}
\subjclass[2010]{16D70, 16W50}
\keywords{Exel and Pardo algebras, Tight groupoids}
\thanks{This research was supported by the Australian Research Council.}
\title[An algebraic analogue of Exel--Pardo $C^*$-algebras]{An algebraic analogue of Exel--Pardo $C^*$-algebras}
\date{\today}
\newlength{\hatchspread}
\newlength{\hatchthickness}
\newlength{\hatchshift}
\newcommand{\hatchcolor}{}
\tikzset{hatchspread/.code={\setlength{\hatchspread}{#1}},
         hatchthickness/.code={\setlength{\hatchthickness}{#1}},
         hatchshift/.code={\setlength{\hatchshift}{#1}},
         hatchcolor/.code={\renewcommand{\hatchcolor}{#1}}}
\tikzset{hatchspread=3pt,
         hatchthickness=0.4pt,
         hatchshift=0pt,
         hatchcolor=black}
\begin{document}

\begin{abstract}
We introduce an algebraic version of the Katsura $C^*$-algebra of a pair $A,B$ of integer matrices and an algebraic version of the Exel--Pardo $C^*$-algebra of a self-similar action on a graph. We prove a Graded Uniqueness Theorem for such algebras and construct a homomorphism of the latter into a Steinberg algebra that, under mild conditions, is an isomorphism. Working with Steinberg algebras over non-Hausdorff groupoids we prove that in the unital case, our algebraic version of Katsura $C^*$-algebras are all isomorphic to Steinberg algebras.
\end{abstract}

\maketitle
\section*{Introduction}

Recently in \cite{MR3581326} Exel and Pardo introduced $C^*$-algebras $\Oo_{G,E}$ giving a unified
treatment of two classes of $C^*$-algebras which have attracted significant recent attention,
namely Katsura $C^*$-algebras \cite{MR2400990} and Nekrashevych's self-similar group
$C^*$-algebras \cite{NekraJO,NC}. Katsura $C^*$-algebras are important as they provide concrete
models for all UCT Kirchberg algebras \cite{Kat2}; self-similar $C^*$-algebras have provided the
first known example of a groupoid whose $C^*$-algebra is simple and whose Steinberg algebra over
some fields is non-simple \cite{CESS}. This suggests that algebraic analogues of Exel--Pardo
$C^*$-algebras may be a source of interesting new examples. Partial results have already been
established by Clark, Exel and Pardo in \cite{ClaExePar} who introduced
$\mathcal{O}_{(G,E)}^{\text{alg}}(R)$ for finite graphs $E$. However, as far as we know this
article is the first study of algebraic analogues of Exel--Pardo $C^*$-algebras for infinite
graphs.

In this paper we introduce an algebraic version of Exel--Pardo $C^*$-algebras $\Oo_{G,E}$
providing novel results both when $E$ is infinite and when it is finite. We focus on graphs $E$
that are row-finite with no sources. Up to Morita equivalence, they include all Exel--Pardo
$C^*$-algebras $\Oo_{G,E}$ \cite[Theorem~3.2]{ExeParSta}.

The paper is structured as follows. In Section~\ref{section.one} we introduce the $^*$-algebras
$\Ll_R(G,E)$ as an algebraic version of Exel--Pardo $C^*$-algebras. The main ingredient is an
action of a group $G$ on a graph $E$ which incorporates a notion of a remainder. More
specifically, we take a countable discrete group $G$, an action
\begin{align*}
&(g,v)\mapsto g\act{}v, \ \ \ (g,e)\mapsto g\act{}e
\end{align*}
of $G$ on a row-finite graph $E = (E^0, E^1, r , s )$ with no sources, and a one-cocycle
$\varphi:G\times E^1 \to G$
for the action of $G$ on the edges of $E$. Following \cite{MR3581326}, with this data and a few natural axioms we get an action of $G$ on the space of finite paths $E^*$ which satisfies the following ``self-similarity'' equation
\begin{equation}
\label{selfsim}
g\act{}(\alpha\beta)=(g\act{}\alpha)(\varphi(g,\alpha)\act{}\beta), \ \ \ \text{for all} \ \ g\in G, \alpha\beta\in E^*.
\end{equation}
We refer to Notation~\ref{note} for careful exposition of this setup.

Our algebraic analogue of the Exel--Pardo algebras is described by generators and relations.
Specifically, given a self-similar action of $G$ on $E$ as above, we consider $^*$-algebras
generated by elements $p_{v,h}$ and $s_{e,g}$ indexed by vertices $v$ and edges $e$ of $E$, and by
elements $g,h$ of $G$ under relations that ensure that: the $p_{v, e_G}$ and $s_{e, e_G}$ form an
$E$-family in the sense of Leavitt-path algebras; and for each $v$, the map $g \mapsto p_{v,g}$ is a
unitary representation of $G$; and multiplication amongst the $p_{v,h}$ and $s_{e,g}$ reflect the
structure of the self-similar action (see Definition~\ref{def.LGE}). We prove in
Theorem~\ref{thm.univarsal} that up to $^*$-isomorphism there exists a unique $^*$-algebra
$\Ll_R(G,E)$ over a commutative unital ring $R$ universal for these generators and relations.

The analogous $C^*$-algebras $\Oo_{G, E}$ were first studied for finite graphs in \cite{MR3581326}
and then for countably infinite graphs in \cite{ExeParSta}; our generators and relations are
modelled on the latter, and determine the same $^*$-algebra over $R$, though we omit the proof of
this assertion. For finite graphs, an algebraic analogue $\mathcal{O}_{(G,E)}^{\text{alg}}(R)$ has
also been studied in \cite{ClaExePar}. Our generators and relations appear different to those in
\cite{ClaExePar} because we include a representation $g \mapsto p_{v,g}$ of $G$ for each $v
\in E^0$, rather than a single representation $g \mapsto u_g$ of $G$. This is to avoid the
use of multiplier rings, which would otherwise be necessary in the setting of graphs with
infinitely many vertices; but we show in Proposition~\ref{prop.unital} that our construction
coincides with that of \cite{ClaExePar} when $E^0$ is finite.

After Proposition~\ref{prop.unital}, we consider the algebraic analogue of Katsura $C^*$-algebras which we denote $\Oo_{A,B}^\text{alg}(R)$ and also revisit the algebraic analogue of graph algebras, the Leavitt path algebras $\Ll_R(E)$ \cite{MR2172342}. We prove in Proposition~\ref{prop.AB} and Proposition~\ref{triv.group.case} that both of these are special cases of Exel--Pardo $^*$-algebras. Beyond $\Oo_{(G,E)}^\text{alg}(R)$, $\Oo_{A,B}^\text{alg}(R)$, $\Ll_R(E)$, there are other examples of Exel--Pardo $^*$-algebras, but these examples give a good indication of the scope of the class of algebras we consider. We refer to Figure~\ref{fbuha} for a schematic comparison of the examples in Section~\ref{section.one}.

In Section~\ref{section.two} we study the graded structure of $\Ll_R(G,E)$. In Lemma~\ref{lem.grading} we prove that $\Ll_R(G,E)$ admits a $\ZZ$-grading and we use it to prove the Graded Uniqueness Theorem~\ref{thma}{A}:

\begin{thma*}[Graded Uniqueness]
\label{thma}
Let $(G,E,\varphi)$ be as in Notation~\ref{note}. Let $R$ be a unital commutative $^*$-ring.
Let $\pi \colon \Ll_R(G,E) \to B$ be a $\ZZ$-graded $^*$-algebra homomorphism into a $\ZZ$-graded $^*$-algebra $B$. Suppose that
$$\pi(a)\neq 0 \ \ \ \text{for all} \ \ \ a\in \espan_R\{p_{v,f}: v\in E^0, f\in G\}\setminus \{0\},$$
then $\pi$ is injective.
\end{thma*}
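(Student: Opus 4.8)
The plan is to use the $\ZZ$-grading of Lemma~\ref{lem.grading} twice: first to reduce injectivity of $\pi$ to injectivity on the degree-zero component $\Ll_R(G,E)_0$, and then to reduce that to the hypothesised injectivity on $D:=\espan_R\{p_{v,f}:v\in E^0,\ f\in G\}$. Throughout I write $\Ll_R(G,E)_n$ for the degree-$n$ homogeneous component, and for a path $\mu=\mu_1\cdots\mu_{|\mu|}\in E^*$ I set $s_\mu:=s_{\mu_1,e_G}\cdots s_{\mu_{|\mu|},e_G}$, so that $s_\mu$ has degree $|\mu|$ and $s_\mu^*s_{\mu'}=\delta_{\mu,\mu'}\,p_{s(\mu),e_G}$ whenever $|\mu|=|\mu'|$.

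First I would observe that, since $\pi$ is $\ZZ$-graded, $\ker\pi$ is a graded ideal, so $\ker\pi=\bigoplus_{n}(\ker\pi\cap\Ll_R(G,E)_n)$ and it suffices to show each homogeneous component of $\ker\pi$ vanishes. Fix a homogeneous $a$ of degree $n$ with $\pi(a)=0$; replacing $a$ by $a^*$ (of degree $-n$, with $\pi(a^*)=\pi(a)^*=0$) I may assume $n\ge 0$. Because $E$ is row-finite with no sources, there is a finite set $F\subseteq E^0$ with $a=qa$, where $q:=\sum_{v\in F}p_{v,e_G}=\sum_{\mu}s_\mu s_\mu^*$, the last sum taken over the (finitely many) length-$n$ paths $\mu$ with $r(\mu)\in F$, obtained by iterating the Cuntz--Krieger relation. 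Hence $a=\sum_\mu s_\mu\,(s_\mu^*a)$ with each $s_\mu^*a\in\Ll_R(G,E)_0$ and $\pi(s_\mu^*a)=\pi(s_\mu)^*\pi(a)=0$; so once injectivity of $\pi$ on $\Ll_R(G,E)_0$ is known, every $s_\mu^*a=0$ and therefore $a=0$. This reduces the theorem to the degree-zero case. Note that this reduction avoids any appeal to positivity of $a^*a$, relying only on the local-unit structure.

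To prove $\pi$ is injective on $\Ll_R(G,E)_0$, I would use that $\Ll_R(G,E)_0$ is the increasing union of the subspaces $A_n:=\espan_R\{\,s_\mu p_{v,f}s_\nu^*:|\mu|=|\nu|=n,\ s(\mu)=s(\nu)=v,\ f\in G\,\}$, with $A_0=D$ and $A_n\subseteq A_{n+1}$ again by the Cuntz--Krieger relation; the identification of $\Ll_R(G,E)_0$ with this union is the normal-form content of Theorem~\ref{thm.univarsal}. Given $a\in A_n$ with $\pi(a)=0$, I would compress to the core: for all length-$n$ paths $\alpha,\beta$ the element $s_\alpha^*a s_\beta$ lies in $D$ and satisfies $\pi(s_\alpha^*a s_\beta)=\pi(s_\alpha)^*\pi(a)\pi(s_\beta)=0$, whence $s_\alpha^*a s_\beta=0$ by hypothesis. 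Choosing finite $F,F'\subseteq E^0$ so that $a=qaq'$ with $q=\sum_{r(\alpha)\in F}s_\alpha s_\alpha^*$ and $q'=\sum_{r(\beta)\in F'}s_\beta s_\beta^*$ local units, I then get $a=\sum_{\alpha,\beta}s_\alpha\,(s_\alpha^*a s_\beta)\,s_\beta^*=0$. This establishes injectivity on each $A_n$, hence on $\Ll_R(G,E)_0$, and with the previous paragraph completes the proof.

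The main obstacle is the single identity underpinning the compression step: that $s_\alpha^*a s_\beta$ actually lands in $D$ (equivalently, that $s_\alpha^*\,s_\mu p_{v,f}s_\nu^*\,s_\beta$ collapses to a combination of the $p_{w,g}$ when $|\alpha|=|\beta|=|\mu|=|\nu|$). In the plain Leavitt path algebra case this is immediate from $s_\alpha^*s_\mu=\delta_{\alpha,\mu}p_{s(\mu),e_G}$, but here the self-similarity equation \eqref{selfsim} and the cocycle $\varphi$ intervene whenever group labels are moved past edges, so I would verify the required commutation and orthogonality relations directly from Definition~\ref{def.LGE}, checking that every group twist produced is absorbed into an element of $D$. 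Once this is in hand, the argument above uses nothing beyond the defining relations, the local-unit structure, and the hypothesised injectivity on $D$; in particular it does not require a basis for $\Ll_R(G,E)$, only the spanning statement for $\Ll_R(G,E)_0$.
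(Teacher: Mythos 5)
Your argument is correct in substance and is essentially the paper's own proof, repackaged: both rest on the grading of Lemma~\ref{lem.grading}, on homogenising path lengths via the twisted Cuntz--Krieger expansion $p_{v,f}=\sum_{e\in (f^{-1}\act{}v)E^1}s_{f\act{}e,\varphi(f,e)}s_{e,e_G}^*$, and on compressing by $s_{\alpha,e_G}^*(\,\cdot\,)s_{\beta,e_G}$ into $\Dd$, where the hypothesis applies, before reassembling with the idempotents $s_{\alpha,e_G}s_{\alpha,e_G}^*$. The paper does this in a single stage, compressing a degree-$n$ component directly after arranging all the $\alpha_i$ (resp.\ $\beta_i$) to share a common length, and reassembles only over the pairs $(\alpha_j,\beta_j)$ occurring in the expansion via the partition $F=\bigsqcup F(j)$; you instead first strip the degree with a one-sided compression and then compress two-sidedly in degree zero using local units. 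That difference is organisational, not mathematical, and the ``main obstacle'' you isolate is exactly the identity the paper verifies from Definition~\ref{def.LGE}\ref{GET.c}--\ref{GET.e}, namely
\begin{equation*}
s_{\alpha,e_G}^*\,\bigl(s_{\mu,e_G}\,p_{v,f}\,s_{\nu,e_G}^*\bigr)\,s_{\beta,e_G}
=\delta_{\alpha,\mu}\,\delta_{\nu,\beta}\;p_{s(\alpha),e_G}\,p_{v,f}\,p_{s(\beta),e_G}\in\Dd .
\end{equation*}

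One slip should be fixed, though it does not affect the strategy: in your spanning claim for the degree-zero component the source condition must be $s(\mu)=v$ and $s(\nu)=f^{-1}\act{}v$ (equivalently $s(\mu)=f\act{}s(\nu)$), not $s(\mu)=s(\nu)=v$. Indeed $p_{v,f}\,s_{\nu,e_G}^*=\bigl(s_{\nu,e_G}\,p_{f^{-1}\act{}v,\,f^{-1}}\bigr)^*=\delta_{f^{-1}\act{}v,\,s(\nu)}\,(s_{\nu,f^{-1}})^*$, so with your condition the monomial $s_{\mu,e_G}p_{v,f}s_{\nu,e_G}^*$ vanishes unless $f\in\stab{v}$, and the union of your subspaces would miss the monomials $s_{\alpha,g}s_{\beta,e_G}^*$ with $s(\alpha)=g\act{}s(\beta)\neq s(\beta)$, which genuinely occur by Lemma~\ref{generators.span}. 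With the corrected condition every step you wrote goes through verbatim. A smaller point of attribution: the spanning statement for the homogeneous components is the content of Lemma~\ref{generators.span} together with \eqref{small.2} and Lemma~\ref{lem.grading}, not of Theorem~\ref{thm.univarsal}, which only gives the universal property.
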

We then study the subalgebra $\Dd\mydef\espan_R\{p_{v,f}: v\in E^0, f\in G\}$ inside of $\Ll_R(G,E)$. We provide a structural characterisation of $\Dd$ as a direct sum of matrix algebras of certain $^*$-algebras over $R$ (see Theorem~\ref{thmb.new}).

In Section~\ref{section.three} we revisit and generalise some of the work of Exel, Pardo and Clark in \cite{ClaExePar}. There they considered (among other things) an algebraic analogue of the well known isomorphism of \cite{MR3581326} between $\Oo_{G,E}$ and the groupoid  $C^*$-algebra $C^*(\Gg_{\text{tight}}(\Ss_{G,E}))$ associated to a groupoid of germs constructed from $G$ and $E$ (see Definition~\ref{def.GSEG}). In particular, they proved that $\Oo_{(G,E)}^\text{alg}(R)$ is isomorphic to the Steinberg algebra $A_R(\Gg_{\text{tight}}(\Ss_{G,E}))$ whenever $E$ is finite and $R=\CC$ (see Remark~\ref{the.remark}). In Proposition~\ref{nonzero} we prove that there always exists a canonical $^*$-homomorphism $$\pi_{G,E}\colon \Ll_R(G,E) \to A_R(\Gg_{\text{tight}}(\Ss_{G,E})).$$ When the groupoid is Hausdorff we prove that this $\pi_{G,E}$ is an isomorphism:
\begin{thmb*}[The Hausdorff case]
\label{thmb}
Let $(G,E,\varphi)$ be as in Notation~\ref{note}. Let $R$ be a unital commutative $^*$-ring. If $\Gg_{\text{tight}}(\Ss_{G,E})$ is Hausdorff then
$$\pi_{G,E}\colon \Ll_R(G,E) \to A_R(\Gg_{\text{tight}}(\Ss_{G,E}))$$
from Proposition~\ref{nonzero} is a $^*$-isomorphism.
\end{thmb*}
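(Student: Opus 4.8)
The plan is to prove injectivity and surjectivity separately, obtaining injectivity from the Graded Uniqueness Theorem~A. First I would equip $A_R(\Gg_{\text{tight}}(\Ss_{G,E}))$ with its natural $\ZZ$-grading. The tight groupoid carries a continuous $\ZZ$-valued cocycle $c$ recording the length difference $|\alpha|-|\beta|$ in a germ of the form $[\alpha,g,\beta]$, and this induces a grading of the Steinberg algebra whose degree-$n$ component consists of those functions supported on $c^{-1}(n)$. I would then verify that $\pi_{G,E}$ is graded by checking it on generators: each $\pi_{G,E}(p_{v,f})$ is supported on $c^{-1}(0)$ while each $\pi_{G,E}(s_{e,g})$ is supported on $c^{-1}(1)$, which is immediate from the construction of $\pi_{G,E}$ in Proposition~\ref{nonzero} and from the grading of Lemma~\ref{lem.grading}. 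Theorem~A then reduces injectivity to showing that $\pi_{G,E}$ does not annihilate any nonzero element of $\Dd=\espan_R\{p_{v,f}: v\in E^0, f\in G\}$.

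The diagonal step is where the Hausdorff hypothesis is used. Because $\Gg_{\text{tight}}(\Ss_{G,E})$ is Hausdorff, every element of $A_R(\Gg_{\text{tight}}(\Ss_{G,E}))$ is a genuine locally constant compactly supported $R$-valued function; consequently a nonzero such function is nonzero at some point of the groupoid, and indicator functions of distinct compact open bisections are $R$-linearly independent. I would use the structural description of $\Dd$ from Theorem~\ref{thmb.new} to determine precisely which of the $p_{v,f}$ are forced to coincide and which are independent, and match this against the germ relation in $\Gg_{\text{tight}}(\Ss_{G,E})$: two generators $p_{v,f}$ and $p_{v,f'}$ have equal image exactly when $f$ and $f'$ determine the same germ over the cylinder of $v$. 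Given a nonzero $a=\sum r_{v,f}\, p_{v,f}\in\Dd$, the surviving germs are then genuinely distinct, so the functions $\pi_{G,E}(p_{v,f})$ are indicators of distinct compact open sets and, by pointwise detection in the Hausdorff case, their combination $\pi_{G,E}(a)$ is nonzero. Hence $\pi_{G,E}|_\Dd$ is injective and Theorem~A yields injectivity of $\pi_{G,E}$.

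For surjectivity I would use that $A_R(\Gg_{\text{tight}}(\Ss_{G,E}))$ is spanned by indicator functions of compact open bisections, which in a tight groupoid of germs admit a basis indexed by the generating inverse semigroup $\Ss_{G,E}$. Each such basic bisection is the image under $\pi_{G,E}$ of an explicit product of generators $p_{v,f}$, $s_{e,g}$ and their adjoints, obtained from the multiplicative relations in Definition~\ref{def.LGE} together with the self-similarity identity~\eqref{selfsim}. Since $R$-linear combinations of these products exhaust the Steinberg algebra, $\pi_{G,E}$ is surjective, completing the proof.

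I expect the main obstacle to be the diagonal step: correctly reconciling the abstract relations in $\Dd$ coming from Theorem~\ref{thmb.new} with the germ equivalence defining $\Gg_{\text{tight}}(\Ss_{G,E})$, and confirming that Hausdorffness is exactly what prevents a nonzero diagonal element from cancelling to zero in the Steinberg algebra. The grading and surjectivity steps should be comparatively routine, the only subtlety being to check that the cocycle $c$ is well defined and continuous on the groupoid of germs.
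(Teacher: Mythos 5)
Your grading step and your surjectivity step are both sound (the former is exactly Lemma~\ref{non-hausdorff}, which the paper proves even without Hausdorffness, and the latter is immediate since $A_R(\Gg_{\text{tight}}(\Ss_{G,E}))=\espan_R\{1_{\Theta_s}:s\in\Ss_{G,E}\}$ and $1_{\Theta_{(\alpha,g,\beta)}}=\pi_{G,E}(s_{\alpha,g}s_{\beta,e_G}^*)$). The genuine gap is the diagonal step, and it sits precisely in your claim that ``indicator functions of distinct compact open bisections are $R$-linearly independent.'' Pairwise distinct compact open sets can overlap, and overlapping indicators satisfy nontrivial linear relations even when the groupoid is Hausdorff. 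Concretely, take the Katsura triple with vertices $u,v,w$, with $A_{uv}=2$, $A_{uw}=3$, $B_{uv}=B_{uw}=1$, and a single loop $c_v\mydef e_{vv0}$ at $v$ and $c_w\mydef e_{ww0}$ at $w$ with $A$-entry $1$ and $B$-entry $0$. For every $m\neq 0$ and every vertex there are only finitely many minimal strongly fixed paths (through $v$: the two paths $e_{uvn}c_v$, and only when $2\mid m$; through $w$: the three paths $e_{uwn}c_w$, and only when $3\mid m$; at $v$ and $w$: the loops themselves), so the groupoid is Hausdorff by Proposition~\ref{when.hausdorff}. Writing $U_m\mydef\Theta_{(u,m,u)}$, one checks that $U_m$ is the disjoint union of a $v$-part depending only on $m\bmod 2$ and a $w$-part depending only on $m\bmod 3$; hence $U_m=U_{m'}$ iff $m\equiv m'\pmod 6$, and yet $1_{U_0}+1_{U_5}=1_{U_2}+1_{U_3}$ although $U_0,U_2,U_3,U_5$ are pairwise distinct. (This is a Hausdorff adaptation of Example~\ref{exp.2}, whose non-Hausdorffness comes from the vertex $v'$, not from the overlap pattern at $u$.) So what the diagonal step actually requires is the implication $\sum_m r_m 1_{\Theta_{(u,m,m^{-1}\act{}u)}}=0\Rightarrow\sum_m r_m p_{u,m}=0$ in $\Ll_R(G,E)$, i.e.\ that every linear relation among these indicators is already a consequence of the defining relations (in the example above, that $p_{u,0}+p_{u,5}=p_{u,2}+p_{u,3}$ holds in $\Ll_R(\ZZ,E)$); your proposal supplies no mechanism for this. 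Grouping ``generators with equal image'' is moreover circular: that $\pi_{G,E}(p_{u,m})=\pi_{G,E}(p_{u,m'})$ forces $p_{u,m}=p_{u,m'}$ is itself an instance of the injectivity you are trying to prove. Finally, Theorem~\ref{thmb.new} cannot deliver ``precisely which of the $p_{v,f}$ coincide'': it assumes finite orbits (or $E$ finite), which Theorem~B does not, and it describes $\Dd$ only relative to the abstract algebras $\Ww_v$, about whose internal collapsing the paper expressly warns (beginning of Section~\ref{section.two}) that the possibilities are ``virtually endless'' and example-dependent.

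The paper's proof of Theorem~B avoids all of this and never invokes Theorem~A. For each compact open bisection $U$ it chooses a finite disjoint decomposition $U=\bigsqcup_{s\in J}\Theta_s$ and sets $t_U\mydef\sum_{(\alpha,g,\beta)\in J}s_{\alpha,g}s_{\beta,e_G}^*$; the heart of the argument (Lemma~\ref{lem.tU}) is a refinement computation showing $t_U$ is independent of the chosen decomposition --- this is exactly where coincidences and relations of the kind exhibited above get verified inside $\Ll_R(G,E)$. Lemma~\ref{rep.tU} then checks \eqref{item:zero}--\eqref{item:additive}, and the universal property of Steinberg algebras of Hausdorff ample groupoids (\cite[Proposition~2.3]{MR3372123}) converts $U\mapsto t_U$ into an inverse for $\pi_{G,E}$; Hausdorffness enters both there and in decomposing compact opens into finitely many disjoint basic bisections. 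The route you propose --- grading, Theorem~A, then a diagonal analysis --- is the one the paper reserves for the non-Hausdorff Section~\ref{section.four}, where it succeeds only under additional hypotheses (disjoint cylinders for distinct minimal strongly fixed paths, together with finiteness of $\mathcal{F}_u$ or the existence of a non--strongly-fixed path in $Z(u)$) and with the disjointification Lemmas~\ref{Um=Uniff}--\ref{final110paths.new} doing the work your independence claim was meant to do. To repair your proof you would either have to prove the displayed implication directly (which essentially reproduces Lemma~\ref{lem.tU}) or switch to the paper's representation-of-bisections argument.
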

Using Theorem~\ref{thmb}{B} we can apply existing machinery \cite{AdelaR, ExePar, MR3581326} to describe precisely when $\Ll_R(G,E)$ is simple and provide sufficient conditions for $\Ll_R(G,E)$ to be  simple and purely infinite. We do this in Proposition~\ref{simple.LGE} and Proposition~\ref{pi.LGE}.

We have not proved a general non-Hausdorff version of Theorem~\ref{thmb}{B}, but we obtain partial results in Section~\ref{section.four}. In particular we show that the $^*$-algebra $\Oo_{A,B}^\text{alg}(R)$ associated to finite integer matrices $A, B$ is always isomorphic to the associated Steinberg algebra:
\begin{thmc*}[Steinberg--Katsura $^*$-algebras]
\label{thmc}
Fix $N\in \NN$ and matrices $A, B\in M_N(\ZZ)$ such that $A_{ij}\geq 0$ and $\sum_jA_{ij}>0$ for all $i$. Let $\Gg_{A,B}$ be the groupoid of germs for the Katsura triple $(\ZZ,E,\varphi)$ associated to $A$ and $B$ as in Definition~\ref{def.graph.for.AB} and Definition~\ref{def.GSEG}. Then
$$\Ll_R(\ZZ,E) \cong \Oo_{A,B}^\text{alg}(R)\cong A_R(\Gg_{A,B}).$$
\end{thmc*}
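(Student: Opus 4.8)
The plan is to establish the two isomorphisms separately. The first, $\Ll_R(\ZZ,E)\cong\Oo_{A,B}^{\text{alg}}(R)$, is immediate from Proposition~\ref{prop.AB}, which already realises $\Oo_{A,B}^{\text{alg}}(R)$ as the Exel--Pardo $^*$-algebra of the Katsura triple $(\ZZ,E,\varphi)$; the two presentations therefore coincide. All the content lies in the second isomorphism $\Ll_R(\ZZ,E)\cong A_R(\Gg_{A,B})$, and for this I would take the canonical $^*$-homomorphism $\pi:=\pi_{\ZZ,E}$ of Proposition~\ref{nonzero} and prove that it is bijective.

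Injectivity I would obtain from the Graded Uniqueness Theorem~A. The map $\pi$ is $\ZZ$-graded: the grading on $\Ll_R(\ZZ,E)$ from Lemma~\ref{lem.grading} is carried by $\pi$ to the grading on $A_R(\Gg_{A,B})$ induced by the canonical continuous length cocycle $\Gg_{A,B}\to\ZZ$. Moreover $\pi$ is nonzero on each nonzero element of $\Dd=\espan_R\{p_{v,f}\}$ by Proposition~\ref{nonzero}. Theorem~A then applies verbatim and yields injectivity. The point to stress is that this argument never mentions the topology of $\Gg_{A,B}$, so it is valid whether or not the groupoid is Hausdorff; the whole difficulty of the non-Hausdorff case is pushed into surjectivity.

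Surjectivity is where I expect the main obstacle to lie. As $\Gg_{A,B}$ is ample, $A_R(\Gg_{A,B})=\espan_R\{1_U:U\text{ a compact open bisection}\}$, and the images $\pi(s_{e,n})$ and $\pi(p_{v,n})$ are indicators of the basic bisections $\Theta(\alpha,n,\beta)$ arising from the inverse semigroup $\Ss_{\ZZ,E}$. Because $\Ss_{\ZZ,E}$ is closed under multiplication and the self-similarity equation~\eqref{selfsim} makes the assignment a homomorphism, the convolution product of two such indicators is again a basic-bisection indicator; hence the image of $\pi$ equals $\espan_R\{1_{\Theta(\alpha,n,\beta)}\}$. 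In the Hausdorff setting (Theorem~B) one finishes by disjointifying an arbitrary $U$ into basic bisections, using that compact open sets are then closed. For $\Gg_{A,B}$ this can fail: a compact open bisection need not be closed, and the set-difference of two basic bisections need not be compact open.

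The crux is therefore a spanning lemma asserting that every $1_U$ already lies in $\espan_R\{1_{\Theta(\alpha,n,\beta)}\}$ despite the non-Hausdorffness. I would prove it by exploiting the explicit structure of the Katsura groupoid: $E$ is finite, $G=\ZZ$, and $\varphi$ is read off from $B$, so the germs and their coincidences are completely combinatorial. Concretely, I would cover $U$ by finitely many basic bisections and run an inclusion--exclusion over the commuting idempotents they generate; the subtlety absent in the Hausdorff case is to pin down exactly when two germs are equal, i.e.\ when $[\alpha,n,\beta]=[\alpha',n',\beta']$ in $\Gg_{A,B}$, since it is precisely these coincidences---governed by $A$, $B$ and the relevant path lengths---that make overlaps of basic bisections non-closed. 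Making this identification explicit and checking that the inclusion--exclusion terminates within the basic indicators is, I expect, the hard technical core. Granting it, surjectivity follows and, together with injectivity, gives $\Ll_R(\ZZ,E)\cong A_R(\Gg_{A,B})$, completing the chain of isomorphisms in the statement.
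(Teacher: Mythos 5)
Your reduction of the first isomorphism to Proposition~\ref{prop.AB} matches the paper, but your treatment of the second isomorphism inverts where the difficulty actually sits, and your injectivity step has a genuine gap. You claim that ``$\pi$ is nonzero on each nonzero element of $\Dd$ by Proposition~\ref{nonzero}''; that proposition only shows that the individual generators $p_{v,f}$, $s_{e,g}$ have nonzero images. Injectivity of $\pi$ on all of $\Dd=\espan_R\{p_{v,f}\}$ is precisely what fails to be automatic in the non-Hausdorff case: Example~\ref{exp.2} exhibits a Katsura groupoid in which the indicators satisfy the nontrivial relation \eqref{non.lin.ind}, namely $1_{U_0}+2\cdot 1_{U_1}+1_{U_2}=1_{U_3}+2\cdot 1_{U_4}+1_{U_5}$ with $U_m=\Theta_{(u,m,u)}$, so before the Graded Uniqueness Theorem~A can be applied one must show that every such relation among the indicator functions is \emph{already satisfied} by the corresponding elements $p_{u,m}$ of $\Ll_R(\ZZ,E)$. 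This is the technical core of the paper's Section~4: Lemma~\ref{Um=Uniff} characterises exactly when two germs $[(u,m,m^{-1}\act{}u),z]$ and $[(u,n,n^{-1}\act{}u),z]$ coincide (via paths strongly fixed by $nm^{-1}$); Lemma~\ref{with111paths} handles vertices admitting a non-strongly-fixed infinite path; and Lemmas~\ref{with110paths.new}--\ref{final110paths.new} disjointify $\Theta_{(u,m,m^{-1}\act{}u)}$ along the finite set $\mathcal{F}_u$ of minimal strongly fixed paths and transfer the resulting partitions back into identities in $\Ll_R(\ZZ,E)$ via Lemma~\ref{lem.tU}. The paper's proof of Theorem~C then consists of verifying from the matrices $A$ and $B$ that Katsura triples satisfy the hypotheses of Theorem~\ref{non.Hasu.EG}: distinct minimal strongly fixed paths have disjoint cylinders (the entries $B_{r(\gamma_i)s(\gamma_i)}$ are nonzero before, and zero at, the last edge), and when every $x\in Z(u)$ is strongly fixed, $\mathcal{F}_u\subseteq uE^N$ is finite by a pigeonhole argument over $N=|E^0|$. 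None of this appears in your sketch.

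By contrast, the surjectivity you identify as the crux is essentially free, and your proposed inclusion--exclusion over idempotents is unnecessary. Lemma~\ref{lem.tU}(1) shows, without any Hausdorffness assumption, that every compact open bisection $U$ is a finite \emph{disjoint} union of basic bisections $\Theta_s$: the source sets $s(\Theta_{(\alpha,g,\beta)})=Z(\beta)$ are cylinders, hence pairwise nested or disjoint, and since $s|_U$ is injective the same dichotomy holds for the $\Theta_s$ covering $U$, so one simply discards the superfluous ones --- no set differences, and hence no failure of closedness, ever arises. Consequently $A_R(\Gg_{A,B})=\espan_R\{1_{\Theta_s}: s\in\Ss_{\ZZ,E}\}$ lies in the image of $\pi$ with no analysis of germ coincidences on this side. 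In short: you correctly name the relevant combinatorics of germ coincidences governed by $A$ and $B$, but you assign it to the wrong half of the argument, while the actually hard half --- injectivity of $\pi$ on $\Dd$ --- is asserted on the strength of a proposition that does not deliver it.
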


Theorem~\ref{thmc}{C} is obtained by proving $\Ll_R(G,E) \cong A_R(\Gg_{\text{tight}}(\Ss_{G,E})) $ for a broad class of self-similar actions $(G,E,\varphi)$  (see Theorem~\ref{non.Hasu.EG}) and then applying this result to Katrura triples.

\numberwithin{theorem}{section}
\section{The algebraic version of Exel--Pardo $C^*$-algebras}
\label{section.one}

The $C^*$-algebras $\Oo_{G,E}$ unify many previously known classes of $C^*$-algebras, including graph $C^*$-algebras, Katsura $C^*$-algebras and $C^*$-algebras associated to self-similar groups \cite{MR3581326}. In the algebraic setting much less is known. In this section we define the Exel--Pardo $^*$-algebra $\Ll_R(G,E)$ as an algebraic analogue of the Exel--Pardo $C^*$-algebra and compare it to other known algebras. Exel, Clark and Pardo have already made a definition of $\Oo_{(G,E)}^\text{alg}(R)$ when $E$ is finite and we show that our definition is a genuine generalisation of theirs (see Proposition~\ref{prop.unital}). We do not attempt to give an exhaustive list of examples, but we will consider how $\Ll_R(G,E)$ relates to an algebraic analogue of Katsura $C^*$-algebras associated to infinite matrices, and an algebraic analogue of graph $C^*$-algebras of infinite graphs, the Levitt path algebras $\Ll_R(E)$.

We start with a few definitions. Following \cite{MR1626528},
a \emph{directed graph} $E$ consists of countable sets $E^0$, $E^1$
of vertices and edges,
and maps $r, s \colon E^1 \to E^0$
describing the range and source of edges.
The graph is \emph{row-finite} if $vE^1\mydef r^{-1}(v)$ is finite for each $v\in E^0$, and has \emph{no sources} if $vE^1$ is non-empty for each $v\in E^0$.

A $^*$-algebra over a $^*$-ring $R$ is an algebra equipped with a map $a\mapsto a^*$ called an \emph{involution} satisfying that $(a^*)^*=a$, $(ab)^*=b^*a^*$ and $(ra+b)^*=r^*a^*+b^*$. Let $A$ be such an algebra. We call $p\in A$ a \emph{projection} if $p=p^*=p^2$, we call $s\in A$ a \emph{partial isometry} if $s=ss^*s$ and we call $u\in A$ a \emph{partial unitary} if $u^*u=uu^*=(u^*u)^2$. Two projections are mutually \emph{orthogonal} if their product is zero.

In this paper we use the convention from \cite{MR2135030} where paths read from right to left when defining graph algebras, hence the adjusted Definition~\ref{def.E.family}.
\begin{defn}[{\cite{MR1626528}}]
\label{def.E.family}
If $E$ is a row-finite directed graph, an \emph{$E$-family} in a $^*$-algebra $A$ consists of a set $\{P_v : v\in E^0\}$ of mutually orthogonal projections and a set $\{S_e : e\in E^1\}$ of partial isometries in $A$ such that
\begin{align}
\label{ref11}&S_{e}^*S_{f}=\delta_{e,f} P_{s(e)} \ \text{for all}\ e,f\in E^1, \ \text{and}\\
\label{ref12}&P_v=\sum_{e\in vE^1} S_{e}S_{e}^* \ \text{for all} \ v\in r(E^1).
\end{align}
\end{defn}

Let $E$ be a directed graph. Following \cite{MR3581326}, by an \emph{automorphism} of $E$ we mean a bijective map $\sigma \colon E^0\,\sqcup\, E^1 \to E^0\,\sqcup\, E^1$ such that $\sigma(E^i ) = E^i$, for $i=0,1$ and such that $r \circ \sigma = \sigma \circ r$, and $s \circ \sigma = \sigma \circ s$. By an \emph{action} of a group $G$ on $E$ we shall mean a group homomorphism $g\mapsto \sigma_g$ from $G$ to the group of all automorphisms of $E$. We often write $g\act{e}$ instead of $\sigma_g(e)$. The unit in a group $G$ is denoted $e_G$.

Let $X$ be a set, and let $\sigma$ be an action of a group $G$ on $X$ (i.e., a homomorphism from $G$ to the group of bijections from $X$ to $X$). A map $\varphi\colon G\times X\to G$ is a \emph{one-cocycle} for $\sigma$ if $$\varphi(gh,x)=\varphi(g, \sigma_h(x))\varphi(h,x)$$ for all $g, h\in G$, and all $x\in X$, see \cite{MR3581326}.

\begin{ntn}\label{note}
The quadruple $(G,E,\sigma,\varphi)$, sometimes written as a triple $(G,E,\varphi)$ or a pair $(G,E)$, will denote a countable discrete group $G$, a row-finite graph $E$ with no sources, an (occasionally unnamed) action $\sigma$ of $G$ on $E$ and a one-cocycle $\varphi \colon {G \times E^1}\to G$ for the restriction of $\sigma$ to $E^1$ such that for all $g\in G, e\in E^1, v\in E^0$
\begin{equation}
\label{axionGE}
\varphi(g,e)\act{}v=g\act{}v.
\end{equation}
\end{ntn}

\begin{remark}
The axiom \eqref{axionGE} implies the apparently more general self-similarity condition \eqref{selfsim} (see Lemma \ref{tech.lemma}\eqref{elfsim}). However,  \eqref{selfsim} can also be obtained if we only assume the condition that $\varphi(g,e)\act{}s(e)={g\act{}s(e)}$ whenever  $g\in G, e\in E^1$. Thus, the constraint \eqref{axionGE} might seem unnatural. However, as shown in \cite{MR3581326}, the most prominent classes of examples satisfy this constraint, see \cite[p.~1049]{MR3581326}. To remove this constraint it is arguably more natural to work in the setting of self-similar action of groupoids as in \cite{LRRW}.
\end{remark}


\begin{remark}
\label{3cases}
In this section we consider 3 types of triples $(G,E,\varphi)$:
\begin{enumerate}
\item the triples $(G,E,\varphi)$ where $E$ is finite.
\item the Katsura triples as defined in Definition~\ref{def.graph.for.AB}.
\item the triples $(G,E,\varphi)$ where $G$ is trivial.
\end{enumerate}
In subsection  \ref{The unital case}--\ref{triv.group} we will see how they generate 3 important classes of algebras that we schematically illustrate on Figure~\ref{fbuha}. Each of these serves as an example of our more general construction.
\end{remark}

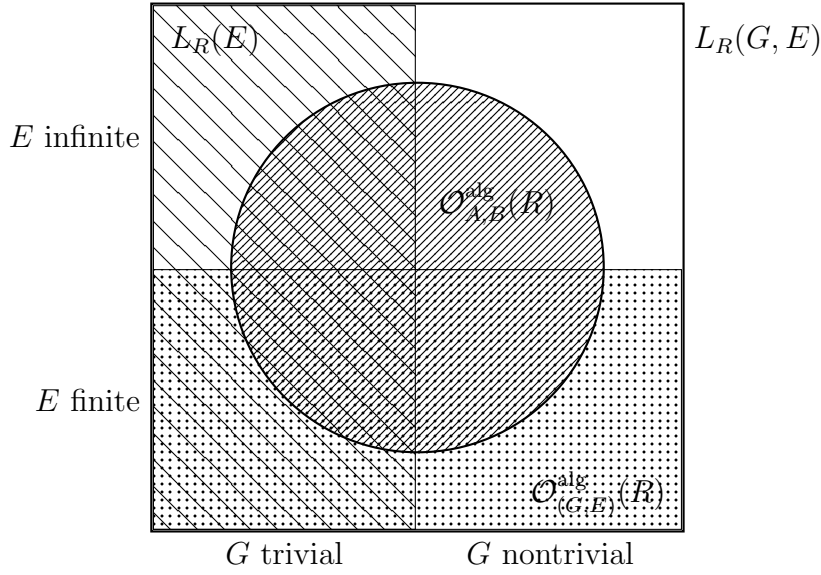
\begin{figure}
\centering
\[
\begin{tikzpicture}[scale=7]
\draw[thick]
  (0,0) rectangle (1,1);
\draw
  (0,0.75) node[anchor=east]  {$E$ infinite}
  (0,0.25) node[anchor=east]  {$E$ finite}
  (0.25,0) node[anchor=north] {$G$ trivial}
  (0.75,0) node[anchor=north] {$G$ nontrivial};
\draw
  (0.12,0.93) node {$L_R(E)$}
    (1.14,0.93) node {$\Ll_R(G,E)$}
  (0.84,0.07) node {$\mathcal{O}_{(G,E)}^{\text{alg}}(R)$}
 (0.65,0.62) node {$\Oo_{A,B}^\text{alg}(R)$};
   \draw [black, thick, pattern=north east lines] (0.5,0.5) circle [radius=0.35];
   \draw [black, pattern=custom north west lines, hatchspread=10pt] (0.004,0.004) rectangle (0.496,0.996);
   \draw [black, pattern=dots] (0.004,0.004) rectangle (0.996,0.496);
\end{tikzpicture}
\]
\caption{The dotted area indicates that each unital Exel--Pardo $^*$-algebra is isomorphim to some $\mathcal{O}_{(G,E)}^{\text{alg}}(R)$.} \label{fbuha}
\end{figure}

We now present a bit more terminology and then state Theorem~\ref{thm.univarsal}, which asserts the existence and uniqueness of the $^*$-algebra $\Ll_R(G,E) $ of the triple $(G,E,\varphi)$.

\begin{defn}\label{def.LGE}
Let $(G,E,\varphi)$ be as in Notation~\ref{note}. Let $R$ be a unital commutative $^*$-ring. A \emph{$(G,E)$-family} in a $^*$-algebra $A$ over $R$ is a set
$$\{P_{v,f} : v\in E^0, f\in G\}\cup \{S_{e,g} : e\in E^1, g\in G\}\subseteq A$$
such that
\begin{enumerate}[label=(\alph*)]
\item\label{GET.a} $\{P_{v,e_G} : v\in E^0\}\cup \{S_{e,e_G} : e\in E^1\}$ is an $E$-family in $A$,
\item\label{GET.b} $(P_{v,f})^*=P_{f^{-1}\act{}v,f^{-1}}$,
\item\label{GET.c} $P_{v,f}P_{w,h}=\delta_{v,f\act{}w}P_{v,fh}$
\item\label{GET.d} $P_{v,f}S_{e,g}=\delta_{v,r(f\act{}e)}S_{f\act{}e,\varphi(f, e)g}$, and
\item\label{GET.e} $S_{e,g}P_{v,f}=\delta_{g\act{}v,s(e)}S_{e,gf}$.
\end{enumerate}
We shall often abbreviate a $(G,E)$-family as $\{P_{v,f}, S_{e,g}\}$.
\end{defn}

\begin{thm}[Universal Property]
\label{thm.univarsal}
Let $(G,E,\varphi)$ be as in Notation~\ref{note}. Let $R$ be a unital commutative $^*$-ring. Then there is a $^*$-algebra $\Ll_R(G,E)$ over $R$ generated by a $(G,E)$-family $\{p_{v,f}, s_{e,g}\}$ with the following universal property: whenever $\{P_{v,f}, S_{e,g}\}$ is a $(G,E)$-family in a $^*$-algebra $A$ over $R$, there is a unique $^*$-homomorphism $\pi_{G,E}\colon \Ll_R(G,E) \to A$ such that
\begin{eqnarray*}
\pi_{G,E}(p_{v,f})=P_{v,f}, \ \ \text{and}\ \ \ \pi_{G,E}(s_{e,g})=S_{e,g}
\end{eqnarray*}
for all $v\in E^0$, $e\in E^1$ and $f,g\in G$.
\end{thm}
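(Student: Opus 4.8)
The plan is to construct $\Ll_R(G,E)$ as a universal object in the standard way: first build a free $^*$-algebra on the given generators, then quotient by the ideal encoding the $(G,E)$-family relations (a)--(e), and finally verify that the quotient carries the claimed universal property. The uniqueness half of the statement is automatic once existence is established, since any two universal objects admit mutually inverse comparison $^*$-homomorphisms induced by the generating families. So the real content is existence, and the heart of that is showing the generating family in the quotient is itself a nontrivial $(G,E)$-family (i.e. the generators do not collapse to zero or coincide unexpectedly).

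\medskip

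\noindent\textbf{Construction.} First I would form the free associative $R$-algebra $F$ on the symbol set $\{p_{v,f}, s_{e,g}, s_{e,g}^* : v\in E^0, e\in E^1, f,g\in G\}$, adjoining a unit. I would extend the assignment $x\mapsto x^*$ (sending $p_{v,f}\mapsto p_{f^{-1}\act v,f^{-1}}$, $s_{e,g}\mapsto s_{e,g}^*$, and $r\mapsto r^*$ for $r\in R$) to a conjugate-linear anti-multiplicative involution on $F$ in the usual way, so that $F$ becomes a $^*$-algebra over $R$. Next, let $I\subseteq F$ be the two-sided $^*$-ideal generated by all elements expressing the failure of relations \ref{GET.a}--\ref{GET.e}: the Leavitt relations \eqref{ref11} and \eqref{ref12} for the $e_G$-indexed generators, together with $p_{v,f}p_{w,h}-\delta_{v,f\act w}p_{v,fh}$, and the analogous differences for (b), (d), (e). Because the defining relations are closed under the involution (relation (b) makes the adjoint of each relator again a relator), $I$ is a $^*$-ideal, so $\Ll_R(G,E)\mydef F/I$ inherits the involution and is a $^*$-algebra over $R$. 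Writing $p_{v,f}, s_{e,g}$ for the images of the generators, relations \ref{GET.a}--\ref{GET.e} hold in the quotient by construction, so $\{p_{v,f}, s_{e,g}\}$ is a $(G,E)$-family generating $\Ll_R(G,E)$.

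\medskip

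\noindent\textbf{Universal property.} Given any $(G,E)$-family $\{P_{v,f}, S_{e,g}\}$ in a $^*$-algebra $A$, the universal property of the free $^*$-algebra $F$ yields a unique unital $^*$-homomorphism $\tilde\pi\colon F\to A$ sending each generator to the corresponding capital letter. Since $\{P_{v,f}, S_{e,g}\}$ satisfies \ref{GET.a}--\ref{GET.e}, every relator lies in $\ker\tilde\pi$, so $I\subseteq\ker\tilde\pi$ and $\tilde\pi$ descends to a $^*$-homomorphism $\pi_{G,E}\colon\Ll_R(G,E)\to A$ with the required values on generators. Uniqueness of $\pi_{G,E}$ is immediate because the $p_{v,f}, s_{e,g}$ generate $\Ll_R(G,E)$ as a $^*$-algebra, so any two $^*$-homomorphisms agreeing on them agree everywhere.

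\medskip

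\noindent\textbf{Main obstacle.} The delicate point is not the formal universal-property bookkeeping but ensuring the construction is genuinely well posed and nondegenerate. Concretely, I must check that the involution is consistently defined despite the overlap between $p$ and $p^*$ forced by relation (b) (one should not adjoin independent symbols $p_{v,f}^*$, since (b) dictates $p_{v,f}^* = p_{f^{-1}\act v,f^{-1}}$); this requires verifying that $x\mapsto x^*$ is an involution of order two on the generating set before extending it to $F$. The more serious issue, and the one I expect to require real work elsewhere in the paper, is proving that $\Ll_R(G,E)$ is nontrivial --- that the generators are nonzero and satisfy no hidden relations. The universal construction alone does not guarantee this; one typically establishes it by exhibiting a concrete faithful $(G,E)$-family, for instance a representation on a module built from paths in $E^*$, which I anticipate is deferred to the grading results (Lemma~\ref{lem.grading} and Theorem~A) where a canonical $\ZZ$-grading and a spanning set for $\Ll_R(G,E)$ are obtained and used to separate the generators.
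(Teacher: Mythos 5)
Your construction is, in outline, the same as the paper's: form a free $^*$-algebra on formal generators, quotient by the $^*$-ideal encoding relations \ref{GET.a}--\ref{GET.e}, and obtain the universal property by factoring the induced map on the free algebra through the quotient, with uniqueness coming from the fact that the family generates. Your one structural variant --- building relation \ref{GET.b} into the involution itself, so that $(p_{v,f})^* \mydef p_{f^{-1}\act{}v,f^{-1}}$ on generators, rather than adjoining independent adjoint symbols $(P_{v,f})^*$ and imposing \ref{GET.b} as a relator as the paper does --- is legitimate and arguably slicker, and your check that the assignment squares to the identity on generators is exactly the verification this variant requires.

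There is, however, one step that genuinely fails: adjoining a unit to the free algebra. The theorem quantifies over arbitrary $^*$-algebras $A$ over $R$, not unital ones, and $\Ll_R(G,E)$ itself is unital only when $E^0$ is finite (Remark~\ref{rem.nonzero}). Nothing in the relations identifies the adjoined $1$ with anything --- in particular they do not force $1=\sum_{v\in E^0}p_{v,e_G}$ even for finite $E^0$ --- so your quotient is essentially the unitization of the intended algebra. Three things then break: the quotient is not generated by the $(G,E)$-family, as the theorem asserts; your ``unique unital $^*$-homomorphism $\tilde\pi\colon F\to A$'' need not exist when $A$ is nonunital (take $A$ to be the correct nonunital $\Ll_R(G,E)$ for infinite $E^0$: since the family generates $A$, the image of $1$ would have to be a unit for $A$); and uniqueness of $\pi_{G,E}$ can fail, since non-unital $^*$-homomorphisms may send the adjoined $1$ to different idempotents. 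The repair is exactly what the paper does: take the free $R$-module on \emph{nonempty} words in the alphabet (a nonunital free $^*$-algebra) and quotient by the ideal generated by the relators and their adjoints. Finally, your closing remark misattributes where nondegeneracy is proved: the paper separates the generators not via the grading results but by exhibiting a concrete $(G,E)$-family of indicator functions in the Steinberg algebra $A_R(\Gg_{\text{tight}}(\Ss_{G,E}))$ (Proposition~\ref{nonzero}); you are right, though, that nonvanishing is no part of the present theorem and is legitimately deferred.
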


We call the $^*$-algebra $\Ll_R(G,E)$ of Theorem~\ref{thm.univarsal} the \emph{Exel--Pardo $^*$-algebra of $(G,E)$}, and we call $\{p_{v,f}, s_{e,g}\}$ the \emph{universal $(G,E)$-family}.

\begin{proof}[Proof of Theorem~\ref{thm.univarsal}]
Let
$$X\mydef \{ P_{v,f}, S_{e,g}, (P_{v,f})^*, (S_{e,g})^*: v\in E^0, e\in E^1, f,g\in G\}$$
be a set of formal symbols and $Y\mydef w(X)$ the set of all finite words in the alphabet $X$. Let $\FF_R(Y)$ be the free $R$-module generated by $Y$, that is $\FF_R(Y)$ is the set of formal sums $\sum_{y\in Y}r_y y$ in which all but finitely many coefficients $r_y\in R$ are zero. We equip $\FF_R(Y)$ with the multiplication
\begin{align*}
\Big(\sum_{x\in Y}r_x x\Big)\Big(\sum_{y\in Y}s_y y)\mydef \sum_{z\in Y}\sum_{\{x,y\in Y : \ xy=z\}}r_xs_yz.
\end{align*}
We define $((P_{v,f})^*)^*=P_{v,f}$ and $((S_{e,g})^*)^*=S_{e,g}$. For $x=x_1x_2\dots x_n\in Y$,  we define $x^*\mydef x_n^*x_{n-1}^*\dots x_1^*$. We then define $^*\colon \FF_R(Y)\to \FF_R(Y)$ by
\begin{align*}
\Big(\sum_{x\in Y}r_x x\Big)^*\mydef\sum_{x\in Y}r_x^* x^*.
\end{align*}
This makes $\FF_R(Y)$ it into a $^*$-algebra over $R$.

Let $I$ be the two-sided ideal of $\FF_R(Y)$ generated be the union of the following nine sets and their set adjoints:
\begin{align}
   \label{relations2}
   \begin{split}
     &\{P_{v,e_G}-P_{v,e_G}^*, \ P_{v,e_G}^2-P_{v,e_G}: v\in E^0\},\\
&\{S_{e,e_G}S_{e,e_G}^*S_{e,e_G}-S_{e,e_G} : e\in E^1\},\\
&\{S_{e,e_G}^*S_{e,e_G}-P_{s(e),e_G} : e\in E^1\},\\
&\{P_{v,e_G} - \sum_{e\in vE^1} S_{e,e_G}S_{e,e_G}^*:  \ v\in r(E^1)\},\\
&\{  (S_{e,e_G}S_{e,e_G}^*)(S_{f,e_G}S_{f,e_G}^*) :  v\in r(E^1),\, e,f\in vE^1, e\neq f \},\\
&\{(P_{v,f})^*-P_{f^{-1}\act{}v,f^{-1}} : v\in E^0, f\in G\},\\
&\{P_{v,f}P_{w,h}-\delta_{v,f\act{}w}P_{v,fh} : v,w\in E^0, f,h\in G\},\\
&\{P_{v,f}S_{e,g}-\delta_{v,r(f\act{}e)}S_{f\act{}e,\varphi(f, e)g}: v\in E^0, e\in E^1, f,g\in G\},\\
&\{S_{e,g}P_{v,f}-\delta_{g\act{}v,s(e)}S_{e,gf}: v\in E^0, e\in E^1, f,g\in G\}.
   \end{split}
\end{align}

We now define $\Ll_R(G,E)\mydef \FF_R(Y)/I$ and let $\{p_{v,f}, s_{e,g}\}$ be the image of $\{P_{v,f}, S_{e,g}\}$ via the  quotient map $q\colon  \FF_R(Y) \to \Ll_R(G,E)$. By construction the collection $\{p_{v,f}, s_{e,g}\}$ is a $(G,E)$-family in $\Ll_R(G,E)$.

Now let $\{\tilde P_{v,f}, \tilde S_{e,g}\}$ be any $(G,E)$-family in a $^*$-algebra $A$ over $R$. Define $\phi\colon X \to A$ by
$\phi(P_{v,f})=\tilde P_{v,f}$,
$\phi(S_{e,g})=\tilde S_{e,g}$,
$\phi((P_{v,f})^*)=(\tilde P_{v,f})^*$, and
$\phi((S_{e,g})^*)=(\tilde S_{e,g})^*$.
Extend $\phi$ to a map also denoted $\phi\colon Y \to A$ via $\phi(x_1x_2\dots x_n)\mydef \phi(x_1)\phi(x_2)\dots \phi(x_n)$ for  $x_i\in X$. Recall, we may identify $Y$ with a subset of $\FF_R(Y)$ via $y\mapsto 1y$. Now, by the universal property of the free $R$-module $\FF_R(Y)$, there exists a unique $R$-module homomorphism $\Phi\colon \FF_R(Y) \to A$ extending $\phi$. Since $\phi$ is compatible with $^*$, $\Phi$ is a $^*$-homomorphism. Since $\Phi(I)=0$, there is a $^*$-homomorphism $pi_{G,E}\colon \Ll_R(G,E) \to A$ such that $\pi_{G,E}(x+I)=\Phi(x)$ for all $x\in \FF_R(Y)$. For each $v\in E^0, f\in G$ we have
\[\pi_{G,E}(p_{v,f})=\pi_{G,E}(P_{v,f}+I)=\Phi(P_{v,f})=\phi(P_{v,f})=\tilde P_{v,f}.\]
Similarly, for each $e\in E^1,g\in G$, we have $\pi_{G,E}(s_{e,g})=\tilde S_{e,g}$.
\end{proof}

\begin{remark}
\label{rem.nonzero}
We note $\Ll_R(G,E)$ satisfies the following:
\begin{enumerate}
\item When $E^0$ is finite, $\Ll_R(G,E)$ is unital with unit $\sum_{v\in E^0}p_{v,e_G}$.
\item We will show in Proposition~\ref{nonzero} that the generators $p_{v,f}$, $s_{e,g}$ of $\Ll_R(G,E)$ are all nonzero.
\end{enumerate}
\end{remark}

In the following subsection we consider the triples $(G,E,\varphi)$ of Remark~\ref{3cases} and their associated algebras as illustrated on Figure~\ref{fbuha}.

\subsection{The unital case}
\label{The unital case}
For finite graphs $E$, algebraic versions of the Exel--Pardo $C^*$-algebras $\Oo_{G,E}$ were introduced in \cite{ClaExePar}. Here we show that our definition yields the same algebras as those defined in  \cite{ClaExePar}.

First we recall some definitions. Let $A$ be a unital $^*$-algebra. A \emph{unitary representation} of a discrete group $G$ on $A$ corresponds to a collection $\{u^g : g\in G\}$ of unitaries in $A$ satisfying $u^gu^h=u^{gh}$ (for all $g,h\in G$). It follows that $u^{e_G}=1$ and $(u^g)^*=u^{(g^{-1})}$.

Let $(G,E,\varphi)$ be as in Notation~\ref{note} and suppose that $E$ is finite. Let $R$ be a unital commutative ring. In \cite[Definition~6.2]{ClaExePar}, $\Oo_{(G,E)}^\text{alg}(R)$ is defined to be the universal $^*$-algebra over $R$ with the following generators and relations:
\begin{enumerate}
\item Generators:
$\{p_v : v\in E^0\}\cup\{s_e : e\in E^1\} \cup \{u^g : g\in G\}.$
\item Relations:
\begin{enumerate}
\item \label{first} $\{p_v : v\in E^0\}\cup\{s_e : e\in E^1\}$ is an $E$-family.
\item The map $u:G\rightarrow \Oo_{(G,E)}^\text{alg}(R)$ defined by the rule $g\mapsto u^g$ is a unitary representation of $G$.
\item $u^gs_e=s_{g\act{}e}u^{\varphi(g,e)}$ for every $g\in G, e\in E^1$.
\item $u^gp_v=p_{g\act{}v}u^g$ for every $g\in G, v\in E^0$.
\item \label{last} $\sum_{v\in E^0}p_{v}=u^{e_G}$.
\end{enumerate}
\end{enumerate}

\begin{remark}
\label{Cla.remark}
The last relation in the definition of $\Oo_{(G,E)}^\text{alg}(R)$ does not explicitly appear in \cite{ClaExePar}, but was certainly intended. We need this property at the end of the proof of Proposition~\ref{prop.unital}.
\end{remark}

\begin{prop}\label{prop.unital}
Let $(G,E,\varphi)$ be as in Notation~\ref{note} and suppose that $E$ is finite. Equip $R$ with the trivial involution. Then
 $$\Oo_{(G,E)}^\text{alg}(R)\cong \Ll_R(G,E).$$
\end{prop}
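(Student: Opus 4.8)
The plan is to invoke the two universal properties and build mutually inverse $^*$-homomorphisms
$$\rho\colon \Ll_R(G,E)\to \Oo_{(G,E)}^\text{alg}(R)\qquad\text{and}\qquad \psi\colon \Oo_{(G,E)}^\text{alg}(R)\to \Ll_R(G,E).$$
Since $E$ is finite, both algebras are unital; in particular the unit of $\Ll_R(G,E)$ is $\sum_{v\in E^0}p_{v,e_G}$ by Remark~\ref{rem.nonzero}(1), and this finiteness of $E^0$ is exactly what lets me form the finite sums appearing below.

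To construct $\rho$, I would exhibit a $(G,E)$-family inside $\Oo_{(G,E)}^\text{alg}(R)$ by setting
$$P_{v,f}\mydef p_vu^f\qquad\text{and}\qquad S_{e,g}\mydef s_eu^g.$$
The axioms of Definition~\ref{def.LGE} then reduce to the relations defining $\Oo_{(G,E)}^\text{alg}(R)$ together with the standard $E$-family identities $p_vs_e=\delta_{v,r(e)}s_e$ and $s_ep_v=\delta_{v,s(e)}s_e$ (both derived from Definition~\ref{def.E.family}). For instance, \ref{GET.b} follows from unitarity of $u^f$ and relation \ref{first}; \ref{GET.c} from orthogonality of the $p_v$ together with $u^fu^h=u^{fh}$; \ref{GET.d} and \ref{GET.e} from $u^gs_e=s_{g\act{}e}u^{\varphi(g,e)}$, from $u^gp_v=p_{g\act{}v}u^g$, and from the two $E$-family identities; and \ref{GET.a} is immediate since $u^{e_G}=1$. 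The universal property of Theorem~\ref{thm.univarsal} then yields $\rho$ with $\rho(p_{v,f})=p_vu^f$ and $\rho(s_{e,g})=s_eu^g$.

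To construct $\psi$, I would set
$$p_v\mydef p_{v,e_G},\qquad s_e\mydef s_{e,e_G},\qquad u^g\mydef\sum_{v\in E^0}p_{v,g},$$
the last sum being finite because $E^0$ is. Relation \ref{first} is precisely axiom \ref{GET.a}, and relation \ref{last} holds by the description of the unit. That $g\mapsto u^g$ is a unitary representation follows from \ref{GET.c} (using that $w\mapsto g\act{}w$ permutes $E^0$) and \ref{GET.b}, with $u^{e_G}=\sum_vp_{v,e_G}=1$; the covariance relations $u^gs_e=s_{g\act{}e}u^{\varphi(g,e)}$ and $u^gp_v=p_{g\act{}v}u^g$ drop out of \ref{GET.c}, \ref{GET.d} and \ref{GET.e} after the defining sums collapse against the Kronecker deltas. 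The universal property of $\Oo_{(G,E)}^\text{alg}(R)$ then produces $\psi$.

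Finally I would verify $\psi\circ\rho=\id$ and $\rho\circ\psi=\id$ on generators, which suffices as these generate. Both checks again collapse finite sums using the family relations: for example $\psi(\rho(p_{v,f}))=p_{v,e_G}\sum_wp_{w,f}=p_{v,f}$, $\psi(\rho(s_{e,g}))=s_{e,e_G}\sum_wp_{w,g}=s_{e,g}$, and $\rho(\psi(u^g))=\sum_vp_vu^g=\big(\textstyle\sum_vp_v\big)u^g=u^{e_G}u^g=u^g$. I expect the only genuinely delicate point to be this last identity: confirming $\rho\circ\psi=\id$ on $u^g$ requires relation \ref{last}, namely $\sum_{v\in E^0}p_v=u^{e_G}$, which is exactly the relation flagged in Remark~\ref{Cla.remark} as implicit in \cite{ClaExePar}. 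Everything else is bookkeeping with the Kronecker deltas, the essential structural inputs being the finiteness of $E^0$ (so that $u^g$ and the unit are honest finite sums) and the two $E$-family identities recalled above.
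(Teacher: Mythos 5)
Your proposal is correct and takes essentially the same route as the paper's own proof: you use the identical $(G,E)$-family $P_{v,f}=p_vu^f$, $S_{e,g}=s_eu^g$ in one direction and the identical assignment $P_v=p_{v,e_G}$, $S_e=s_{e,e_G}$, $U^g=\sum_{v\in E^0}p_{v,g}$ in the other, invoking the two universal properties and checking mutual inverses on generators. You also correctly isolate the one delicate step, that $\rho\circ\psi(u^g)=u^g$ needs the relation $\sum_{v\in E^0}p_v=u^{e_G}$, which is exactly the point the paper makes in Remark~\ref{Cla.remark}.
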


\begin{proof}
We first build a homomorphism $\pi_1\colon \Ll_R(G,E)\to \Oo_{(G,E)}^\text{alg}(R)$ by invoking the universal property of $\Ll_R(G,E)$. Let $\{p_v, s_e, u^g\}$ denote the generators for $\Oo_{(G,E)}^\text{alg}(R)$. For each $v\in E^0$, $e\in E^1$ and $f,g\in G$ define
$$P_{v,f}\mydef p_vu^f \quad\text{and}\quad S_{e,g}\mydef s_eu^g.$$
Routine calculations using that $s_e=p_{r(e)}s_{e}p_{s(e)}$ show that $\{P_{v,f}, S_{e,g}\}$ is a $(G,E)$-family.
The universal property of $\Ll_R(G,E)$ now yields a $^*$-algebra homomorphism $$\pi_1\colon \Ll_R(G,E)\to \Oo_{(G,E)}^\text{alg}(R)$$ such that $\pi_{1}(p_{v,f})=P_{v,f}$ and $\pi_{1}(s_{e,g})=S_{e,g}$.

To construct an inverse for $\pi_1$ we will use the universal property of $\Oo_{(G,E)}^\text{alg}(R)$. Using the generators $\{p_{v,f}, s_{e,g}\}$ for $\Ll_R(G,E)$, for each $v\in E^0$, $e\in E^1$ and $g\in G$ define
$$P_{v}\mydef p_{v,e_G}, \ \ \, S_{e}\mydef s_{e,e_G}, \ \ \ U^g\mydef \sum_{v\in E^0} p_{v,g}.$$
Again, routine calculations using that $e_G\act{}v=v$ show that $\{P_v, S_e, U^g\}$ satisfy the relations \eqref{first}--\eqref{last} in the definition of $\Oo_{(G,E)}^\text{alg}(R)$.
The universal property of $\Oo_{(G,E)}^\text{alg}(R)$ provides a $^*$-algebra homomorphism $$\pi_2\colon \Oo_{(G,E)}^\text{alg}(R)\to  \Ll_R(G,E)$$ such that $\pi_{2}(p_{v})=P_{v}$, $\pi_{2}(s_{e})=S_{e}$ and $\pi_{2}(u^g)=U^g$. By computing $\pi_i\circ \pi_j$ ($i\neq j$) on generators we see that $\pi_1$ is an inverse for $\pi_2$; for the generator $u^g$ in $\Oo_{(G,E)}^\text{alg}(R)$ we use \eqref{last} from the definition of $\Oo_{(G,E)}^\text{alg}(R)$ to see that
$$\pi_1\circ \pi_2(u^g)=\sum_{v\in E^0} p_vu^g=u^g.\qedhere$$
\end{proof}


\subsection{The vertex-trivial case} In \cite{MR2400990} Katsura introduced $C^*$-algebras $\Oo_{A,B}$ which we call Katsura $C^*$-algebras. Here we consider an algebraic analogue, denoted $\Oo_{A,B}^\text{alg}(R)$, and prove that all such $^*$-algebras are Exel--Pardo $^*$-algebras using the translation of the matrices $A, B$ into an action of $\ZZ$ on a graph discovered by Exel and Pardo in \cite{ExePar} (see Definition~\ref{def.graph.for.AB}).

We recall the relevant notation needed to introduce Katsura $C^*$-algebras $\Oo_{A,B}$. Fix $N\in \NN\cup \{\infty\}$. Let $ I\mydef\{1,2, \dots, N\}$ for $N$ finite and $I\mydef \NN$ otherwise. With $A_{ij}$ denoting the $ij$-entry of an $I\times I$ nonnegative integer matrix $A$ define $\Omega_A\mydef \{(i,j)\in I\times I: A_{ij}> 0\}$, and $\Omega_A(i)\mydef \{j: (i,j)\in \Omega_A\}$ for $i\in I$.

\begin{defn}[{\cite[Definition~2.3]{ExePar},\cite[Definition~2.2]{MR2400990}}]\label{def.AB}
Fix $N\in \NN\cup \{\infty\}$ and row-finite matrixes $A, B\in M_N(\ZZ)$ such that $A$ has nonnegative entries and no zero rows. Let $ I\mydef\{1,2, \dots, N\}$ for $N$ finite and $I\mydef \NN$ otherwise. The  $C^*$-algebra $\Oo_{A,B}$ is the universal $C^*$-algebra generated by mutually orthogonal projections ${(q_k)}_{k\in I}$, partial unitaries ${(u_k)}_{k\in I}$ with $q_k=u_ku_k^*$, and partial isometries $(s_{ijn})_{(i,j)\in\Omega_A, n\in \ZZ}$ such that
\begin{equation}\label{eq.dag}\tag{\dag}
  \begin{array}{lll}
    \text{(i)} & s_{ijn}u_j=s_{ij(n+A_{ij})}, u_is_{ijn}=s_{ij(n+B_{ij})} \ \text{for} \, (i,j)\in\Omega_A, n\in \ZZ,\\
    \text{(ii)} & s_{ijn}^*s_{ijn}=q_j \ \text{for} \ (i,j)\in\Omega_A, n\in \ZZ,\, \text{and}\\
    \text{(iii)} & q_i=\sum_{j\in \Omega_A(i), 1\leq n \leq A_{ij}} s_{ijn}s_{ijn}^*\, (i\in I).
  \end{array}
\end{equation}
\end{defn}

\begin{defn}\label{rem.def.LAB}
Similarly to the construction of $\Ll_R(G,E)$, it makes perfect sense to consider the universal $^*$-algebra over a unital commutative $^*$-ring $R$ with the same generators and relations as those for $\Oo_{A,B}$ but with the additional relations that if $j\neq j'$ then $s_{ijn}^*s_{ij'n}=0$ for all $n$ and that if $1\leq n<n'\leq A_{ij}$ then $s_{ijn}^*s_{ijn'}=0$. These relations follow automatically from the others in a $C^*$-algebra, but must be imposed separately in an abstract $^*$-algebra. We denote this universal $^*$-algebra by $\Oo_{A,B}^\text{alg}(R)$.
\end{defn}

\begin{defn}[{\cite[Remark 18.3]{MR3581326}}]
\label{def.graph.for.AB}
Fix $N,A, B$ and $I$ as in Definition~\ref{def.AB}.
Let $E$ be the directed graph with vertices $\{v_i: i\in I\}$ and
edges $\{e_{ijn}: i,j\in I, 0\leq n \leq A_{ij}-1\}$ with $r(e_{ijn})=v_i$ and $s(e_{ijn})=v_j$.
By construction $E$ is row-finite and has no sources.
%
%

It is straightforward to check that we can define an action $\sigma$ of $\ZZ$ on $E^1$ and a one-cocycle $\varphi \colon {\ZZ \times E^1}\to \ZZ$ as follows: For any $i,j\in \Omega_A$, $n\in \{0,\dots, A_{ij}-1\}$ and $m\in \ZZ$ there are a unique $\hat n \in \{0,\dots, A_{ij}-1\}$ and a unique $\hat k\in \ZZ$ such that $mB_{ij}+n=\hat k A_{ij} + \hat n$. We define $\sigma_m(e_{ijn}) \mydef e_{ij\hat n}$ and $\varphi(m,e_{ijn})\mydef \hat k$. Since $\sigma_m$ permutes parallel edges, $\sigma$ extends to an action on $E$ such that $\sigma_m(v)=v$ for all $m\in\ZZ$ and $v\in E^0$. We call  $(\ZZ,E,\varphi)$ the \emph{Katsura triple} associated to $A,B$.
\end{defn}


\begin{prop}\label{prop.AB}
Take $N\in \NN\cup \{\infty\}$, and let $A, B\in M_N(\ZZ)$ be as in Definition~\ref{def.AB}. Let $(\ZZ,E,\varphi)$ be the Katsura triple associated to $A,B$. Then
 $$ \Oo_{A,B}^\text{alg}(R)\cong \Ll_R(\ZZ,E),$$
 as $^*$-algebras over $R$.
\end{prop}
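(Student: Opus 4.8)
The plan is to establish the isomorphism $\Oo_{A,B}^\text{alg}(R)\cong \Ll_R(\ZZ,E)$ by constructing mutually inverse $^*$-homomorphisms using the two universal properties, exactly as in the proof of Proposition~\ref{prop.unital}. The group here is $G=\ZZ$, written additively, so a group element $f\in\ZZ$ acts trivially on vertices (since $\sigma_m(v)=v$ by Definition~\ref{def.graph.for.AB}) and the cocycle $\varphi$ records the "remainder" integer $\hat k$. The central idea is that the generator $s_{ijn}$ of $\Oo_{A,B}^\text{alg}(R)$ should correspond to a product of a universal edge-generator $s_{e_{ij\bar n},\, \bar m}$ of $\Ll_R(\ZZ,E)$ for the appropriate edge and group element, and the partial unitary $u_i$ should correspond to $p_{v_i,1}$ (the $f=1$ component of the unitary representation at the vertex $v_i$), while the projection $q_i$ corresponds to $p_{v_i,0}=p_{v_i,e_G}$.

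\emph{First direction.} I would build $\pi\colon \Oo_{A,B}^\text{alg}(R)\to \Ll_R(\ZZ,E)$ via the universal property of $\Oo_{A,B}^\text{alg}(R)$ (Definition~\ref{rem.def.LAB}). Set $Q_i\mydef p_{v_i,e_G}$, $U_i\mydef p_{v_i,1}$, and for $(i,j)\in\Omega_A$ and $n\in\ZZ$ define $S_{ijn}$ by first writing $n=\hat kA_{ij}+\hat n$ with $0\le \hat n\le A_{ij}-1$ and then setting $S_{ijn}\mydef s_{e_{ij\hat n},\,\hat k}$ — this uses exactly the normalisation appearing in the definition of $\sigma$ and $\varphi$. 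I would then verify that $\{Q_i,U_i,S_{ijn}\}$ satisfies the relations \eqref{eq.dag}(i)--(iii) together with the extra orthogonality relations of Definition~\ref{rem.def.LAB}. Relations (ii) and (iii) follow from the $E$-family axioms \eqref{ref11}--\eqref{ref12} applied to the edges $\{e_{ijn}\}$ out of $v_i$, and the fact that $Q_i=p_{v_i,e_G}$ is a projection. The key computation is (i): using relation~\ref{GET.e} of Definition~\ref{def.LGE}, $S_{ijn}U_j=s_{e_{ij\hat n},\hat k}\,p_{v_j,1}=s_{e_{ij\hat n},\hat k+1}$, and one checks $\hat k+1$ is precisely the remainder-coefficient for $n+A_{ij}$, giving $S_{ij(n+A_{ij})}$; the relation $U_iS_{ijn}=S_{ij(n+B_{ij})}$ follows similarly from relation~\ref{GET.d}, since $\varphi(1,e_{ij\hat n})$ encodes adding $B_{ij}$ (modulo the $A_{ij}$-reduction) by the defining equation $mB_{ij}+n=\hat kA_{ij}+\hat n$.

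\emph{Second direction and inverse.} Conversely I would build $\rho\colon \Ll_R(\ZZ,E)\to \Oo_{A,B}^\text{alg}(R)$ via the universal property of $\Ll_R(\ZZ,E)$ (Theorem~\ref{thm.univarsal}), setting $P_{v_i,f}\mydef q_iu_i^{\,f}$ and, for the edge $e_{ij\hat n}$ and $\hat k\in\ZZ$, defining $S_{e_{ij\hat n},\hat k}\mydef s_{ij\,n}$ where $n=\hat kA_{ij}+\hat n$; one then checks that $\{P_{v,f},S_{e,g}\}$ is a $(\ZZ,E)$-family, which amounts to re-deriving axioms \ref{GET.a}--\ref{GET.e} from the relations \eqref{eq.dag} (note that since $G=\ZZ$ acts trivially on $E^0$, the Kronecker deltas in \ref{GET.c}--\ref{GET.e} simplify considerably). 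Finally I would check $\rho\circ\pi=\id$ and $\pi\circ\rho=\id$ on generators; the main point to handle carefully is that the two normalisations by $n=\hat kA_{ij}+\hat n$ are genuinely inverse to one another, and that $\sum_i q_i$ behaves as the relevant unit in the infinite case (where neither algebra is unital) — so I expect the principal obstacle to be bookkeeping the division-with-remainder identification of the index $n\in\ZZ$ with the pair $(\hat n,\hat k)$ consistently across both directions, rather than any conceptual difficulty.
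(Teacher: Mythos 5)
Your proposal is correct and takes essentially the same route as the paper's proof: both directions are built from the two universal properties with the identical generator assignments $Q_i=p_{v_i,0}$, $U_i=p_{v_i,1}$ and $S_{ijn}=s_{e_{ij\hat n},\hat k}$ for $n=\hat kA_{ij}+\hat n$ (the paper writes the reverse direction as $P_{v_k,m}=u_k^m$ and $S_{e_{ijn},l}=s_{ijn}u_j^l$, which coincides with your $q_iu_i^{f}$ and your division-with-remainder prescription, since $q_iu_i^f=u_i^f$ and $s_{ij\hat n}u_j^{\hat k}=s_{ijn}$ by \eqref{eq.dag}(i)), and the key verification of \eqref{eq.dag}(i) via relations \ref{GET.d}--\ref{GET.e} matches the paper's computation. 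Your explicit check of the extra orthogonality relations of Definition~\ref{rem.def.LAB} is a point the paper leaves as routine, but there is no difference in substance.
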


\begin{proof}
We first use the universal property of $\Ll_R(\ZZ,E)$ to obtain a homomorphism $\pi_1\colon \Ll_R(\ZZ,E)\to \Oo_{A,B}^\text{alg}(R)$. For this let $\{q_k, u_k, s_{ijn} \}$ denote the generators for $\Oo_{A,B}^\text{alg}(R)$. Let $ I\mydef\{1,2, \dots, N\}$ for $N$ finite and $I\mydef \NN$ otherwise. For each $k\in I$, and each $m\in \ZZ$, set
\[
  u_k^m \mydef
  \begin{cases}
     (u_k^*)^{-m} & \text{if $m<0$,} \\
    q_k & \text{if $m=0$,}\\
        (u_k)^{m} & \text{if $m>0$.}
  \end{cases}
\]
For $v=v_k\in E^0$, $e=e_{ijn}\in E^1$ and $m,l\in \ZZ$ define
$$P_{v,m}\mydef u_k^m, \ \ \, S_{e,l}\mydef s_{ijn}u_j^l.$$
Routine calculations show that $\{P_{v,m}, S_{e,l}\}$ is a $(\ZZ,E)$-family.
%
So the universal property of $\Ll_R(\ZZ,E)$ provides a $^*$-algebra homomorphism $$\pi_1\colon \Ll_R(\ZZ,E)\to \Oo_{A,B}^\text{alg}(R)$$ such that $\pi_{1}(p_{v,m})=P_{v,m}$ and $\pi_{1}(s_{e,l})=S_{e,l}$.

To construct an inverse for $\pi_1$, let $\{p_{v,m}, s_{e,l}\}$ denote the generators for $\Ll_R(\ZZ,E)$. For each $(i,j)\in\Omega_A$ and $m\in \ZZ$, there exists unique elements $n\in \{0, \dots,  A_{ij}-1\}$ and $k\in \ZZ$ such that $m=n+kA_{ij}$. Define
$$Q_{k}\mydef p_{v_k,0}, \ \ \, S_{ijm}\mydef s_{e_{ijn},k}, \ \ \ U_k\mydef p_{v_k,1}.$$

It is routine to see that the $Q_k$ are mutually orthogonal projections, the $S_{ijm}$ are partial isometries and $U_k^*U_k=Q_k=U_kU_k^*$ for each $k\in I$. It remains to show that $\{Q_{k}, S_{ijm}, U_k\}$ satisfy \eqref{eq.dag}.

First we show \eqref{eq.dag}(i).
 Fix $(i,j)\in\Omega_A, m\in \ZZ$. Let $n\in \{0, \dots,  A_{ij}-1\}$ and $k\in \ZZ$ be the  elements such that $m=n+kA_{ij}$. We get the first equality of \eqref{eq.dag}(i) by
$$S_{ij(m+A_{ij})}=S_{ij(n+(k+1)A_{ij})}=s_{e_{ijn},k+1}=s_{e_{ijn},k}p_{v_j,1}=S_{ijm}U_j.$$

For the second equality in \eqref{eq.dag}(i) consider the same $(i,j)$ and $m$. Let $\hat n\in \{0, \dots,  A_{ij}-1\}$ and $\hat k\in \ZZ$ be the elements such that $B_{ij}+n=\hat k A_{ij} + \hat n$. By Definition~\ref{def.graph.for.AB}, $\sigma_1(e_{ijn}) =e_{ij\hat n}$ and $\varphi(1, e_{ijn})= \hat k$. So
\begin{align*}
S_{ij(m+B_{ij})}&=S_{ij(n+kA_{ij}+B_{ij})}=S_{ij((\hat k+k) A_{ij} + \hat n)}=s_{e_{ij\hat n},\hat k+k}.
\end{align*}
Hence
\begin{align*}
U_iS_{ijm}&=p_{v_i,1}s_{e_{ijn},k}=\delta_{v_i,r(1\act{}e_{ijn})}s_{1\act{}e_{ijn},\varphi(1, e_{ijn})+k}=S_{ij(m+B_{ij})}.
\end{align*}

To verify \eqref{eq.dag}(ii) let $m=n+kA_{ij}$ as above.  Then
$$S_{ijm}S_{ijm}^*=s_{e_{ijn},0}p_{v_j,k}p_{v_j,k}^*s_{e_{ijn},0}^*=s_{e_{ijn},0}s_{e_{ijn},0}^*=p_{s(e_{ijn}),0}=Q_j,$$
as required. Finally  \eqref{eq.dag}(iii) follows from the fact that  $\{p_{v,0}, s_{e,0}\}$ is an $E$-family in $\Ll_R(\ZZ,E)$.

The universal property of $\Oo_{A,B}^\text{alg}(R)$ provides a $^*$-algebra homomorphism $$\pi_2\colon \Oo_{A,B}^\text{alg}(R) \to  \Ll_R(\ZZ,E)$$ such that $\pi_{2}(q_k)=Q_k$, $\pi_{2}(u_k)=U_k$ and $\pi_{2}(s_{ijm})=S_{ijm}$. Direct computation on generators shows that  $\pi_1$ and $\pi_2$ are mutually inverse.
\end{proof}

\subsection{The trivial group case}
\label{triv.group}
As our  final example for this section we consider the case where the group $G=\{0\}$. When $G=\{0\}$ the $C^*$-algebra $\Oo_{G,E}$ is isomorphic to the graph $C^*$-algebra $C^*(E)$, see \cite{MR3581326, ExeParSta}. In the algebraic setting we show that for any row-finite graph $E$ with no sources, if $G=\{0\}$ then the Exel--Pardo $^*$-algebra $\Ll_R(G,E)$ is isomorphic to the Leavitt path algebra of $E$.

We start by introducing Leavitt path $R$-algebras, although we reverse the usual edge-direction convention to match the rest of the paper. Let $R$ be a unital commutative ring. Let $E$ be a row-finite graph. As in \cite[p.~161]{MR2310414}, the \emph{Leavitt path $R$-algebra} $L_R(E)$ of $E$ with coefficients in $R$ is the $R$-algebra generated by elements $\{p_v, x_e, y_e : v\in E^0, e\in E^1\}$ such that
\begin{align}
\begin{split}
&p_vp_{v'} = \delta_{v,v'}p_v \ \text{for all}\ v,v' \in E^0,\\
&p_{r(e)}x_e = x_ep_{s(e)} = x_e \ \text{for all}\ e\in E^1,\\
&p_{s(e)}y_e = y_ep_{r(e)} = y_e \ \text{for all}\ e\in E^1,\\
&y_ex_{e'} = \delta_{e,e'}p_{s(e)} \ \text{for all}\ e,e'\in E^1, \ \text{and}\\
&p_v =\sum_{\{e: r(e)=v\}}  x_ey_e \ \text{for all} \ v\in r(E^1).
\label{LER.relations}
\end{split}
\end{align}

As pointed out in \cite[p.~70]{MR3319981} this definition coincides with the one by Abrams and Aranda Pino in \cite[Definition~1.3]{MR2172342}.
Below we show that every Leavitt path algebra, regarded as a $^*$-algebra under the involution such that $(rx_e)^*=r^*y_e$, is an Exel--Pardo $^*$-algebra. For this we firstly confirm that the mentioned property defines an involution on $L_R(E)$.

\begin{lemma}
\label{into.star.alg}
Let $E$ be a row-finite graph with no sources and $R$ unital commutative $^*$-ring. Then there is a unique involution on $L_R(E)$ such that $(r x_e)^*=r^*y_e$ for all $r\in R$ and $e\in E^0$.
\end{lemma}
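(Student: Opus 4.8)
The plan is to construct the involution explicitly on generators and then verify it extends to a well-defined anti-multiplicative, conjugate-linear involution on all of $L_R(E)$. The key obstacle is that $L_R(E)$ is defined by generators and relations, so I cannot simply declare what $(rx_e)^*$ is and hope it respects the relations; I must check that the candidate involution is compatible with the defining relations \eqref{LER.relations}.

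First I would use the universal property of $L_R(E)$, which is built into its presentation as the $R$-algebra universal for the relations \eqref{LER.relations}. The standard trick is to pass to the opposite algebra. Consider the opposite ring $L_R(E)^\opp$, which has the same underlying $R$-module but reversed multiplication, and regard it as an algebra over $R$ twisted by the involution $r \mapsto r^*$ of $R$ (so that it becomes conjugate-linear rather than linear). I would then define a candidate set of generators inside $L_R(E)^\opp$: set $P_v \mydef p_v$, $X_e \mydef y_e$ and $Y_e \mydef x_e$ (swapping the roles of $x_e$ and $y_e$). The crux is to verify that $\{P_v, X_e, Y_e\}$ satisfies the relations \eqref{LER.relations} inside $L_R(E)^\opp$; because multiplication is reversed, each relation $ab = c$ in $L_R(E)$ becomes $ba = c$ in the opposite algebra, and the symmetric form of the relations in \eqref{LER.relations} (note how the range/source relations and the Cuntz--Krieger relation come in $x$-then-$y$ and $y$-then-$x$ pairs) is exactly what makes this work out.

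Granting that verification, the universal property supplies a unique $R$-algebra homomorphism (conjugate-linear over $R$ because of the twist) $\theta \colon L_R(E) \to L_R(E)^\opp$ sending $p_v \mapsto P_v$, $x_e \mapsto X_e = y_e$ and $y_e \mapsto Y_e = x_e$. Reinterpreting $\theta$ as a map $L_R(E) \to L_R(E)$, it is conjugate-linear and anti-multiplicative, and it sends $rx_e \mapsto r^* y_e$ as desired. It remains to check that $\theta$ is an involution, i.e. $\theta^2 = \id$. Since $\theta^2$ is an $R$-algebra endomorphism of $L_R(E)$ fixing the generators $p_v$, $x_e$, $y_e$, another appeal to the universal property forces $\theta^2 = \id$. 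This simultaneously gives existence and uniqueness: any involution satisfying $(rx_e)^* = r^*y_e$ must agree with $\theta$ on generators and hence everywhere.

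The main obstacle, as noted, is the relation-checking step in the opposite algebra; everything else is a formal consequence of the universal property. I would carry it out carefully for the two relations that are not manifestly symmetric, namely $p_{r(e)}x_e = x_e p_{s(e)} = x_e$ together with its companion $p_{s(e)}y_e = y_e p_{r(e)} = y_e$ (these interchange correctly under the swap $x_e \leftrightarrow y_e$ and reversal of multiplication), and the Cuntz--Krieger relation $p_v = \sum_{\{e : r(e) = v\}} x_e y_e$, which in the opposite algebra becomes $p_v = \sum y_e x_e$ read with reversed product, matching the image under the swap. Since $E$ is assumed row-finite, the sum is finite and no convergence issues arise.
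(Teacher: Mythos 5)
Your existence argument is correct and takes a genuinely different route from the paper's: the paper treats existence as routine, and its entire proof consists of the single observation that $(rp_v)^*=r^*p_v$ is forced by the last relation of \eqref{LER.relations} together with row-finiteness and the absence of sources. Your opposite-algebra construction --- twisting the $R$-action on $L_R(E)^\opp$ by $r\mapsto r^*$, checking that $(p_v, y_e, x_e)$ satisfies \eqref{LER.relations} there (your checks are right: the two range/source relations interchange under the swap, $Y_e\cdot_{\opp}X_{e'}=y_{e'}x_e=\delta_{e,e'}p_{s(e)}$, and the Cuntz--Krieger sum is preserved), and then deducing $\theta^2=\id$ because $\theta^2$ is an $R$-algebra endomorphism fixing the generators --- is a clean and complete way to obtain existence, arguably more careful than what the paper records.

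The uniqueness half, however, has a genuine gap, and it is exactly the point the paper's one-line proof addresses. You assert that any involution with $(rx_e)^*=r^*y_e$ ``must agree with $\theta$ on generators,'' but the hypothesis determines only $x_e^*=y_e$ and hence, by involutivity, $y_e^*=x_e$; it says nothing directly about $p_v^*$. Agreement on the vertex idempotents is not formal: it holds only because $E$ has no sources, so that every $v$ lies in $r(E^1)$ and the last relation of \eqref{LER.relations} forces
\[
p_v^* \;=\; \Big(\sum_{e\in vE^1} x_e y_e\Big)^{\!*} \;=\; \sum_{e\in vE^1} y_e^*\,x_e^* \;=\; \sum_{e\in vE^1} x_e y_e \;=\; p_v,
\]
with row-finiteness making the sum finite. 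Note that your proof never invokes the no-sources hypothesis, which is a warning sign: without it, uniqueness genuinely fails. For instance, for the graph with two vertices $v,w$ and no edges, $L_R(E)=Rp_v\oplus Rp_w$, and the map $rp_v\mapsto r^*p_w$, $rp_w\mapsto r^*p_v$ is an involution satisfying the (vacuous) condition on edges yet differing from $\theta$. Inserting the displayed computation into your uniqueness paragraph closes the gap and recovers precisely the content of the paper's proof.
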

\begin{proof}
Note that $(rp_v)^*=r^*p_v$ using the last equality of \eqref{LER.relations} and the fact that $E$ is row-finite with no sources.
\end{proof}

For the following proposition  we note that for the self-similar actions $(G,E, \sigma, \varphi)$ considered in this paper, if $G=\{0\}$ then necessarily $\varphi=0$ and $\sigma=\text{id}_E$.

\begin{prop}\label{triv.group.case}
Let $E$ be a row-finite graph with no sources, and consider the quadruple $(\{0\},E, \text{id}_E, 0)$ as in Notation~\ref{note}. Let $R$ be a unital commutative ring. Then there is an $R$-algebra isomorphism $\pi_1\colon L_R(E)\to  \Ll_R(\{0\},E)$ such that
 $$\pi_{1}(p_{v})=p_{v,0}, \ \ \ \pi_{1}(x_{e})=s_{e,0}, \text{and} \ \ \ \pi_{1}(y_{e})=s_{e,0}^*.$$
\end{prop}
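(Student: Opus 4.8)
The plan is to establish the isomorphism $\pi_1\colon L_R(E)\to \Ll_R(\{0\},E)$ by invoking both universal properties, exactly as in the proofs of Proposition~\ref{prop.unital} and Proposition~\ref{prop.AB}. When $G=\{0\}$ the indexing group collapses, so the $(G,E)$-family generators reduce to $p_{v,0}$ and $s_{e,0}$ (with $s_{e,0}^*$ determined by the involution), and the five relations \ref{GET.a}--\ref{GET.e} of Definition~\ref{def.LGE} simplify drastically: relations \ref{GET.b}--\ref{GET.c} just say the $p_{v,0}$ are mutually orthogonal projections, while \ref{GET.d}--\ref{GET.e} reduce to $p_{r(e),0}s_{e,0}=s_{e,0}=s_{e,0}p_{s(e),0}$ (using $\sigma=\id_E$ and $\varphi=0$), and \ref{GET.a} supplies the remaining $E$-family relations. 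So I expect the two algebras to be presented by essentially identical relations, with $x_e$ corresponding to $s_{e,0}$ and $y_e$ to $s_{e,0}^*$.

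First I would verify that the triple $\{P_v,X_e,Y_e\}\mydef\{p_{v,0},\,s_{e,0},\,s_{e,0}^*\}$ in $\Ll_R(\{0\},E)$ satisfies the defining relations \eqref{LER.relations} of the Leavitt path algebra. The first relation $p_{v,0}p_{v',0}=\delta_{v,v'}p_{v,0}$ follows since $\{p_{v,e_G}\}=\{p_{v,0}\}$ are mutually orthogonal projections by \ref{GET.a}. The two ``intertwining'' relations $p_{r(e),0}s_{e,0}=s_{e,0}=s_{e,0}p_{s(e),0}$ follow from the fact that $s_{e,e_G}=p_{r(e),e_G}s_{e,e_G}p_{s(e),e_G}$ holds for any $E$-family; taking adjoints gives the relations involving $y_e=s_{e,0}^*$. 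The relation $y_ex_{e'}=s_{e,0}^*s_{e',0}=\delta_{e,e'}p_{s(e),0}$ is precisely \eqref{ref11} from Definition~\ref{def.E.family}, and the final sum relation $p_{v,0}=\sum_{r(e)=v}x_ey_e$ is precisely \eqref{ref12}. Hence the universal property of $L_R(E)$ yields an $R$-algebra homomorphism $\pi_1$ with the stated values on generators.

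For the reverse direction I would check that $\{P_{v,0},S_{e,0}\}\mydef\{p_v,\,x_e\}$ (with $S_{e,0}^*\mydef y_e$) forms a $(\{0\},E)$-family in $L_R(E)$, so that the universal property of Theorem~\ref{thm.univarsal} produces a homomorphism $\pi_2\colon \Ll_R(\{0\},E)\to L_R(E)$. Axiom \ref{GET.a} is exactly the content of \eqref{LER.relations} together with Lemma~\ref{into.star.alg}, which guarantees that the involution sends $x_e$ to $y_e$ so that $\{p_v,x_e\}$ genuinely forms an $E$-family in the $^*$-algebra sense; axioms \ref{GET.b}--\ref{GET.e} become trivial identities because $G=\{0\}$ forces all the group indices to be $0$ and $\sigma=\id_E$, $\varphi=0$ eliminate the twisting. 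Finally, $\pi_1$ and $\pi_2$ are mutually inverse, since on the respective generating sets $\pi_2\circ\pi_1$ and $\pi_1\circ\pi_2$ are the identity by construction.

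I anticipate the only genuinely nontrivial point, rather than the routine relation-checking, to be the bookkeeping around the involution: one must confirm that the involution on $L_R(E)$ supplied by Lemma~\ref{into.star.alg} matches the adjoint structure on $\Ll_R(\{0\},E)$, i.e.\ that $\pi_1$ (a priori only an $R$-algebra map) is automatically compatible with $^*$ under the identification $y_e=x_e^*$. Since the proposition only claims an $R$-algebra isomorphism, strictly speaking this compatibility is not required for the statement, but it is what makes the two universal properties align cleanly; verifying it amounts to observing that both involutions are determined by $x_e\mapsto y_e$ and act identically on the common generators.
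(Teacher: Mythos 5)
Your proposal is correct and follows essentially the same route as the paper: both directions invoke the universal properties of $L_R(E)$ and $\Ll_R(\{0\},E)$ respectively, with the involution from Lemma~\ref{into.star.alg} supplied to $L_R(E)$ (with trivial adjoint on $R$) so that $\{p_v, x_e, y_e\}$ forms a $(\{0\},E)$-family, and the maps are checked to be mutually inverse on generators. Your closing remark about the involution bookkeeping is exactly the point the paper handles via Lemma~\ref{into.star.alg}, so there is no gap.
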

\begin{proof}
The defining relations for $\Ll_R(\{0\},E)$ are
\begin{enumerate}
\item[(a)]$\{p_{v,0} : v\in E^0\}\cup \{s_{e,0} : e\in E^1\}$ is an $E$-family in $\Ll_R(\{0\},E)$,
\item[(b)] $(p_{v,0})^*=p_{v,0}$,
\item[(c)]$p_{v,0}p_{w,0}=\delta_{v,w}p_{v,0}$,
\item[(d)]$p_{v,0}s_{e,0}=\delta_{v,r(e)}s_{e,0}$, and
\item[(e)]$s_{e,0}p_{v,0}=\delta_{v,s(e)}s_{e,0}$.
\end{enumerate}
Note that $\{p_{v,0}, s_{e,0}, s_{e,0}^*\}$ satisfy all the relations satisfied by $\{p_v, x_e, y_e\}$ in $L_R(E)$, see \eqref{LER.relations}. Therefore, the universal property of $L_R(E)$ provides an $R$-algebra homomorphism $$\pi_1\colon L_R(E)\to \Ll_R(\{0\},E)$$ such that $\pi_{1}(p_{v})=p_{v,0}$, $\pi_{1}(x_{e})=s_{e,0}$ and $\pi_{1}(y_{e})=s_{e,0}^*$.

We now construct a map in the opposite direction. For this we need all the elements in $L_R(E)$ to have an adjoint. With the trivial adjoint on $R$ we turn $L_R(E)$ into a $^*$-algebra using the adjoint of Lemma~\ref{into.star.alg}. Then $\{p_v, x_e, y_e\}$ satisfy the relations (a)--(e) with $p_{v,0}, s_{e,0}, s_{e,0}^*$ replaced by $p_v, x_e, y_e$. The universal property of $\Ll_R(\{0\},E)$ provides a $^*$-algebra homomorphism
$$\pi_2\colon \Ll_R(\{0\},E)\to L_R(E)$$ such that $\pi_{2}(p_{v,0})=p_{v}$, $\pi_{2}(s_{e,0})=x_{e}$ and $\pi_{2}(s_{e,0}^*)=y_{e}$. We deduce that $\pi_1$ is an $R$-isomorphism with inverse $\pi_2$.
\end{proof}

\section{Proof of Theorem~\ref{thma}{A}}
\label{section.two}
In this section we prove the Graded Uniqueness Theorem~\ref{thma}{A} and the structure result Theorem~\ref{thmb.new}. Much of the work here is inspired by Tomforde who proved the Graded Uniqueness Theorem  \cite[Theorem~5.3]{MR2738365} for Leavitt path algebras. Tomforde proved that a graded homomorphism out of a Leavitt path algebra is injective if it is injective on $\espan_R\{p_{v}: v\in E^0\}$. In our Theorem~\ref{thma}{A} we need the graded homomorphism $\pi$ to be injective on $\Dd\mydef\espan_R\{p_{v,g}: v\in E^0, g\in G\}$; that is, for each $v$ we need to insist that $\pi$ is injective on the image of the group ring $RG$ under the representation $g\mapsto p_{v,g}$.

Theorem~\ref{thmb.new} characterises $\Dd$ as a direct sum of matrix algebras over certain $R$-algebras $\Ww_v$ defined for each $v\in E^0$.  We show that each such $R$-algebra is generated by unitaries $\{W_v^g: g\act{}v=v\}$ inside of a corner of $\Dd$. When looking into how these unitaries behave it turns out that the possibilities are virtually endless. For example, even when $G=\ZZ$ there are cases where all the generators $\{W_v^g: g\act{}v=v\}$ are pairwise distinct and other cases where they all coincide. This has important implications in terms of applying Theorem~\ref{thma}{A} to decide if $\pi$ is injective on $\Dd$, we must first determine the amount of ``collapsing'' that takes place in the canonical homomorphism $g\mapsto p_{v,g}$ and this will vary from example to example.

We now introduce the notation needed to prove Theorem~\ref{thma}{A}. Let $G$ be a discrete group. Following \cite{MR3781435}, a ring $A$ (possibly without unit) is \emph{$G$-graded} if as an additive group it can be written as $A=\bigoplus_{g\in G} A_g $, such that each $A_fA_g\subseteq A_{fg}$. The group $A_g$ is called the \emph{$g$-homogeneous component} of $A$. If $A$ is an algebra over a ring $R$, then $A$ is \emph{$G$-graded} if $A$ is a $G$-graded ring and each $A_g$ is a $R$-submodule of $A$ (i.e., $A_g$ satisfies $RA_g\subseteq A_g$). The elements of $\bigcup_{g\in G} A_g$ in a $G$-graded ring $A$ are called \emph{homogeneous elements} of $A$. The nonzero elements of $A_g$ are called \emph{homogeneous of degree $g$} and we write $\deg(a) = g$ for $a \in A_g\setminus\{0\}$. If $\pi: A\to B$ is a homomorphism between two $G$-graded algebras over a ring $R$, then $\pi$ is a \emph{$G$-graded homomorphism} if $\pi(A_g)\subseteq B_g$ for all $g\in G$.

Let $E$ be a directed graph. We declare vertices to be paths of length $0$ with $r(v)=v=s(v)$. By a path $\alpha$ in $E$ of length $|\alpha|=n \geq  1$, as in \cite[Part 2.3]{MR3581326}, we shall mean any finite sequence of the form $\alpha=\alpha_1\alpha_2\dots\alpha_n$ such that $\alpha_i\in E^1$ and $s(\alpha_i)=r(\alpha_{i+1})$ for all $i$ (this convention agrees with \cite{MR3581326} rather than, for example, \cite{MR3319981}). Here $s(\alpha)\mydef s(\alpha_n)$ and $r(\alpha)\mydef r(\alpha_1)$. For $n\geq 0$ we let $E^n$ denote the set of all paths of length $n$. We let $E^*$ denote the set of all finite paths, so $E^*\mydef\bigcup_{m\geq 0}E^m$. If $\alpha,\beta \in E^*$ satisfy $s(\alpha)=r(\beta)$ we let $\alpha\beta$ be their concatenation.

\begin{defn}
Let $(G,E,\varphi)$ be as in Notation~\ref{note}. Let $R$ be a unital commutative $^*$-ring. For each $g\in G$ and $\alpha=\alpha_1\dots\alpha_n\in E^*$ we define
\[
  s_{\alpha,g} \mydef
  \begin{cases}
     s_{\alpha_1,e_G}s_{\alpha_2,e_G}\dots s_{\alpha_{n-1},e_G}s_{\alpha_{n},g} & \text{if $n>0$,} \\
     p_{\alpha,g} & \text{if $n=0$.}
  \end{cases}
\]
\end{defn}

\begin{remark}
Let $(G,E,\varphi)$ be as in Notation~\ref{note}. Let $R$ be a unital commutative $^*$-ring. Using the relations of a $(G,E)$-family we have
\begin{equation}
\label{small.1}
s_{\alpha,g}=s_{\alpha,g}p_{g^{-1}.s(\alpha),e_G}, \ \ \ s_{\alpha,g}=p_{r(\alpha),e_G}s_{\alpha,g},
\end{equation}
\begin{equation}
\label{small.2}
s_{\alpha,g}=s_{\alpha,e_G}p_{s(\alpha),g},
\end{equation}
\begin{equation}
\label{small.3}
s_{\alpha,g}s_{\beta,h}^*=s_{\alpha,gh^{-1}}s_{\beta,e_G}^*.
\end{equation}
for each $\alpha, \beta\in E^*$ and $g,h\in G$.
\end{remark}

\begin{lemma}
\label{generators.span}
Let $(G,E,\varphi)$ be as in Notation~\ref{note}. Let $R$ be a unital commutative $^*$-ring. Then
$$\Ll_R(G,E)=\espan_R\{s_{\alpha,g}s_{\beta,e_G}^*: \alpha,\beta\in E^*, g\in G, s(\alpha)=g\act{}s(\beta)\}.$$
\end{lemma}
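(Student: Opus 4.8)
The plan is to show that the right-hand $R$-submodule, which I denote $M$, is all of $\Ll_R(G,E)$. By the construction in Theorem~\ref{thm.univarsal}, $\Ll_R(G,E)=\FF_R(Y)/I$ is the $R$-linear span of (images of) finite products of the generators $p_{v,f}$, $s_{e,g}$ and their adjoints. Hence it suffices to prove two things: first, that every generator and every adjoint of a generator lies in $M$; and second, that $M$ is closed under multiplication. Granting these, an induction on the number of factors shows that every product of generators and adjoints lies in $M$, whence $\Ll_R(G,E)\subseteq M$; the reverse inclusion is immediate since each $s_{\alpha,g}$ is a product of generators.

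For the first point I would exhibit each generator explicitly. Using \eqref{small.1} one has $p_{v,f}=s_{v,f}s_{f^{-1}\act{}v,e_G}^*$ and $s_{e,g}=s_{e,g}s_{g^{-1}\act{}s(e),e_G}^*$, while \eqref{small.2} gives $s_{e,g}^*=p_{g^{-1}\act{}s(e),g^{-1}}s_{e,e_G}^*$; in each case the source constraint $s(\alpha)=g\act{}s(\beta)$ is readily verified, and $p_{v,f}^*=p_{f^{-1}\act{}v,f^{-1}}$ is itself a $p$-generator. Thus all generators and their adjoints lie in $M$.

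The substance of the proof, and the main obstacle, is closure under multiplication. It is enough to expand a product of two spanning elements $(s_{\alpha,g}s_{\beta,e_G}^*)(s_{\gamma,h}s_{\delta,e_G}^*)$, and the entire difficulty sits in the inner factor $s_{\beta,e_G}^*s_{\gamma,h}$. I would evaluate this by the usual Cuntz--Krieger reduction: writing $\beta,\gamma$ as products of edges and repeatedly applying \eqref{ref11}--\eqref{ref12}, one cancels common initial edges from the range end, so that the product vanishes unless one of $\beta,\gamma$ is an initial segment of the other. In the three surviving configurations the inner factor collapses to a single decorated object, namely $p_{s(\gamma),h}$ when $\beta=\gamma$, a term $s_{\gamma',h}$ when $\gamma=\beta\gamma'$, and an adjoint term $s_{h^{-1}\act{}\beta',\varphi(h^{-1},\beta')}^*$ when $\beta=\gamma\beta'$.

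It then remains to reabsorb this collapsed middle into the outer factors $s_{\alpha,g}$ and $s_{\delta,e_G}^*$. For this I would use the path-level versions of relations~\ref{GET.d} and~\ref{GET.e} (obtained by an easy induction from the single-edge relations), together with the cocycle identity for $\varphi$ and the rewriting rule \eqref{small.3}, to express the result again in the form $s_{\mu,k}s_{\nu,e_G}^*$. The delicate part is the bookkeeping of group labels: at each reduction one must check that the source-compatibility condition $s(\mu)=k\act{}s(\nu)$ is restored, and this is exactly where the axiom \eqref{axionGE} (equivalently the self-similarity \eqref{selfsim}) enters, allowing one to replace $\varphi(g,\gamma')\act{}v$ by $g\act{}v$ when matching sources. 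In the remaining incomparable case the inner factor is $0$. This establishes closure under multiplication and completes the argument.
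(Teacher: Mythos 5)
Your proposal is correct and takes essentially the same route as the paper: both arguments reduce the lemma to showing that the proposed span absorbs products of generators, with all of the weight carried by the inner factor $s_{\beta,e_G}^*s_{\gamma,h}$, which collapses via Cuntz--Krieger cancellation together with relations \ref{GET.d}--\ref{GET.e}, the cocycle identity, and \eqref{small.1}--\eqref{small.3} (and you correctly flag that restoring the source-compatibility condition is where \eqref{axionGE} enters). The only difference is bookkeeping: the paper first strips the group label via $s_{\alpha,g}=s_{\alpha,e_G}p_{s(\alpha),g}$ so it can quote the undecorated $E$-family fact $M^*M\subseteq M$ and then re-insert the projections, whereas you carry the decorations through an explicit three-case prefix analysis --- a correct unfolding of the same computation.
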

\begin{proof}
Fix $\alpha, \beta\in E^*$ and $g\in G$.  By \eqref{small.1} we have $$s_{\alpha,g}s_{\beta,e_G}^*=\delta_{s(\alpha),g\act{}s(\beta)}s_{\alpha,g}s_{\beta,e_G}^*,$$ so the requirement $s(\alpha)=g\act{}s(\beta)$ is clear. Let $M$ denote the set
$\espan_R\{s_{\mu,e_G}s_{\nu,e_G}^*: \mu,\nu\in E^*\}$. Fix any $\alpha, \beta\in E^*$ and $g, h\in G$. By \eqref{small.1}--\eqref{small.2} we have
$$s_{\alpha,g}=s_{\alpha,e_G}p_{s(\alpha),g}=s_{\alpha,e_G}p_{s(\alpha),e_G}p_{s(\alpha),g}=s_{\alpha,e_G}s_{s(\alpha),e_G}^*p_{s(\alpha),g}\in Mp_{s(\alpha),g}$$ and similarly $s_{\beta,h}\in Mp_{s(\beta),h}$. Using properties of an $E$-family we have $M^*M\subseteq M$. Hence
$$s_{\alpha,g}^*s_{\beta,h}\in (p_{s(\alpha),g})^*Mp_{s(\beta),h}\subseteq \espan_R\{s_{\mu,g}s_{\nu,h}^*: \mu,\nu\in E^*, g,h\in G\}.$$
The desired equality now follows from \eqref{small.3}.
\end{proof}

\begin{remark}
\label{L.span}
Using \eqref{small.2} and borrowing notation from Section~\ref{section.three} (Definition~\ref{semigroup}), $\Ll_R(G,E)=\espan_R\{s_{\alpha,e_G}p_{s(\alpha),g}s_{\beta,e_G}^*: (\alpha,g,\beta)\in  \Ss_{G,E}\}$.
\end{remark}

\begin{lemma}{(cf.~\cite[Proposition~4.7]{MR2738365})}
\label{lem.grading}
Let $(G,E,\varphi)$ be as in Notation~\ref{note}. Let $R$ be a unital commutative $^*$-ring. For each $n\in \ZZ$ define
$$A_n\mydef \espan_R\{s_{\alpha,g}s_{\beta,e_G}^*: \alpha,\beta\in E^*, g\in G, s(\alpha)=g\act{}s(\beta), |\alpha|-|\beta|=n\}.$$
Then $(A_n)_{n\in \ZZ}$ is a $\ZZ$-grading on $\Ll_R(G,E)$.
\end{lemma}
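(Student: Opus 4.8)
The plan is to obtain $(A_n)_{n\in\ZZ}$ as the grading that $\Ll_R(G,E)=\FF_R(Y)/I$ inherits from a natural $\ZZ$-grading on the free $^*$-algebra $\FF_R(Y)$ built in the proof of Theorem~\ref{thm.univarsal}. First I would grade $\FF_R(Y)$ by assigning to the generating symbols the degrees
\[
\deg P_{v,f}=\deg (P_{v,f})^*=0,\qquad \deg S_{e,g}=1,\qquad \deg (S_{e,g})^*=-1,
\]
and declaring a word $x=x_1\cdots x_k\in Y$ to be homogeneous of degree $\sum_i\deg x_i$, that is, the number of $S$-type letters minus the number of $S^*$-type letters occurring in it. Since concatenation of words adds these counts, $\FF_R(Y)=\bigoplus_{n\in\ZZ}\FF_R(Y)_n$ is a $\ZZ$-graded $^*$-algebra; the involution reverses degrees because it interchanges $S$- and $S^*$-letters.

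The crux is to verify that the ideal $I$ is generated by homogeneous elements, for then $I=\bigoplus_n\bigl(I\cap\FF_R(Y)_n\bigr)$ is a graded ideal and the quotient inherits a $\ZZ$-grading. I would check that each of the nine families in \eqref{relations2}, together with their adjoints, consists of differences of words of equal degree. The relations $S_{e,e_G}^*S_{e,e_G}-P_{s(e),e_G}$, $P_{v,e_G}-\sum_{e}S_{e,e_G}S_{e,e_G}^*$, the orthogonality term $(S_{e,e_G}S_{e,e_G}^*)(S_{f,e_G}S_{f,e_G}^*)$, and the projection relations $(P_{v,f})^*-P_{f^{-1}\act v,f^{-1}}$ and $P_{v,f}P_{w,h}-\delta_{v,f\act w}P_{v,fh}$ are homogeneous of degree $0$; the partial-isometry relation $S_{e,e_G}S_{e,e_G}^*S_{e,e_G}-S_{e,e_G}$ and the two families $P_{v,f}S_{e,g}-\delta_{v,r(f\act e)}S_{f\act e,\varphi(f,e)g}$ and $S_{e,g}P_{v,f}-\delta_{g\act v,s(e)}S_{e,gf}$ are homogeneous of degree $1$, the point being that $\deg S_{e,g}=1$ does not depend on the group label $g$. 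Writing $q\colon\FF_R(Y)\to\Ll_R(G,E)$ for the quotient map and $B_n\mydef q(\FF_R(Y)_n)$, this yields $\Ll_R(G,E)=\bigoplus_n B_n$ with $B_mB_n\subseteq B_{m+n}$; in particular the multiplicative compatibility $A_mA_n\subseteq A_{m+n}$ will follow automatically once $A_n=B_n$ is established, so no separate product computation is needed.

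It remains to identify $B_n$ with $A_n$. The inclusion $A_n\subseteq B_n$ is immediate, since each spanning element $s_{\alpha,g}s_{\beta,e_G}^*$ of $A_n$ is the image under $q$ of a word having exactly $|\alpha|$ letters of $S$-type and $|\beta|$ letters of $S^*$-type, hence of degree $|\alpha|-|\beta|=n$. For the reverse inclusion I would invoke Lemma~\ref{generators.span} to write an arbitrary element of $\Ll_R(G,E)$ as a finite sum $\sum_m a_m$ with $a_m\in A_m\subseteq B_m$; given $b\in B_n$, we then have $b-a_n\in B_n\cap\bigoplus_{m\neq n}B_m=\{0\}$ by directness of the sum $\bigoplus_m B_m$, whence $b=a_n\in A_n$. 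Thus $B_n=A_n$ for every $n$, and $(A_n)_{n\in\ZZ}$ is a $\ZZ$-grading on $\Ll_R(G,E)$.

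I expect the homogeneity check of the relations in \eqref{relations2} to be the main (albeit routine) obstacle; everything else is formal bookkeeping once the free $^*$-algebra is graded and the ideal is shown to be graded. The subtle point there is precisely that the path-length degree must ignore the group labels, so that the self-similarity relations $P_{v,f}S_{e,g}=\delta_{v,r(f\act e)}S_{f\act e,\varphi(f,e)g}$ and $S_{e,g}P_{v,f}=\delta_{g\act v,s(e)}S_{e,gf}$ remain homogeneous despite altering those labels.
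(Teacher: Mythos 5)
Your proposal is correct and takes essentially the same route as the paper's proof: grade the free $^*$-algebra $\FF_R(Y)$ by declaring $\deg P_{v,f}=\deg(P_{v,f})^*=0$, $\deg S_{e,g}=1$, $\deg (S_{e,g})^*=-1$, check that the generators of $I$ in \eqref{relations2} are homogeneous so that $I$ is a graded ideal, and push the grading down to $\Ll_R(G,E)$, identifying $A_n$ with the image $q(\FF_R(Y)_n)$. If anything, your bookkeeping is slightly more careful than the paper's at two points: the paper asserts all generators of $I$ are homogeneous of degree $0$, whereas (as you note) the partial-isometry relation and the two families involving $P_{v,f}S_{e,g}$ and $S_{e,g}P_{v,f}$ are homogeneous of degree $1$ (homogeneity, not degree $0$, is what matters for gradedness of $I$), and your explicit verification that $A_n=q(\FF_R(Y)_n)$ via Lemma~\ref{generators.span} and directness of the sum spells out a step the paper compresses into ``We see that $A_n=q(\FF_R(Y)_n)$''.
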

\begin{proof}
Define the symbols $X\mydef \{ P_{v,f}, S_{e,g}, (P_{v,f})^*, (S_{e,g})^*\}$ and the words $Y\mydef\{x_1\dots x_n : n\geq 1, x_i\in X\}$. Recall that from the proof of Thorem~\ref{thm.univarsal}, $\Ll_R(G,E)$ is the quotient of the free $^*$-algebra $\FF_R(Y)$ by the ideal $I$ generated by the elements of the sets \eqref{relations2} and their adjoints.

The $^*$-algebra $\FF_R(Y)$ has a unique $\ZZ$-grading for which the elements $P_{v,f}$, $S_{e,g}$, $(P_{v,f})^*$, $(S_{e,g})^*$ of $X$ have degrees $0, 1, 0$ and $-1$, respectively. Moreover, each generator of $I$ is homogeneous of degree $0$. It follows that $I$ is a graded ideal, in the sense of \cite[Definition~4.6]{MR2738365}, i.e.,
$$I=\bigoplus_{n\in \ZZ} (I\cap \FF_R(Y)_n).$$
Hence $\Ll_R(G,E)$ admits a natural $\ZZ$-grading such that the quotient map $q\colon  \FF_R(Y) \to \Ll_R(G,E)$ a $\ZZ$-graded homomorphism. We see that $A_n=q(\FF_R(Y)_n)$ for all $n$, so $(A_n)_{n\in \ZZ}$ is a $\ZZ$-grading on $\Ll_R(G,E)$.
\end{proof}

With Lemma~\ref{generators.span} and Lemma~\ref{lem.grading} at our disposal we are in position to prove Theorem~\ref{thma}{A}.

\begin{proof}[Proof of Theorem~\ref{thma}{A}]
To ease notation we define $A\mydef \Ll_R(G,E)$. Suppose that $\pi(a)=0$. We must show that $a=0$. Write $a=\sum_{n\in \ZZ}a_n$ such that $a_n\in A_n$ for each $n\in \ZZ$. Since $\pi$ and $B$ are graded, each $\pi(a_n)=0$. Since $(A_n)^*=A_{-n}$, it suffices to show $a_n=0$ for each $n\geq 0$. Fix such $n$ and for convenience set $d\mydef a_n$. We may write $d$ as a finite sum as follows
$$d=\sum_{i\in F}r_i s_{\alpha_i,g_i}s_{\beta_i,e_G}^*=\sum_{i\in F}r_i s_{\alpha_i,e_G}p_{s(\alpha_i),g_i}s_{\beta_i,e_G}^*$$
with $\alpha_i,\beta_i\in E^*, g_i\in G$ such that $|\alpha_i|-|\beta_i|=n$. For each $i\in F$ set $v_i\mydef g_i^{-1}\act{}s(\alpha_i)$. We have
$$p_{s(\alpha_i),g_i}=p_{s(\alpha_i),g_i}p_{v_i,e_G}
=\sum_{e\in v_iE^1}p_{s(\alpha_i),g_i}s_{e, e_G}s_{e, e_G}^*=\sum_{e\in v_iE^1}s_{g_i\act{}e,\varphi(g_i, e)}s_{e, e_G}^*,$$
so we may assume there are $m_1,m_2\in \NN$ such that $\alpha_i\in E^{m_1}$ and $\beta_i\in E^{m_2}$ for all $i\in F$. For each $j\in F$ set $F(j)\mydef \{i\in F : (\alpha_i,\beta_i)=(\alpha_j,\beta_j)\}$. Since $s_{\alpha,e_G}^*s_{\beta,e_G}=\delta_{\alpha,\beta}p_{s(\alpha),e_G}$ for $\alpha, \beta\in E^*$ such that $|\alpha|=|\beta|$
and since $s_{\alpha_j,g_i}=s_{\alpha_j,e_G}p_{s(\alpha_j),g_i}$
we get
\begin{align}
s_{\alpha_j,e_G}^*ds_{\beta_j,e_G}&=\sum_{i\in F(j)}r_i (s_{\alpha_j,e_G}^*s_{\alpha_j,e_G})p_{s(\alpha_j),g_i}(s_{\beta_j,e_G}^*s_{\beta_j,e_G})\\
&=\sum_{i\in F(j)}r_i (p_{s(\alpha_j),e_G})p_{s(\alpha_j),g_i}(p_{s(\beta_j),e_G})\in \Dd.
\end{align}

Since $\pi(d)=\pi(a_n)=0$, we have $\pi(s_{\alpha_j,e_G}^*ds_{\beta_j,e_G})=0$. By injectivity of $\pi$ on $\Dd$ we have $s_{\alpha_j,e_G}^*ds_{\beta_j,e_G}=0$. Since each $s_{\mu,e_G}$ is a partial isometry and $s_{\alpha_j,g_i}=s_{\alpha_j,e_G}p_{s(\alpha_j),g_i}$ we conclude that
$$s_{\alpha_j,e_G}s_{\alpha_j,e_G}^*ds_{\beta_j,e_G}s_{\beta_j,e_G}^*=\sum_{i\in F(j)}r_i s_{\alpha_i,g_i}s_{\beta_i,e_G}^*=0.$$
As $F$ is a disjoint union of subsets of the form $F(j)$, we deduce $a_n=d=0$ as requested.
\end{proof}

We now turn to the proof of Theorem~\ref{thmb.new} describing the $R$-algebra $\Dd$ used in the statement of the Graded Uniqueness Theorem~\ref{thma}{A}. The proof essentially boils down to identifying the appropriate matrix units and algebraic tensor products  inside of $\Dd$. We start by recalling the notion of ``matrix units''.

\begin{defn}
\label{matrix.units}
Let $X$ be a non-empty set. Write $M_X(R)$, or just $M_X$, for the universal $^*$-algebra over $R$ generated by elements $$\{\eta_{x,y} : x, y \in X\}$$ satisfying $\eta_{x,y}^*=\eta_{y,x}$ and $\eta_{x,y}\eta_{w,z}=\delta_{y,w}\eta_{x,z}$. We call the $\eta_{x,y}$ the \emph{matrix units} for $M_X$.
\end{defn}

\begin{lemma}
\label{matrix.decomposition.pre}
Let $(G,E,\varphi)$ be as in Notation~\ref{note}. Let $R$ be a unital commutative $^*$-ring and let $v\in E^0$ be any vertex.  Let $g_v\mydef e_G$. For each $w\in G\act{}v\setminus\{v\}$, fix $g_w\in G$ such that $w=g_w\act{}v$.
\begin{enumerate}
\item \label{item.matrix.units} For each $w,w'\in G\act{}v$ define
$$e_{w,w'}\mydef p_{w,g_w}p_{w',g_{w'}}^*=p_{w,g_w(g_{w'})^{-1}}.$$
Then $\Mm_v\mydef \espan_R \{e_{w, w'} : w,w'\in G\act{}v\}$ is isomorphic to $M_{G\act{}v}$ via the map sending
$e_{w,w'}$ to $\eta_{w,w'}$.
\item \label{item.matrix.units.2} For each $w,w'\in G\act{}v$ and $g\in \stab{v}\mydef \{g\in G : g\act{}v=v\}$ define
$$e_{w,w'}^g\mydef p_{w,g_w}p_{v,g}p_{w',g_{w'}}^*=p_{w,g_wg(g_{w'})^{-1}}.$$
Then $e_{w,w'}^ge_{u,u'}^h=\delta_{w',u}e_{w,u'}^{gh}$ and $(e_{w,w'}^g)^*=e_{w',w}^{g^{-1}}$ for all $g,h\in G$ and $w,w',u,u'\in G\act{}v$.
\item \label{item.matrix.units.3} Suppose $G\act{}v$ is finite. For each $g\in \stab{v}$ define
$$W_v^g\mydef \sum_{w\in G\act{}v}e_{w,w}^g, \ \ \ (W_v^g)^*\mydef \sum_{w\in G\act{}v}e_{w,w}^{g^{-1}}.$$
Then $\Ww_v \  \mydef \ \espan_R \{W_v^g : g \in \stab{v}\}$ is a $^*$-algebra over $R$, and  $W_v^gW_v^h=W_v^{gh}$ for all $g,h \in \stab{v}$.
\end{enumerate}
\end{lemma}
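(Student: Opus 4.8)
The plan is to treat all three parts as consequences of the multiplication and involution rules \ref{GET.b} and \ref{GET.c} for a $(G,E)$-family, reserving a single non-formal input for the injectivity claim in part~\ref{item.matrix.units}. First I would record the normalisation that makes every subsequent computation mechanical: since $g_{w'}^{-1}\act{}w'=v$, relation~\ref{GET.b} gives $p_{w',g_{w'}}^{*}=p_{v,g_{w'}^{-1}}$, and then relation~\ref{GET.c} (using $g_w\act{}v=w$, and $g\act{}v=v$ for $g\in\stab{v}$) collapses the defining products to the stated closed forms $e_{w,w'}=p_{w,g_wg_{w'}^{-1}}$ and $e_{w,w'}^{g}=p_{w,g_wgg_{w'}^{-1}}$. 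I would verify these two identities once at the outset.

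For part~\ref{item.matrix.units} I would check the matrix-unit relations directly from the closed form. The relation $e_{w,w'}^{*}=e_{w',w}$ is immediate from \ref{GET.b}, and for $e_{w,w'}e_{u,u'}$ relation~\ref{GET.c} produces the factor $\delta_{w,(g_wg_{w'}^{-1})\act{}u}$, which by orbit bookkeeping equals $\delta_{w',u}$; when $w'=u$ the fixed choice $g_u=g_{w'}$ cancels and leaves $e_{w,u'}$. Hence $\{e_{w,w'}\}$ satisfies the defining relations of the matrix units of $M_{G\act{}v}$, so the universal property of $M_{G\act{}v}$ yields a surjective $^{*}$-homomorphism $\theta\colon M_{G\act{}v}\to\Mm_v$ with $\theta(\eta_{w,w'})=e_{w,w'}$, and the asserted isomorphism is $\theta^{-1}$.

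The one substantial point, and the step I expect to be the main obstacle, is the injectivity of $\theta$. Since $\{\eta_{w,w'}\}$ is an $R$-basis of $M_{G\act{}v}$, injectivity is precisely $R$-linear independence of $\{e_{w,w'}\}$. I would reduce this to a statement about vertex projections: given $\sum_{w,w'}r_{w,w'}e_{w,w'}=0$, compressing on the left by $e_{w_0,w_0}=p_{w_0,e_G}$ and on the right by $e_{w_1,w_1}=p_{w_1,e_G}$ isolates $r_{w_0,w_1}e_{w_0,w_1}=0$, and multiplying by $(e_{w_0,w_1})^{*}$ further reduces this to $r_{w_0,w_1}\,p_{w_0,e_G}=0$. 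It therefore suffices to know that each vertex projection spans a free rank-one $R$-module, i.e.\ that $r\,p_{v,e_G}=0$ forces $r=0$. This is the genuinely non-formal ingredient; I would supply it via the canonical homomorphism $\pi_{G,E}$ of Proposition~\ref{nonzero}, under which $p_{v,e_G}$ is sent to a nonzero $R$-valued indicator function supported on a compact open subset of the (always Hausdorff) unit space of $\Gg_{\text{tight}}(\Ss_{G,E})$, whence $r\,p_{v,e_G}=0$ immediately gives $r=0$.

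Parts~\ref{item.matrix.units.2} and~\ref{item.matrix.units.3} are then bookkeeping on top of part~\ref{item.matrix.units}. For part~\ref{item.matrix.units.2} I would compute $e_{w,w'}^{g}e_{u,u'}^{h}$ from the closed form $p_{w,g_wgg_{w'}^{-1}}\,p_{u,g_uhg_{u'}^{-1}}$ via \ref{GET.c}: the delta again reduces to $\delta_{w',u}$, and when $w'=u$ the cancellation $g_{w'}^{-1}g_u=e_G$ leaves the group element $g_wghg_{u'}^{-1}$, giving $e_{w,u'}^{gh}$; the identity $(e_{w,w'}^{g})^{*}=e_{w',w}^{g^{-1}}$ follows from \ref{GET.b} in the same way. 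Finally, for part~\ref{item.matrix.units.3}, finiteness of $G\act{}v$ makes each $W_v^{g}$ a finite sum, and part~\ref{item.matrix.units.2} gives
\[
W_v^{g}W_v^{h}=\sum_{w,u\in G\act{}v}e_{w,w}^{g}e_{u,u}^{h}=\sum_{w\in G\act{}v}e_{w,w}^{gh}=W_v^{gh},
\qquad (W_v^{g})^{*}=\sum_{w\in G\act{}v}e_{w,w}^{g^{-1}}=W_v^{g^{-1}}.
\]
Thus $\espan_R\{W_v^{g}\}$ is closed under both multiplication and involution, hence is a $^{*}$-subalgebra of $\Dd$, which completes the proof.
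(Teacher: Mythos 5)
Your proposal is correct and follows essentially the same route as the paper: derive the closed forms $e_{w,w'}=p_{w,g_wg_{w'}^{-1}}$ and $e^g_{w,w'}=p_{w,g_wgg_{w'}^{-1}}$ from relations \ref{GET.b}--\ref{GET.c}, verify the matrix-unit relations, invoke the universal property of $M_{G\act{}v}$, and settle injectivity by compressing a vanishing linear combination to a single term and then appealing to the nondegeneracy of the generators supplied by the Steinberg-algebra homomorphism of Proposition~\ref{nonzero} (the paper cites this via Remark~\ref{rem.nonzero}), with parts \eqref{item.matrix.units.2} and \eqref{item.matrix.units.3} as direct computations. Your only deviations are cosmetic: you compute $e^g_{w,w'}e^h_{u,u'}$ straight from the closed form where the paper factors through $e_{w,v}p_{v,g}e_{v,w'}$, and you add the harmless extra reduction $r_{w_0,w_1}e_{w_0,w_1}=0\Rightarrow r_{w_0,w_1}p_{w_0,e_G}=0$ before passing to the indicator-function model.
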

\begin{proof}
First we prove \eqref{item.matrix.units}: Since $g_w^{-1}\act{}w=v=(g_{w'})^{-1}\act{}w'$, properties \ref{GET.b}--\ref{GET.c} of a $(G,E)$-family (Definition~\ref{def.LGE}) give
$$e_{w,w'}=
p_{w,g_w} p_{(g_{w'})^{-1}\act{w'}, (g_{w'})^{-1}}=\delta_{w,g_w\act{}((g_{w'})^{-1}\act{w'})}p_{w,g_w(g_{w'})^{-1}}=p_{w,g_w(g_{w'})^{-1}}.$$
Clearly $e_{w,w'}^*=e_{w',w}$ for $w,w'\in G\act{}v$. For $w,w',u,u'\in G\act{}v$ ,
$$e_{w,w'}e_{u,u'}=p_{w,g_w(g_{w'})^{-1}}p_{u,g_u(g_{u'})^{-1}}=\delta_{w,g_w(g_{w'})^{-1}\act{}u}p_{w,g_w(g_{w'})^{-1}g_u(g_{u'})^{-1}}.$$
Now, the equality $w=g_w(g_{w'})^{-1}\act{}u$ simplifies to $v=(g_{w'})^{-1}\act{}u$ or equivalently $w'=g_{w'}\act{}v=u$. So $e_{w,w'}e_{u,u'}=\delta_{w',u}p_{w,g_w(g_{u'})^{-1}}=\delta_{w',u}e_{w,u'}$. Hence $(e_{w,w'})_{w,w'\in G\act{}v}$ form matrix units with $e_{w,w}=p_{w,e_G}$ and $e_{w,v}=p_{w,g_w}$. By the universal property of $M_{G\act{}v}$ there exists a $^*$-algebra homomorphism $\pi\colon M_{G\act{}v}\to \Mm_v$ such that $\pi(\eta_{w,w'})=e_{w,w'}$.

The map $\pi$ is surjective by linearity. We prove $\pi$ is injective. Suppose that $\pi(\sum_{w,w'\in G\act{}v}r_{w,w'}\eta_{w,w'})=0$. For any $w',w''\in G\act{}v$,
$$r_{w',w''}e_{w',w''}=e_{w',w'}\Big(\sum_{w,u\in G\act{}v}r_{w,u}e_{w,u}\Big)e_{w'',w''}=0.$$
By Remark~\ref{nonzero} we know that $r_{w',w''}=0$, so $\pi$ is injective. Hence
$$M_{G\act{}v}\cong  \Mm_v.$$

To prove \eqref{item.matrix.units.2} we use that the collection $(e_{w,w'})_{w,w'\in G\act{}v}$ forms matrix units with $e_{w,w}=p_{w,e_G}$ and $e_{w,v}=p_{w,g_w}$. This gives the equalities $e_{w,w'}^g=e_{w,v}p_{v,g}e_{v,w'}$, $e_{v,w'}e_{u,v}=\delta_{w',u}e_{v,v}=\delta_{w',u}p_{v,e_G}$, and $p_{v,g}p_{v,e_G}p_{v,h}=p_{v,gh}$. It follows that
$$e_{w,w'}^ge_{u,u'}^h=e_{w,v}(p_{v,g}(e_{v,w'}e_{u,v})p_{v,h})e_{v,u'}=e_{w,v}\delta_{w',u}p_{v,gh}e_{v,u'}=\delta_{w',u}e_{w,u'}^{gh}.$$
Finally $e_{w,w'}^g=e_{w,v}e^g_{v,v}e_{v,w'}$, $e^g_{v,v}=p_{v,g}$ and $(p_{v,g})^*=p_{v,g^{-1}}$, so $(e_{w,w'}^g)^*=e_{w',w}^{g^{-1}}$.

The final property \eqref{item.matrix.units.3} follows from the computation
\begin{equation}
\begin{split}
W_v^gW_v^h&=\Big(\sum_{w\in G\act{}v}e_{w,w}^g\Big)\Big(\sum_{w'\in G\act{}v}e_{w',w'}^h\Big)=\sum_{w\in G\act{}v}\Big(\sum_{w'\in G\act{}v}e_{w,w}^ge_{w',w'}^h\Big),\\
&=\sum_{w\in G\act{}v}(e_{w,w}^ge_{w,w}^h)=\sum_{w\in G\act{}v}(e_{w,w}^{gh})=W_v^{gh}. \qedhere
\end{split}
\end{equation}
\end{proof}

\begin{defn}
\label{tensors}
Let $R$ be a ring. The \emph{algebraic tensor product} $A\algtensor  B$ of $^*$-algebras $A$ and $B$ over $R$ is the universal $^*$-algebra over $R$ generated by elements $\{a\otimes b :a\in A, b\in B\}$ subject to the relations
\begin{equation}
\begin{split}
&(a_1\otimes b_1)(a_2\otimes b_2)=a_1a_2\otimes b_1b_2,\\
&r(a_1\otimes b_1)=(ra_1)\otimes b_1=a_1\otimes (rb_1),\\
&(a_1\otimes b_1)^*=a_1^*\otimes b_1^*, \ \ a_1\otimes (b_1+b_2)=(a_1\otimes b_1)+(a_1\otimes b_2),\\
&(a_1+a_2)\otimes b_1=(a_1\otimes b_1)+(a_2\otimes b_1).
\end{split}
\end{equation}
Note that $A\algtensor B$ is spanned by the elements $\{a\otimes b :a\in A, b\in B\}$.
\end{defn}

\begin{prop}
\label{matrix.decomposition}
Let $(G,E,\varphi)$ be as in Notation~\ref{note}. Let $R$ be a unital commutative $^*$-ring. Suppose that  $V\subseteq E^0$ has the property that $E^0=\bigsqcup_{v\in V} G\act{}v$ is a partition of $E^0$. For $v\in V$, set $B_v\mydef \espan_R \{p_{w,g}: w\in G\act{}v, g\in G\}$.
\begin{enumerate}
\item \label{item.tensor.1} Each $B_v$ is a $^*$-algebra over $R$ and with $\Dd$ as in Theorem~\ref{thma}{A},
$$\Dd= \bigoplus_{v\in V} B_v.$$
\item \label{item.tensor.2} Suppose that $v\in V$ satisfies $|G\act{}v|<\infty$. With $\Ww_v, (e_{w,w'}^{g}), W_v^{g}$ as in Lemma~\ref{matrix.decomposition.pre}, there exists a surjective $^*$-algebra homomorphism
$$\psi_v\colon M_{G\act{}v}\algtensor \Ww_v \to B_v$$
sending $\eta_{w,w'}\otimes W_v^{g}$ to $e_{w,w'}^{g}$.
\item \label{item.tensor.3} Suppose that $G\act{}v$ is finite for each $v\in V$. Then each $\psi_v$ is an isomorphism. In particular
$$\Dd \cong \bigoplus_{v\in V} \left(M_{G\act{}v} \algtensor \Ww_v\right).$$
\end{enumerate}
\end{prop}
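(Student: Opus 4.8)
The plan is to establish the three assertions in turn; the first two are essentially bookkeeping built on Lemma~\ref{matrix.decomposition.pre}, while the third carries the real content.

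For the first assertion I would first check that each $B_v$ is a $^*$-subalgebra of $\Dd$: relation~\ref{GET.b} of Definition~\ref{def.LGE} gives $(p_{w,g})^*=p_{g^{-1}\act{}w,g^{-1}}$ with $g^{-1}\act{}w\in G\act{}v$, and relation~\ref{GET.c} gives $p_{w,g}p_{w',g'}=\delta_{w,g\act{}w'}p_{w,gg'}$, whose range vertex again lies in $G\act{}v$; so $B_v$ is closed under $^*$ and multiplication. The equality $\Dd=\sum_{v\in V}B_v$ is immediate from the partition hypothesis. To see the sum is direct I would note that if $v\neq v'$ then $G\act{}v\cap G\act{}v'=\emptyset$, so $\delta_{w,g\act{}u}=0$ whenever $w\in G\act{}v$ and $u\in G\act{}v'$; hence $B_vB_{v'}=0$. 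Given $\sum_v b_v=0$ with $b_v\in B_v$ almost all zero, each $b_v$ involves only finitely many vertices $S\subseteq G\act{}v$, and the idempotent $e_S\mydef\sum_{u\in S}p_{u,e_G}$ satisfies $e_Sb_v=b_v$ while $e_Sb_{v'}\in B_vB_{v'}=0$ for $v'\neq v$; multiplying the relation by $e_S$ forces $b_v=0$. This proves $\Dd=\bigoplus_{v\in V}B_v$.

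For the second assertion I would build $\psi_v$ from the universal property of the algebraic tensor product (Definition~\ref{tensors}). Lemma~\ref{matrix.decomposition.pre}\eqref{item.matrix.units} provides a $^*$-isomorphism $\rho\colon M_{G\act{}v}\to\Mm_v\subseteq B_v$ with $\rho(\eta_{w,w'})=e_{w,w'}$, and since $\Ww_v$ is a $^*$-subalgebra of $B_v$ by Lemma~\ref{matrix.decomposition.pre}\eqref{item.matrix.units.3}, I take $\tau\colon\Ww_v\hookrightarrow B_v$ to be the inclusion. Writing $e_{w,w'}=e_{w,w'}^{e_G}$ and using the multiplication rule of Lemma~\ref{matrix.decomposition.pre}\eqref{item.matrix.units.2}, a short computation gives $e_{w,w'}W_v^h=e_{w,w'}^h=W_v^he_{w,w'}$, so the ranges of $\rho$ and $\tau$ commute; the universal property then yields a $^*$-homomorphism $\psi_v(a\otimes b)=\rho(a)\tau(b)$ with $\psi_v(\eta_{w,w'}\otimes W_v^g)=e_{w,w'}^g$. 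For surjectivity I would observe that any generator $p_{u,h}$ of $B_v$ (with $u\in G\act{}v$) equals $e^{g}_{u,\,h^{-1}\act{}u}$ for $g\mydef g_u^{-1}hg_{h^{-1}\act{}u}$, since $e^{g}_{u,\,h^{-1}\act{}u}=p_{u,\,g_ug(g_{h^{-1}\act{}u})^{-1}}=p_{u,h}$ and a direct check shows $g\act{}v=v$, so $g\in\stab{v}$; hence every generator lies in the image.

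The third assertion, injectivity of $\psi_v$, is where I expect the real work. As $G\act{}v$ is finite, $M_{G\act{}v}$ is free as an $R$-module on its matrix units (their independence is exactly the injectivity shown in Lemma~\ref{matrix.decomposition.pre}\eqref{item.matrix.units}), so every element of $M_{G\act{}v}\algtensor\Ww_v$ is uniquely $x=\sum_{w,w'}\eta_{w,w'}\otimes b_{w,w'}$ with $b_{w,w'}\in\Ww_v$, and $\psi_v(x)=\sum_{w,w'}e_{w,w'}b_{w,w'}$. To isolate one coefficient I would compress: since each $b_{w,w'}$ commutes with the $e$'s, $e_{a,a}\psi_v(x)e_{b,b}=e_{a,b}b_{a,b}$, so $\psi_v(x)=0$ forces $e_{a,b}b_{a,b}=0$ for all $a,b$, and left-multiplying by $e_{b,a}$ gives $e_{b,b}b_{a,b}=0$. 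The crucial last step is the identity $e_{w,w}^h=e_{w,b}e_{b,b}^he_{b,w}$ (from the multiplication rule): writing $b_{a,b}=\sum_h c_hW_v^h$ and $W_v^h=\sum_w e_{w,w}^h$, it gives $b_{a,b}=\sum_w e_{w,b}\,(e_{b,b}b_{a,b})\,e_{b,w}=0$ in $B_v$, hence in $\Ww_v$. Thus all $b_{a,b}=0$, so $x=0$ and $\psi_v$ is injective. I expect this identity to be the main obstacle, as it is what makes compression to the $(b,b)$-corner faithful on $\Ww_v$; notably it sidesteps any linear independence of the generators $p_{w,g}$, since all the ``collapsing'' is already absorbed into the definition of $\Ww_v$. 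Combining injectivity with the surjectivity above makes each $\psi_v$ an isomorphism, and together with the first assertion this yields $\Dd\cong\bigoplus_{v\in V}(M_{G\act{}v}\algtensor\Ww_v)$.
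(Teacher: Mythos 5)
Your proof is correct and follows essentially the same route as the paper: parts (1) and (2) match almost verbatim (the paper builds $\psi_v$ from the $R$-bilinear map $(a,b)\mapsto ab$ rather than from a pair of commuting $^*$-homomorphisms, and asserts rather than verifies the surjectivity identity you check for $p_{u,h}$), and your injectivity argument in (3) rests on the same corner-compression mechanism, the conjugation identity $W_v^g=\sum_w e_{w,v}\,e^g_{v,v}\,e_{v,w}$ being exactly what powers the paper's computation that its subspace $I\subseteq\Ww_v$ is zero. The only organizational difference is that the paper first writes $\ker\psi_v=M_{G\act{}v}\algtensor I$ with $I=\{a_{v,v}:a\in\ker\psi_v\}$ and then shows $I=\{0\}$ by compressing at the single corner $(v,v)$, whereas you compress at every corner $(a,b)$ directly, which bypasses having to justify that decomposition of the kernel.
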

\begin{proof}
First we prove \eqref{item.tensor.1}: Fix $v\in V$. For any $g,g'\in G$ and any $w,w'\in G\act{}v$
\begin{equation}\label{projections}
\begin{split}
(p_{w,g})^*=p_{g^{-1}\act{}w,g^{-1}}\in B_v, \ \ \ p_{w,g}p_{w',g'}=\delta_{w,g\act{}w'}p_{w,gg'}\in B_v,
\end{split}
\end{equation}
so $B_v$ is a $^*$-algebra over $R$. Clearly $\Dd\mydef\espan_R\{p_{v,f}: v\in E^0, f\in G\}$ is the $R$-span of elements in $\bigcup_{v\in V} B_v$. Moreover, elements from two distinct algebras $B_v,B_{v'}$ have product zero, so $\Dd\cong \bigoplus_{v\in V} B_v$.

To prove \eqref{item.tensor.2} fix $v\in V$.  For any $a_i\mydef e_{w_i, w'_i} \in \Mm_v$, and any $b_j\mydef  W_v^{g_j}\in \Ww_v$ with $i,j\in \{1,2\}$ define $$a_i \Box b_j\mydef a_i b_j.$$
Since $a_ib_j=b_ja_i$ we get
\begin{equation*}
\begin{split}
&(a_1\Box b_1)(a_2\Box b_2)=a_1a_2\Box b_1b_2,\ \ r(a_1\Box b_1)=(ra_1)\Box b_1=a_1\Box (rb_1),\\
&(a_1\Box b_1)^*=a_1^*\Box b_1^*, \ \ a_1\Box (b_1+b_2)=(a_1\Box b_1)+(a_1\Box b_2),\\
&(a_1+a_2)\Box b_1=(a_1\Box b_1)+(a_2\Box b_1).
\end{split}
\end{equation*}
Notice that $a_i \Box b_j=e_{w, w'}W_v^{g_j}=e_{w_i,w'_i}^{g_j} \in B_v$. By $R$-linearity the above properties extend to any elements $a_i \in \Mm_v$ and $b_j \in \Ww_v$. Let $\varphi\colon \Mm_v\times \Ww_v\to \Mm_v\algtensor \Ww_v$ denote  the $R$-bilinear map $(a,b)\mapsto a\otimes b$. Define $h\colon \Mm_v\times \Ww_v \to B_v$ by $h(a,b)=a\Box b$. Since $h$ is $R$-bilinear the universal property of tensor products gives a unique $R$-linear homomorphism
$$\tilde{h}\colon  \Mm_v\algtensor \Ww_v \to B_v$$
satisfying $h=\tilde{h}\circ\varphi$. With $\stab{v}\mydef \{g\in G : g\act{}v=v\}$ as in Lemma~\ref{matrix.decomposition.pre} we have
\begin{equation}
B_v=\espan_R\{e_{w, w'}W_v^g : w,w'\in G\act{}v, g\in \stab{v} \}.
\end{equation}
Thus $\tilde{h}$ is surjective. Since $\tilde{h}$ is multiplicative, preserves adjoints on elementary tensors and is $R$-linear, it is multiplicative and $^*$-preserving on all of  $ \Mm_v\algtensor \Ww_v$. Hence $\tilde{h}$ is a surjective homomorphism of $^*$-algebras over $R$, sending $e_{w,w'}\otimes W_v^{g}$ to $e_{w,w'}\Box W_v^{g}$. The isomorphism $\Mm_v\cong M_{G\act{}v}$ of Lemma~\ref{matrix.decomposition.pre} now provides the desired map $\psi_v\colon M_{G\act{}v}\algtensor \Ww_v \to B_v$.

For \eqref{item.tensor.3}, fix $v\in V$. Each element $d\in B_v$ may be written as
$$d=\sum_{w,w'\in G\act{}v}\sum_{g\in F_{w,w'}} r_{(g,w,w')} e^g_{w,w'},$$
for some finite subsets $F_{w,w'}\subseteq \stab{v}$ and coefficients $r_{(g,w,w')}\in R$. Let $J\mydef\ker(\psi_v)$. Then
$J=M_{G\act{}v} \algtensor I\cong M_{G\act{}v}(I)$ where $I\subseteq \Ww_v$ is given by
\begin{equation*}
\begin{split}
I&\mydef \{a_{v,v} : a\in J\}\\
&=\Big\{a_{v,v}: \psi_v(a)=0, a=\sum_{w,w'\in G\act{}v}\sum_{g\in F_{w,w'}} r_{(g,w,w')} \, \eta_{w,w'}\otimes W_v^{g}\big\}\\
&=\Big\{a_{v,v}: \psi_v(a)=0, a=\sum_{w,w'\in G\act{}v} \eta_{w,w'}\otimes \sum_{g\in F_{w,w'}} r_{(g,w,w')} W_v^{g}\Big\}\\
&=\Big\{\sum_{g\in F_{v,v}} r_{(g,v,v)} W_v^{g} : \sum_{w,w'\in G\act{}v}\sum_{g\in F_{w,w'}} r_{(g,w,w')} e^g_{w,w'}=0\Big\}\\
&=\Big\{\sum_{g\in F_{v,v}} r_{(g,v,v)} W_v^{g} : \sum_{g\in F_{v,v}} r_{(g,v,v)} e^g_{v,v}=0\Big\}\\
&=\Big\{\sum_{g\in F_{v,v}} r_{(g,v,v)} W_v^{g} : \sum_{g\in F_{v,v}} r_{(g,v,v)} W_v^{g}=0\Big\}\\
&=\{0\}.
\end{split}
\end{equation*}
Hence $J=M_{G\act{}v} \algtensor I=\left\{0\right\}$, so $\psi_v\colon M_{G\act{}v}\algtensor \Ww_v \to B_v$ is injective, and hence an isomorphism.
\end{proof}

We are now in position to describe the $R$-algebra $\Dd$ appearing in the statement of the Graded Uniqueness Theorem~\ref{thma}{A}.

\begin{thm}
\label{thmb.new}
Let $(G,E,\varphi)$ be as in Notation~\ref{note}. Let $R$ be a unital commutative $^*$-ring. Fix  $V\subseteq E^0$ such that $E^0=\bigsqcup_{v\in V} G\act{}v$ is a partition of $E^0$. For each $v\in V$ let $\Ww_v$ be the $^*$-algebra of Lemma~\ref{matrix.decomposition.pre}. If $E$ is finite or for each $v\in V$, $G\act{}v$ is finite then
$$\Dd \cong \bigoplus_{v\in V} \left(M_{G\act{}v} \algtensor \Ww_v\right).$$
\end{thm}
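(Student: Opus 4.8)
The plan is to deduce Theorem~\ref{thmb.new} essentially verbatim from Proposition~\ref{matrix.decomposition}, since that proposition already contains all the substantive content. The central observation is that both hypotheses offered in Theorem~\ref{thmb.new}---that $E$ is finite, or that each orbit $G\act{}v$ (for $v\in V$) is finite---guarantee the single condition needed to invoke Proposition~\ref{matrix.decomposition}\eqref{item.tensor.3}, namely that every orbit $G\act{}v$ with $v\in V$ is a finite set.

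First I would dispatch the case in which $G\act{}v$ is finite for each $v\in V$. This is exactly the hypothesis of Proposition~\ref{matrix.decomposition}\eqref{item.tensor.3}, whose conclusion is precisely the asserted isomorphism $\Dd \cong \bigoplus_{v\in V}\left(M_{G\act{}v}\algtensor \Ww_v\right)$. So in this case there is nothing further to prove. Next I would reduce the remaining case to this one: if $E$ is finite then $E^0$ is a finite set, and since each orbit $G\act{}v$ is a subset of $E^0$, every such orbit is automatically finite. Hence the hypothesis of Proposition~\ref{matrix.decomposition}\eqref{item.tensor.3} is again met, and the same conclusion follows.

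The reason this step is short is that the genuine work has already been carried out in Proposition~\ref{matrix.decomposition}: the orthogonal decomposition $\Dd=\bigoplus_{v\in V}B_v$ established in part~\eqref{item.tensor.1}; the construction of the surjective homomorphisms $\psi_v\colon M_{G\act{}v}\algtensor \Ww_v\to B_v$ out of the matrix-unit and tensor-product structure in part~\eqref{item.tensor.2}; and the injectivity of each $\psi_v$, proved in part~\eqref{item.tensor.3} by showing that the kernel ideal $I\subseteq \Ww_v$ is trivial. I therefore expect no real obstacle at this stage. The only thing requiring verification is the elementary set-theoretic remark that finiteness of $E$ forces finiteness of each orbit $G\act{}v$, so that the two stated hypotheses of Theorem~\ref{thmb.new} collapse into the single hypothesis under which Proposition~\ref{matrix.decomposition}\eqref{item.tensor.3} applies.
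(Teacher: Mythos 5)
Your proposal is correct and matches the paper's own proof, which likewise deduces the theorem from Proposition~\ref{matrix.decomposition}\eqref{item.tensor.3} after noting that each orbit $G\act{}v$ is finite under either hypothesis. Your explicit remark that finiteness of $E$ forces finiteness of every orbit $G\act{}v\subseteq E^0$ is exactly the reduction the paper leaves implicit in its one-line proof.
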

\begin{proof}
Since each $G\act{}v, v\in V$ is finite (by assumption) the result follows from Proposition~\ref{matrix.decomposition}.
\end{proof}

\section{Proof of Theorem~\ref{thmb}{B}}
\label{section.three}

It was proved in \cite{MR3581326} that the $C^*$-algebra $\Oo_{G,E}$ is isomorphic to the groupoid $C^*$-algebra of the groupoid $\Gg_{\text{tight}}(\Ss_{G,E})$ as defined in Definition~\ref{def.GSEG}, cf.~\cite[Corollary~6.4]{MR3581326}. In this section we prove Theorem~\ref{thmb}{B}, which establishes an algebraic analogue of this $C^*$-algebraic result.

We need a number of preliminary results before proving the theorem. The proof of Theorem~\ref{thmb}{B} starts on page~\pageref{ThmB.proof}.


To make sense of the following Lemma~\ref{tech.lemma} we recall the notion of actions on the paths and on sets. Let $G$ be countable discrete group $G$, and $E$ a row-finite graph with no sources. An action $\sigma$ of $G$ on $E$ is a group homomorphism $g\mapsto \sigma_g$ from $G$ to the group of all automorphisms of $E$ (i.e., bijections $\sigma_g$ of $E^0\,\sqcup\, E^1$ such that $\sigma_g(E^i ) = E^i$, for $i=0,1$ and such that $r \circ \sigma_g = \sigma_g \circ r$, and $s \circ \sigma_g = \sigma_g \circ s$). An action $\sigma$ of $G$ on $E^*$ is a homomorphism $g\mapsto \sigma_g$ from $G$ to the group of bijections from $E^*$ to $E^*$. We often write $g\act{\alpha}$ instead of $\sigma_g(\alpha)$.
\begin{lemma}[{\cite[Proposition~2.4]{MR3581326}}]
\label{tech.lemma}
Let $(G,E,\varphi)$ be as in Notation~\ref{note}. Then $\sigma, \varphi$ extend to an action $\sigma \colon {G \times E^*}\to E^*$, $(g,\mu)\mapsto g\act{}\mu$ of $G$ on $E^*$ (viewed as a set) and a one-cocycle $\varphi \colon {G \times E^*}\to G$ for $\sigma$ such that:
\begin{enumerate}
\item $g\act{}(E^n)\subseteq E^n$,
\item $(gh)\act{}\alpha=g\act{}(h\act{}\alpha)$,
\item $\varphi(gh,\alpha)=\varphi(g, h\act{}\alpha)\varphi(h,\alpha)$, hence $\varphi(e_G,\alpha)=e_G$,
\item $\varphi(g,x)=g$,
\item $r(g\act{}\alpha)=g\act{}r(\alpha)$,
\item $s(g\act{}\alpha)=g\act{}s(\alpha)$,
\item $\varphi(g,\alpha)\act{}x=g\act{}x$,
\item \label{elfsim}$g\act{}(\alpha\beta)=(g\act{}\alpha)\varphi(g,\alpha)\act{}\beta$,
\item $\varphi(g,\alpha\beta)=\varphi(\varphi(g,\alpha),\beta)$.
\end{enumerate}
for all $g,h\in G$, $n\geq 0$, $x\in E^0$ and all $\alpha,\beta\in E^*$ with $s(\alpha)=r(\beta)$.
 \end{lemma}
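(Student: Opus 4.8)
The plan is to extend $\sigma$ and $\varphi$ from $E^1$ to $E^*$ by recursion on the length of a path, splitting off the \emph{first} edge, and then to verify (1)--(9) by induction, with the given edge/vertex data serving as base cases. Concretely, I set $g\act v\mydef \sigma_g(v)$ and $\varphi(g,v)\mydef g$ for $v\in E^0$, I retain the given data on $E^1$, and for a path $\alpha=e\alpha'$ of length $n\geq 1$ with first edge $e\in E^1$ and tail $\alpha'\in E^{n-1}$ (so that $r(\alpha')=s(e)$) I declare
\[
g\act\alpha\mydef (g\act e)\big(\varphi(g,e)\act\alpha'\big),\qquad \varphi(g,\alpha)\mydef \varphi\big(\varphi(g,e),\alpha'\big).
\]
Since every path of positive length has a unique such decomposition, this is a legitimate recursion provided the concatenation on the right is a genuine path.

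The first and only substantive point is this well-definedness, and it is where the standing axiom \eqref{axionGE} is indispensable. I would verify that $s(g\act e)=r\big(\varphi(g,e)\act\alpha'\big)$: the left-hand side equals $g\act s(e)$ because $\sigma_g$ is a graph automorphism, while the right-hand side equals $\varphi(g,e)\act r(\alpha')=\varphi(g,e)\act s(e)$, and \eqref{axionGE} identifies the latter with $g\act s(e)$. Hence the concatenation is a genuine path of length $n$, which simultaneously establishes (1).

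The remaining clauses then follow by routine inductions on $|\alpha|$. Property (4) is the definition; (5)--(6) are read off the outer edge, the source case reducing via the induction hypothesis to \eqref{axionGE} once more; and (7) is the inductive extension of \eqref{axionGE}, since $\varphi(g,\alpha)\act x=\varphi(\varphi(g,e),\alpha')\act x=\varphi(g,e)\act x=g\act x$. For the action law (2) and the cocycle law (3) I would run parallel inductions: writing $\alpha=e\alpha'$ and invoking the edge-level identities $(gh)\act e=g\act(h\act e)$ and $\varphi(gh,e)=\varphi(g,h\act e)\varphi(h,e)$, each assertion reduces on the tail $\alpha'$ to the same assertion one length shorter, after unpacking the definitions of $h\act\alpha$ and $\varphi(g,h\act\alpha)$. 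Finally (8) and (9) are nearly built into the definition: for $|\alpha|=1$ they \emph{are} the defining formulas, and for $\alpha=e\alpha'$ one writes $\alpha\beta=e(\alpha'\beta)$ and applies the induction hypothesis to $\alpha'$, recombining by the definition of $g\act\alpha$.

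I expect the sole genuine obstacle to be the well-definedness check of the second paragraph — verifying the self-similarity equation \eqref{elfsim} at the level of a single leading edge so that concatenation never leaves $E^*$ — since this is the one place the hypothesis \eqref{axionGE} does real work; everything afterwards is bookkeeping. A secondary point worth recording explicitly is that property (2), together with the easily checked facts $\sigma_{e_G}=\id$ on $E^*$ and $\varphi(e_G,\cdot)=e_G$, is exactly what upgrades the maps $\sigma_g$ from well-defined self-maps of $E^*$ to a bona fide action of $G$ by bijections, each $\sigma_g$ having inverse $\sigma_{g^{-1}}$.
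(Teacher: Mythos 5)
Your proposal is correct and takes essentially the same route as the source: the paper itself gives no proof of this lemma, quoting it from Exel--Pardo \cite[Proposition~2.4]{MR3581326}, whose argument is exactly this recursive extension of $\sigma$ and $\varphi$ by splitting off an edge, with the standing axiom \eqref{axionGE} supplying the composability $s(g\act{}e)=\varphi(g,e)\act{}s(e)=r(\varphi(g,e)\act{}\alpha')$ that makes the defining concatenation a genuine path. Your simultaneous induction on length, and your identification of that well-definedness check as the one place \eqref{axionGE} does real work (with (7) then absorbing the full strength of the axiom), match the cited treatment.
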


Recall that a semigroup $\Ss$ 
is an inverse semigroup if for each $s\in \Ss$ there is a unique $s^*$ such that $s^*=s^*ss^*$ and $s=ss^*s$. A zero in $\Ss$ is an element $0\in \Ss$ such that $0s=s0=0$ for all $s\in \Ss$.

\begin{defn}[{\cite[Definition~2.4]{MR3581326}}]
\label{semigroup}
Let $(G,E,\varphi)$ be as in Notation~\ref{note}. Define
$$\Ss_{G,E}\mydef \{(\alpha,g,\beta) \in E^*\times G\times E^* : s(\alpha)=g\act{}s(\beta)\}\cup\{0\}.$$
\end{defn}
The proof of {\cite[Proposition~4.3]{MR3581326}} shows that under the multiplication
\begin{equation*}
  (\alpha ,g,\beta ) (\gamma ,h,\delta ) = \left \{
  \begin{matrix}
  (\alpha g\act{}\varepsilon ,\hfil \varphi (g,\varepsilon )h,\hfil \delta ), & \text {if } \gamma =\beta \varepsilon , \cr
  (\alpha ,\ g\varphi (h^{-1},\varepsilon )^{-1},\ \delta (h^{-1})\act{}\varepsilon ), & \text {if } \beta =\gamma \varepsilon , \cr
  0,\hfil \hfil \hfil & \text {otherwise.}
  \end{matrix}
  \right.
\end{equation*}
$\Ss_{G,E}$ is an inverse semigroup, in which $(\alpha,g,\beta)^*=(\beta,g^{-1},\alpha)$ and where $0$ acts as a zero in $\Ss_{G,E}$.

Recall that an idempotent in a semigroup is an element $s$ such that $s^2=s$. A semilattice is a partially ordered set $X$ such that each pair $s,t\in X$ has a greatest lower bound $s\wedge t$. Using the order on $\Ss_{G,E}$ given by $s\leq  t \Leftrightarrow s=ts^*s$, the set
\begin{equation}
\label{idemlattice}
\Ee\mydef\{(\alpha, e_G, \alpha) : \alpha \in E^*\}\cup\{0\}
\end{equation}
of all idempotents in $\Ss_{G,E}$ is a semilattice of mutually commuting elements with $s\wedge t=st$.

Let $X$ be any partially ordered set with minimum element $0$. A filter in $X$ is a nonempty subset $\xi\subseteq X$, such that $0\notin\xi$, if $x\in\xi$ and $x\leq y$, then $y\in\xi$, and if $x,y\in\xi$, there exists $z\in\xi$, such that $z\leq x,y$.

\begin{lemma} [{\cite[Definition~10.1, Theorem~12.9, Proposition~19.11]{MR2419901}}]
\label{tight.specum}
Let $(G,E,\varphi)$ be as in Notation~\ref{note}. Then
\begin{enumerate}
\item The semicharacter space (or the Stone spectrum of $\Ee$)
\begin{equation*}
\label{semicharacterspace}
\widehat{\Ee}\mydef \{\varphi \colon \Ee\to \{0,1\}:  \varphi(st)=\varphi(s)\varphi(t), \varphi\neq 0\}
\end{equation*}
with respect to the topology of pointwise convergence\footnote{Equivalently the relative topology from the product
space $\{0, 1\}^\Ee$.}  is a locally compact Hausdorff topological space.
\item The character space $\widehat{\Ee}_0\mydef \{\varphi \in \widehat{\Ee}:  \varphi(0)=0\}$ is a closed (hence locally compact) subset of $\widehat{\Ee}$. There is a bijection from $\widehat{\Ee}_0$ to the space of filters on $\Ee$ given by
\begin{equation*}
\label{filters}
\varphi \mapsto \xi_\varphi\mydef \{x\in \Ee :\varphi(x)=1 \}.
\end{equation*}
The inverse of this map bijection is given by $\xi\mapsto \varphi_\xi\mydef1_{\{x\in \xi\}}$.

\item Given an infinite path $x=e_1e_2e_3\dots\in E^\infty$, we define $\Ff_x\subseteq \Ee$ by $\Ff_x \mydef \{(e_1\dots e_n, e_G, e_1\dots e_n) : n\geq 1\}\cup \{(r(e_1),e_G, r(e_1))\}$.
Then the space $\widehat\Ee_{\infty}\mydef \{\varphi \in \widehat{\Ee}_0: \xi_\varphi \ \text{is an ultra-filter}\}$ is homeomorphic to the space $E^\infty$ (equiped with the product topology) of one-sided infinite paths on $E$ via the map $x\mapsto 1_{\Ff_x}$.
\item The tight spectrum $\widehat\Ee_{\operatorname{tight}}$ \cite[Definition~12.8]{MR2419901} in $\widehat{\Ee}_0$ satisfies
$$\overline{\widehat\Ee_{\infty}}=\widehat\Ee_{\operatorname{tight}}.$$
\end{enumerate}
\end{lemma}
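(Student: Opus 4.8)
The plan is to treat parts (1), (2) and (4) as specializations of Exel's general Stone-type duality for a semilattice with zero (the cited \cite[Definition~10.1, Theorem~12.9, Proposition~19.11]{MR2419901}), and to concentrate the genuinely $(G,E)$-specific work in part (3), which identifies the ultrafilters on $\Ee$ with one-sided infinite paths. The first step is to record the order structure of $\Ee$. A direct computation with the multiplication in $\Ss_{G,E}$ (using that for idempotents $s\leq t$ reduces to $s=ts$) shows that the map $(\alpha,e_G,\alpha)\mapsto\alpha$ carries $\Ee\setminus\{0\}$ bijectively onto $E^*$ and turns the semilattice order $s\leq t\Leftrightarrow s=ts^*s$ into the \emph{reverse-prefix} order on paths: $(\beta,e_G,\beta)\leq(\alpha,e_G,\alpha)$ exactly when $\alpha$ is a prefix of $\beta$. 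The meet of two idempotents is then the longer path when one extends the other, and $0$ otherwise. This dictionary between $\Ee$ and $E^*$ is what makes the combinatorics in part (3) transparent.

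For (1) and (2) I would simply invoke the general theory. The semicharacter space $\widehat\Ee$ sits inside the compact Hausdorff space $\{0,1\}^\Ee$ as the set of nonzero maps satisfying the closed multiplicativity conditions $\varphi(st)=\varphi(s)\varphi(t)$; this is locally compact Hausdorff by \cite[Definition~10.1]{MR2419901}. The condition $\varphi(0)=0$ is closed, since evaluation at $0$ is continuous, so $\widehat\Ee_0$ is closed, hence locally compact. The bijection $\varphi\mapsto\xi_\varphi=\varphi^{-1}(1)$ with the set of filters, and its inverse $\xi\mapsto 1_\xi$, is the content of \cite[Theorem~12.9]{MR2419901}: upward closure of $\xi_\varphi$ follows from $\varphi(x)=\varphi(xy)$ whenever $x\leq y$, and downward directedness from $\varphi(xy)=\varphi(x)\varphi(y)$. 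These verifications are routine and I would not belabour them.

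The heart of the argument is (3). Using the dictionary above, a filter on $\Ee$ is an upward-closed, downward-directed set of paths; downward directedness forces any two members to be prefix-comparable, so a filter is exactly the set of all prefixes of paths along a single prefix-chain. Such a chain either has a longest element $\mu$, giving the principal filter of prefixes of $\mu$, or it has arbitrarily long members, giving the set of finite prefixes of a unique infinite path $x\in E^\infty$, namely $\Ff_x$. Because $E$ has no sources, every finite path $\mu$ extends to some $\mu e$, so a principal filter is properly contained in a strictly larger filter and is therefore \emph{never} an ultrafilter; conversely $\Ff_x$ is maximal, since any larger filter would have to contain a path $\nu$ prefix-comparable with every $e_1\cdots e_n$, and for $n>|\nu|$ this forces $\nu$ to be a prefix of $e_1\cdots e_n$, hence already in $\Ff_x$. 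This yields the set bijection $x\mapsto 1_{\Ff_x}$ between $E^\infty$ and $\widehat\Ee_\infty$. For the topology, $1_{\Ff_{x_i}}\to 1_{\Ff_x}$ pointwise means precisely that, for each finite path $\mu$, eventually $\mu$ is a prefix of $x_i$ if and only if it is a prefix of $x$; since the cylinder sets $\{y: e_1\cdots e_n\text{ is a prefix of }y\}$ form a basis for the product topology on $E^\infty$, this is exactly convergence in $E^\infty$, so the bijection is a homeomorphism.

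Finally, part (4) is immediate from part (3) together with Exel's general result \cite[Proposition~19.11]{MR2419901} that the tight spectrum of a semilattice with zero is the closure of its space of ultracharacters; since part (3) identifies $\widehat\Ee_\infty$ with the ultracharacters, $\overline{\widehat\Ee_\infty}=\widehat\Ee_{\operatorname{tight}}$ follows at once. The main obstacle throughout is the bookkeeping in (3): getting the direction of the reverse-prefix order right, and carefully using row-finiteness together with the no-sources hypothesis to pin down precisely which filters are ultrafilters and to match the pointwise topology with the product topology on $E^\infty$. Everything else is a direct appeal to the cited machinery.
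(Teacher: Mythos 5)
Your proposal is correct and takes essentially the same route as the paper, which supplies no proof of this lemma beyond the citations to \cite[Definition~10.1, Theorem~12.9, Proposition~19.11]{MR2419901} --- your treatment of (1), (2) and (4) is exactly that appeal to Exel's general theory. Your self-contained verification of (3) --- the dictionary sending $(\alpha,e_G,\alpha)$ to $\alpha$ with the reverse-prefix order, the classification of filters as prefix-chains, the use of the no-sources hypothesis (via $vE^1=r^{-1}(v)\neq\emptyset$, so every $\mu$ extends to $\mu e$ with $r(e)=s(\mu)$) to rule principal filters out of $\widehat\Ee_\infty$, and the matching of pointwise convergence of the characters $1_{\Ff_x}$ with cylinder-set convergence in $E^\infty$ --- is sound and consistent with the paper's conventions.
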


As the following shows, for our setting the tight spectrum $\widehat\Ee_{\operatorname{tight}}$ corresponds to the infinite path space. We refer to \cite[p.~1074]{MR3581326} and \cite[Proposition~5.12]{ExePar} for special cases of this result.

\begin{lemma}[{\cite[Proposition~4.1]{ExeParSta}}]
\label{Etight.is.Einfty}
Let $(G,E,\varphi)$ be as in Notation~\ref{note}. Then $\widehat\Ee_{\text{tight}}=\widehat\Ee_{\infty}$.
\end{lemma}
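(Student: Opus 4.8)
The plan is to bootstrap from part~(4) of Lemma~\ref{tight.specum}, which already records that $\overline{\widehat\Ee_{\infty}}=\widehat\Ee_{\operatorname{tight}}$. Thus the only thing left to prove is that $\widehat\Ee_{\infty}$ is \emph{already closed}. Since $\widehat\Ee_0$ is closed in $\widehat\Ee$, it is enough to show that $\widehat\Ee_{\infty}$ is closed in $\widehat\Ee_0$, or equivalently that the complement $\widehat\Ee_0\setminus\widehat\Ee_{\infty}$ is open in the topology of pointwise convergence. So I would fix an arbitrary $\varphi\in\widehat\Ee_0\setminus\widehat\Ee_{\infty}$ and produce an open neighbourhood of $\varphi$ that misses $\widehat\Ee_{\infty}$ entirely.

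First I would analyse the filter $\xi_\varphi$ attached to $\varphi$ by part~(2) of Lemma~\ref{tight.specum}. Using the order $s\leq t\Leftrightarrow s=ts^*s$ on $\Ss_{G,E}$, a direct computation in $\Ee$ shows $(\alpha,e_G,\alpha)\leq(\beta,e_G,\beta)$ exactly when $\beta$ is an initial segment of $\alpha$. Because a filter is downward directed, any two of its elements admit a common lower bound, i.e.\ a common extension; hence their underlying paths are comparable, and $\xi_\varphi$ is a \emph{chain} of initial segments. If $\xi_\varphi$ contained initial segments of unbounded length it would be exactly $\Ff_x$ for the infinite path $x$ obtained as their union, and by part~(3) that would force $\varphi\in\widehat\Ee_{\infty}$. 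Since $\varphi\notin\widehat\Ee_{\infty}$, the chain is finite and therefore has a maximal element $(\alpha,e_G,\alpha)$ for some $\alpha\in E^*$; concretely, $\varphi(\alpha,e_G,\alpha)=1$ while $\varphi(\alpha e,e_G,\alpha e)=0$ for every edge $e$ extending $\alpha$.

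Now I would set $U=\{\psi\in\widehat\Ee_0:\psi(\alpha,e_G,\alpha)=1\text{ and }\psi(\alpha e,e_G,\alpha e)=0\text{ for all }e\in s(\alpha)E^1\}$. The set $s(\alpha)E^1$ is finite because $E$ is row-finite, so $U$ is a finite intersection of subbasic clopen sets and hence open; and $\varphi\in U$ by maximality of $\alpha$. The crucial disjointness $U\cap\widehat\Ee_{\infty}=\varnothing$ uses the no-sources hypothesis: if $\psi=\varphi_{\Ff_x}\in\widehat\Ee_{\infty}$ had $\psi(\alpha,e_G,\alpha)=1$, then $\alpha$ is an initial segment of $x$, so $x$ continues with an edge $e$ satisfying $r(e)=s(\alpha)$, i.e.\ $e\in s(\alpha)E^1$, whence $\alpha e$ is again an initial segment of $x$ and $\psi(\alpha e,e_G,\alpha e)=1$, contradicting $\psi\in U$. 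Therefore $\varphi\notin\overline{\widehat\Ee_{\infty}}$, the complement is open, and combining with part~(4) gives $\widehat\Ee_{\operatorname{tight}}=\overline{\widehat\Ee_{\infty}}=\widehat\Ee_{\infty}$.

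I expect the main obstacle to be the filter bookkeeping in the middle paragraph: verifying that directedness forces $\xi_\varphi$ to be a chain and that a non-ultrafilter character must possess a maximal path, so that $\alpha$ is well defined. Everything after that is essentially forced, with row-finiteness guaranteeing that the neighbourhood $U$ is open and no-sources guaranteeing that every infinite path compatible with $\alpha$ is genuinely excluded from $U$. The topological step (closedness of $\widehat\Ee_0$ in $\widehat\Ee$, continuity of the evaluation maps) is routine and can be invoked without detailed computation.
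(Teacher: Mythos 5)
Your argument is correct, and it differs from the paper in one essential respect: the paper offers no proof of Lemma~\ref{Etight.is.Einfty} at all, simply quoting it from \cite[Proposition~4.1]{ExeParSta}, whereas you derive it self-containedly from the facts already recorded in Lemma~\ref{tight.specum}. The individual steps all check out: two idempotents of $\Ee$ have nonzero product only when one underlying path is a prefix of the other, so directedness forces $\xi_\varphi$ to be a chain; if the lengths in the chain are unbounded, upward closedness gives $\xi_\varphi=\Ff_x$, and the bijection of Lemma~\ref{tight.specum}(3) places $\varphi$ in $\widehat\Ee_\infty$; otherwise there is a longest path $\alpha$ (note this is the \emph{minimum} of $\xi_\varphi$ in the semilattice order $s\le t\Leftrightarrow s=ts^*s$ --- it is maximal only with respect to prefix extension, so the wording deserves a touch-up), and your set $U$ is open by row-finiteness, contains $\varphi$, and misses $\widehat\Ee_\infty$, giving closedness of $\widehat\Ee_\infty$ in $\widehat\Ee_0$ and hence, via Lemma~\ref{tight.specum}(2) and (4), the equality $\widehat\Ee_{\operatorname{tight}}=\overline{\widehat\Ee_\infty}=\widehat\Ee_\infty$. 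Conceptually your $U$ is exactly the set of characters violating tightness at $(\alpha,e_G,\alpha)$ relative to the finite cover $\{(\alpha e,e_G,\alpha e): e\in s(\alpha)E^1\}$, so your closure computation and the direct verification in \cite{ExeParSta} (that every tight filter must pick up a member of each such cover, forcing it to be of the form $\Ff_x$) are dual formulations of the same combinatorial fact; what your route buys, for a reader of this paper, is that nothing beyond the quoted Lemma~\ref{tight.specum} is needed. One attribution slip, harmless but worth correcting: in the disjointness step the edge continuing $x$ past $\alpha$ exists because $x$ is \emph{infinite}, not because $E$ has no sources. The no-sources hypothesis is consumed upstream, in Lemma~\ref{tight.specum}(3), where it guarantees that the ultrafilter characters are exactly the $1_{\Ff_x}$ with $x\in E^\infty$; if $E$ had a source, the prefixes of a finite path ending there would form an ultrafilter with bounded chain and your dichotomy in the middle paragraph would fail at the step ``$\varphi\notin\widehat\Ee_\infty$ implies the chain is bounded.''
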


Let $(G,E,\varphi)$ be as in Notation~\ref{note}. Recall that the canonical action of $g\in G$ on $x=e_1e_2\dots \in E^\infty$ is given by $x\mapsto g\act{}x$ where $g\act{}x$ is the unique infinite path such that $(g\act{}x)_1\dots(g\act{}x)_n=g\act{}(e_1\dots e_n)$ for all $n$. Identifying $\widehat\Ee_{\infty}\cong E^\infty$ the action of $\Ss_{G,E}$ on $E^\infty$ is given as follows: each $s=(\alpha,g,\beta)\in \Ss_{G,E}$ acts on elements of $Z(\beta)\mydef \{\beta x :  x\in s(\beta)E^\infty\}$ by $s\act{}(\beta x)\mydef\alpha (g\act{} x)$.
\begin{defn}[{\cite[Definition 4.6]{MR2419901}}]
\label{def.GSEG}
Let $(G,E,\varphi)$ be as in Notation~\ref{note}. We let $\Gg_{\text{tight}}(\Ss_{G,E})$ be the groupoid of germs of the action of $\Ss_{G,E}$ on $\widehat\Ee_{\operatorname{tight}}\cong\widehat\Ee_{\infty}\cong E^\infty$.
\end{defn}
As a set $\Gg_{\text{tight}}(\Ss_{G,E})$ is given by
\begin{equation}
\label{groupoid(G,E)}
\Gg_{\text{tight}}(\Ss_{G,E})\mydef \{[(\alpha,g,\beta), x]: (\alpha,g,\beta)\in \Ss_{G,E}, x\in Z(\beta)\},
\end{equation}
where for $s,t\in\Ss_{G,E}$, $[s,x]=[t,y]$ if and only if $x=y$ and there exists nonzero $e=(\gamma,e_G,\gamma)\in \Ee$ such that $e\act{}x=x$ (or equivalently $x\in Z(\gamma)$) and $se=te$. The unit space
$$\Gg_{\text{tight}}(\Ss_{G,E})^{(0)}=\{[(\alpha,e_G,\alpha), x]:  x\in Z(\alpha)\},$$
is identified with $E^\infty$ via$[(\alpha,e_G,\alpha), x]\mapsto x$, and the source and range maps are given by $s([(\alpha,g,\beta), x])=x$ and $r([(\alpha,g,\beta), \beta y])=\alpha (g\act{}y)$. The groupoid $\Gg_{\text{tight}}(\Ss_{G,E})$ is equiped with a topology making it locally compact and ample with a Hausdorff unit space, cf.~\cite[Proposition~4.14]{MR2419901}.

\begin{defn}
\label{Steinberg}
Let $\Gg$ be a locally compact and ample groupoid with Hausdorff unit space and $R$ a unital commutative $^*$-ring. The \emph{Steinberg algebra} $A_R(\Gg)$ is defined as $$A_R(\Gg)\mydef\espan_R\{1_{B} : B \ \text{is a compact open bisection}\}\subseteq R^\Gg.$$
We endow $A_R(\Gg)$ with pointwise  addition. Multiplication and adjoint are given by
$fg(\gamma)\mydef \sum_{\alpha\beta=\gamma}f(\alpha)g(\beta)$ and $f^*(\gamma)\mydef f(\gamma^{-1})^*$.
\end{defn}
When $\Gg$ is Haurdorff $A_R(\Gg)$ is just the $^*$-algebra of locally constant functions with compact support from $\Gg$ to $R$.

Let $(G,E,\varphi)$ be as in Notation~\ref{note}. By \cite[Proposition~4.18]{MR2419901}, for each $s\in \Ss_{G,E}$ the set $\Theta_s\mydef \{[s, x]: x\in Z(\beta)\}$ is a compact open bisection and such sets form a basis for the topology on $\Gg_{\text{tight}}(\Ss_{G,E})$. It follows that
$$A_R(\Gg_{\text{tight}}(\Ss_{G,E}))=\espan_R\{1_{\Theta_s} : s\in \Ss_{G,E}\}.$$

We now prove that every Exel--Pardo $^*$-algebra admits an homomorphism into a Steinberg algebra.
\begin{prop}
\label{nonzero}
Let $(G,E,\varphi)$ be as in Notation~\ref{note}. Let $R$ be a unital commutative $^*$-ring. Then there exists a unique $^*$-homomorphism $\pi_{G,E}\colon \Ll_R(G,E) \to A_R(\Gg_{\text{tight}}(\Ss_{G,E}))$ such that
\begin{eqnarray*}
\pi_{G,E}(p_{v,f})=1_{\Theta_{(v,f,f^{-1}\act{}v)}} \ \ \text{and} \ \ \pi_{G,E}(s_{e,g})=1_{\Theta_{(e,g,g^{-1}\act{}s(e))}},
\end{eqnarray*}
for any $v\in E^0$, $e\in E^1$ and $f,g\in G$. In particular, the generators $\{p_{v,f}, s_{e,g}\}$ of $\Ll_R(G,E)$ are nonzero.
\end{prop}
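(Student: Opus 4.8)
The plan is to produce $\pi_{G,E}$ by invoking the universal property of $\Ll_R(G,E)$ (Theorem~\ref{thm.univarsal}): I will exhibit a $(G,E)$-family in $A_R(\Gg_{\text{tight}}(\Ss_{G,E}))$ whose entries are the prescribed indicator functions. Concretely, for $v\in E^0$, $e\in E^1$ and $f,g\in G$ set
$$P_{v,f}\mydef 1_{\Theta_{(v,f,f^{-1}\act{}v)}}, \qquad S_{e,g}\mydef 1_{\Theta_{(e,g,g^{-1}\act{}s(e))}}.$$
Both triples lie in $\Ss_{G,E}$ (the source conditions reduce to $f\act{}(f^{-1}\act{}v)=v$ and $g\act{}(g^{-1}\act{}s(e))=s(e)$), and each $\Theta_s$ is a compact open bisection, so these elements belong to $A_R(\Gg_{\text{tight}}(\Ss_{G,E}))$. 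Once the family axioms are checked, Theorem~\ref{thm.univarsal} delivers the desired $^*$-homomorphism and its uniqueness at once, since $p_{v,f},s_{e,g}$ generate $\Ll_R(G,E)$.

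The engine of the verification is the identity, valid for all $s,t\in\Ss_{G,E}$,
$$1_{\Theta_s}^*=1_{\Theta_{s^*}}\qquad\text{and}\qquad 1_{\Theta_s}\,1_{\Theta_t}=1_{\Theta_{st}},$$
together with $1_{\Theta_0}=0$; that is, $s\mapsto 1_{\Theta_s}$ converts semigroup inverses and products into algebra adjoints and products. The first identity is immediate from $\Theta_{s^*}=(\Theta_s)^{-1}$ and the formula $f^*(\gamma)=f(\gamma^{-1})^*$. For the second I would first compute, directly from the germ description of $\Gg_{\text{tight}}(\Ss_{G,E})$, that $\Theta_s\Theta_t=\Theta_{st}$ as subsets of the groupoid: a germ lies in $\Theta_s\Theta_t$ iff it has the form $[st,x]$ with $x\in\operatorname{dom}(t)$ such that $t$ carries $x$ into $\operatorname{dom}(s)$, and this is precisely the domain condition $x\in\operatorname{dom}(st)$ defining $\Theta_{st}$. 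Since $\Theta_s$ and $\Theta_t$ are bisections, each groupoid element admits at most one factorization through them, so the convolution $1_{\Theta_s}1_{\Theta_t}$ collapses to $1_{\Theta_s\Theta_t}$; crucially this argument uses only the bisection property and therefore survives the possibly non-Hausdorff setting.

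Granting this, checking relations (b)--(e) of Definition~\ref{def.LGE} becomes a sequence of multiplications in $\Ss_{G,E}$. For instance $(v,f,f^{-1}\act{}v)(e,g,g^{-1}\act{}s(e))$ is nonzero precisely when $r(e)=f^{-1}\act{}v$, i.e.\ $v=r(f\act{}e)$, and then equals $(f\act{}e,\varphi(f,e)g,g^{-1}\act{}s(e))$, which yields relation (d) after matching third coordinates via $s(f\act{}e)=f\act{}s(e)$ and the axiom $\varphi(f,e)\act{}s(e)=f\act{}s(e)$ of \eqref{axionGE}; relations (b), (c), (e) follow by the same bookkeeping, using that vertices are length-$0$ paths acting as identities for concatenation and that $\varphi(g,x)=g$ for $x\in E^0$ (Lemma~\ref{tech.lemma}(4)). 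For the $E$-family axiom (a), the identity $S_{e,e_G}^*S_{f,e_G}=\delta_{e,f}P_{s(e),e_G}$ and orthogonality of the $P_{v,e_G}$ again reduce to products in $\Ss_{G,E}$; the Cuntz--Krieger relation \eqref{ref12} is the subtler point: one has $S_{e,e_G}S_{e,e_G}^*=1_{\Theta_{(e,e_G,e)}}$, supported on the compact open cylinder $Z(e)\subseteq E^\infty$ inside the unit space, and since $E$ is row-finite with no sources the identification $\widehat\Ee_{\text{tight}}\cong E^\infty$ gives the finite disjoint decomposition $Z(v)=\bigsqcup_{e\in vE^1}Z(e)$, whence $P_{v,e_G}=\sum_{e\in vE^1}S_{e,e_G}S_{e,e_G}^*$.

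Finally, for the nonvanishing claim: each relevant triple has the form $(\alpha,h,\beta)$ with $\beta$ a vertex, and $\Theta_{(\alpha,h,\beta)}$ contains the germs $[(\alpha,h,\beta),x]$ for $x\in Z(\beta)$, a nonempty set because having no sources guarantees an infinite path based at every vertex, so $Z(\beta)\neq\emptyset$. Hence $P_{v,f}$ and $S_{e,g}$ attain the value $1\neq0$ of the unital ring $R$ somewhere on $\Gg_{\text{tight}}(\Ss_{G,E})$ and are nonzero; as $\pi_{G,E}$ sends the universal generators onto them, the generators $p_{v,f},s_{e,g}$ are nonzero. I expect the main obstacle to be twofold: establishing the product identity $1_{\Theta_s}1_{\Theta_t}=1_{\Theta_{st}}$ cleanly in the non-Hausdorff groupoid, and the careful case analysis of the $\Ss_{G,E}$-multiplication (especially the interplay of the cocycle $\varphi$ with the axiom \eqref{axionGE}) needed to match every coordinate in relations (b)--(e).
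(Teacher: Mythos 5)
Your proposal is correct and follows essentially the same route as the paper: exhibit the prescribed indicator functions as a $(G,E)$-family, using $(1_{\Theta_s})^*=1_{\Theta_{s^*}}$ and $1_{\Theta_s}1_{\Theta_t}=1_{\Theta_{st}}$ to translate the verification into multiplications in $\Ss_{G,E}$, and then invoke the universal property of Theorem~\ref{thm.univarsal}. The only differences are cosmetic: the paper cites \cite[Proposition~7.4]{MR2419901} and \cite[Proposition~4.5(3)]{MR2565546} for the two key identities where you prove them directly from the germ and bisection structure (your argument is sound in the non-Hausdorff setting), and you spell out the nonvanishing via $Z(\beta)\neq\emptyset$, which the paper leaves implicit.
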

\begin{proof}
Using \cite[Proposition~7.4]{MR2419901} and \cite[Proposition~4.5(3)]{MR2565546} we see that $1_{\Theta_s}1_{\Theta_t}=1_{\Theta_{st}}$ and $(1_{\Theta_s})^*=1_{\Theta_{s}^{-1}}=1_{\Theta_{s^*}}$ for $s,t\in \Ss_{G,E}$. A tedious but straightforward computation using Lemma~\ref{tech.lemma} shows that the elements
$$P_{v,f}\mydef 1_{\Theta_{(v,f,f^{-1}\act{}v)}} \ \ \text{and} \ \ S_{e,g}\mydef 1_{\Theta_{(e,g,g^{-1}\act{}s(e))}}$$
form a $(G,E)$-family in the Steinberg algebra $A_R(\Gg_{\text{tight}}(\Ss_{G,E}))$. The result now follows from the universal property of $\Ll_R(G,E)$ (Theorem~\ref{thm.univarsal}).
\end{proof}

\begin{remark}
\label{the.remark}
We do not assert that the map of Proposition~\ref{nonzero} is injective.
In the unital case, an interesting approach was presented in \cite[Theorem~6.4]{ClaExePar} intending to show that the map $\pi_{G,E}$ in Proposition~\ref{nonzero} is an isomorphism, but we believe this argument is valid only for $R=\CC$. The idea was to build an inverse
$$\phi\colon A_R(\Gg_{\text{tight}}(\Ss_{G,E}))\to \Oo_{(G,E)}^\text{alg}(R)$$
using the construction of a map $\psi\colon C^*(\Gg_{\text{tight}}(\Ss_{G,E}))\to C^*_{\text{tight}}(\Ss_{G,E})$ from the proof of \cite[Theorem~2.4]{MR2644910}. We outline relevant details:

Let $\Ss$ be an inverse semigroup with zero. A representation $\sigma$ of $\Ss$ in a unital $C^*$-algebra $A$ is a zero- and $^*$-preserving multiplicative map $$\sigma:\Ss\to A.$$ If $A=B(H)$, then $\sigma$ is called a representation of $\Ss$ on $H$ (\footnote{The zero-preserving property does not appear in \cite[Definition~10.4]{MR2644910}, but presumably $\sigma_0=0$ was intended for inverse semigroups with $0$.}). For $e\in \Ee\mydef \{s\in \Ss: s^2=s\}$, we define $D_e\mydef\{\varphi \in \widehat\Ee : \varphi(e)=1\}$. This is a clopen subset of $\widehat\Ee$. For each representation $\sigma\colon \Ss\to B(H)$ there exists a unique $^*$-homomorphism $\pi_\sigma\colon C_0(\widehat\Ee) \to B(H)$ such that for each  $e\in \Ee$, $\pi_\sigma(1_{D_e})=\sigma_e$ (\cite[Proposition~10.6]{MR2419901}). It is a tight representation precisely if $\pi_\sigma$ vanishes on
$C_0(\widehat\Ee\setminus\smallspace \widehat\Ee_{\operatorname{tight}})$ (\cite[Theorem~13.2]{MR2419901}). The tight $C^*$-algebra of $\Ss$, denoted $C^*_{\text{tight}}(\Ss)$, is defined as the universal $C^*$-algebra generated by a universal tight representation $\pi_u$ of $\Ss$, so any tight representation $\pi$ of $\Ss$ in a unital $C^*$-algebra $A$ induces a unital $^*$-homomorphism $$\psi\colon C^*_{\text{tight}}(\Ss)\to A$$ such that $\psi\circ \pi_u=\pi$. Viewing $C^*_{\text{tight}}(\Ss)$ as an algebra of operators on a Hilbert space $H$ via some faithful representation, $\pi_u\colon \Ss \to C^*_{\text{tight}}(\Ss)$ may be regarded as a tight representation of $\Ss$ on $H$. Since $\pi_u$ is tight $\pi_{(\pi_u)}$ factors through $C_0(\widehat\Ee_{\operatorname{tight}})$ giving a representation $\pi$ of  $C_0(\widehat\Ee_{\operatorname{tight}})$ on $H$. Denoting $R:C_0(\widehat\Ee) \to C_0(\widehat\Ee_{\operatorname{tight}})$  for the restriction map, we get the following commuting diagram:
\begin{displaymath}
  \xymatrix{
    {C_0(\widehat\Ee)} \ar[rr]^{\pi_{(\pi_u)}} \ar[dr]^{R}
    && {B(H)}\\
    & {C_0(\widehat\Ee_{\operatorname{tight}}).} \ar[ur]^{\pi}
    &
  }
\end{displaymath}
Define $\Gg\mydef \Gg_{\text{tight}}(\Ss_{G,E})$ and $D_e^{\widehat\Ee_{\text{tight}}}\mydef\{\varphi \in \widehat\Ee_{\text{tight}} : \varphi(e)=1\}$. Since the collection $\{\Theta_s: s\in \Ss_{G,E}\}$ is a basis for the topology on $\Gg$ we may define $\pi_u \times \pi\colon C_c(\Gg)\to B(H)$ via
\begin{align*}
(\pi_u \times \pi)(1_{\Theta_s})&\mydef\pi_u(s)\pi(1_{D_{s^*s}^{\widehat\Ee_{\text{tight}}}})\\
&=\pi_u(s)\pi_{\pi_u}(1_{D_{s^*s}})=\pi_u(s)\pi_u(s^*s)=\pi_u(s).
\end{align*}
and extend by linearity. The map $\pi_u \times \pi$ is well-defined (see \cite[Lemma~8.4]{MR2419901} but we will give a sketch below) and norm decreasing (see the proof of \cite[Theorem~8.5]{MR2419901}). Its range is contained in $C^*_{\text{tight}}(\Ss)$, so it extends to a homomorphism $\psi\colon C^*(\Gg_{\text{tight}}(\Ss_{G,E}))\to C^*_{\text{tight}}(\Ss_{G,E})$.
Restricting this map to the Steinberg algebra and identifying its image we obtain
$$\phi\colon A_\CC(\Gg_{\text{tight}}(\Ss_{G,E}))\to \Oo_{(G,E)}^\text{alg}(\CC).$$

To see why $\pi_u \times \pi$ is well-defined, let $\tilde\pi$ denote the weakly continuous extension of $\pi$ to the algebra $\Bb(\widehat\Ee)$ of all bounded Borel measurable functions on $\widehat\Ee$.
Let $s\colon \Gg\to \Gg^{(0)}$ be the source map and fix $\xi, \eta\in H$. For each $s\in \Ss_{G,E}$ a finite Borel measure $\mu_s\mydef\mu_{s,\xi, \eta}$ on $\Theta_s$ is defined (in the proof of \cite[Lemma~8.4]{MR2419901}) by
$$\mu_s(A)\mydef \langle \pi_u(s)\tilde\pi(1_{s(A)})\xi,\eta\rangle $$
for every Borel measurable $A\subseteq \Theta_s$. Fix $f\mydef\sum_{s\in J} c_s1_{\Theta_s} \in C_c(\Gg)$ with $c_s\in \CC$ and $|J|<\infty$. Let $M \mydef \bigcup_{s\in J} \Theta_s$ and let $\mu$ be a measure on $M$ such that $\mu(A)=\mu_s(A)$ for every $s\in \Ss_{G,E}$ and every measurable $A\subseteq \Theta_s$. Since
$$\Big\langle \sum_{s\in J} c_s\pi_u(s)\xi,\eta\Big\rangle=\int_Mf d\mu,$$
it follows that if $f=0$ in $C_c(\Gg)$ then $\sum_{s\in J} c_s\pi_u(s)=0$ in $B(H)$. Therefore $\pi_u \times \pi$ is well-defined. The point is that constructing the inverse $\phi$ uses Hilbert space arguments. As such it is less clear if the map $\pi_{G,E}$ in Proposition~\ref{nonzero} has an inverse for $R\neq \CC$.
\end{remark}

When $\Gg_{\text{tight}}(\Ss_{G,E})$ is Hausdorff the result of Proposition~\ref{nonzero} can be substantially improved, resulting in Theorem~\ref{thmb}{B}. Before giving the proof of Theorem~\ref{thmb}{B} we need two preliminary lemmas.

\begin{lemma}
\label{lem.tU}
Let $(G,E,\varphi)$ be as in Notation~\ref{note}. Let $R$ be a unital commutative $^*$-ring. Let $\Bb$ be the set of compact open bisections of $\Gg_{\text{tight}}(\Ss_{G,E})$.
Let $$\Jj\mydef \left\{J\subseteq \Ss_{G,E} : |J|<\infty, \bigsqcup_{s\in J} \Theta_s=\bigcup_{s\in J} \Theta_s \in \Bb\right\}.$$
Then
\begin{enumerate}
\item For each $U\in \Bb$ there exists $J\in \Jj$ such that $U=\bigsqcup_{s\in J} \Theta_s$.
\item For each $J\in \Jj$ define $t^J\mydef \sum_{(\alpha,g,\beta)\in J} s_{\alpha,g}s_{\beta,e_G}^*$. Then
\begin{align*}
\forall \ I,J\in \Jj:\ \  \bigsqcup_{s\in J}\Theta_{s}=\bigsqcup_{s\in I}\Theta_{s} \ \Longrightarrow \  t^J=t^I.
\end{align*}
\item For each $U\in \Bb$ there exists a unique $t_U\in \Ll_R(G,E)$ such that $t_U=\sum_{(\alpha,g,\beta)\in J} s_{\alpha,g}s_{\beta,e_G}^*$ whenever $U=\bigsqcup_{s\in J}\Theta_s$ and $|J|<\infty$.
\end{enumerate}
\end{lemma}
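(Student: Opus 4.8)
The plan is to link the geometry of $\Gg_{\text{tight}}(\Ss_{G,E})$ to the algebra $\Ll_R(G,E)$ through a single refinement identity, and then to prove parts (1)--(3) essentially in that order. First I would establish, for each $(\alpha,g,\beta)\in\Ss_{G,E}$ and each $n\geq 0$, the identity
$$s_{\alpha,g}s_{\beta,e_G}^* = \sum_{\tau\in s(\beta)E^n}s_{\alpha(g\act\tau),\varphi(g,\tau)}s_{\beta\tau,e_G}^*$$
in $\Ll_R(G,E)$, where $s(\beta)E^n$ denotes the set of paths $\tau$ of length $n$ with $r(\tau)=s(\beta)$. For $n=1$ this follows by inserting $p_{s(\beta),e_G}=\sum_{e\in s(\beta)E^1}s_{e,e_G}s_{e,e_G}^*$ and using the product rule $s_{\alpha,g}s_{e,e_G}=s_{\alpha(g\act e),\varphi(g,e)}$ (a short computation from Definition~\ref{def.LGE}\ref{GET.d}, \eqref{small.2}, and $s_{\alpha,e_G}s_{\mu,h}=s_{\alpha\mu,h}$); the general case follows by induction, collapsing the iterated expressions via the self-similarity and cocycle identities of Lemma~\ref{tech.lemma}. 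Geometrically this is exactly $\Theta_{(\alpha,g,\beta)}=\bigsqcup_{\tau}\Theta_{(\alpha(g\act\tau),\varphi(g,\tau),\beta\tau)}$, the decomposition induced by $Z(\beta)=\bigsqcup_\tau Z(\beta\tau)$.

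For part (1), I would transport the problem to the unit space. Since $U\in\Bb$ is a bisection, the source map restricts to a homeomorphism of $U$ onto the compact open set $s(U)\subseteq E^\infty$ (using that $s$ is a local homeomorphism and that $E^\infty$ is Hausdorff, so $s(U)$ is clopen). As the cylinders form a basis of compact open sets for $E^\infty$, the set $s(U)$ is a finite union of cylinders; extending them to a common length $N$ and discarding repeats writes $s(U)=\bigsqcup_{i}Z(\mu_i)$ as a disjoint union of length-$N$ cylinders, each $Z(\mu_i)$ contained in $s(\Theta_{s_i})$ for some basic $\Theta_{s_i}\subseteq U$. Pulling back along $s|_U$, the piece $U_i\mydef\{u\in U: s(u)\in Z(\mu_i)\}$ is the restriction of $\Theta_{s_i}$ to $Z(\mu_i)$, hence equals $\Theta_{t_i}$ for the corresponding restriction $t_i$ of $s_i$, and $U=\bigsqcup_i\Theta_{t_i}$ with $J\mydef\{t_i\}\in\Jj$. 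Notably this uses only that $U$ itself is a bisection, so it requires no Hausdorffness of the groupoid.

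Part (2) is the crux. Given $I,J\in\Jj$ with $\bigsqcup_{s\in J}\Theta_s=\bigsqcup_{s\in I}\Theta_s=U$, I would refine both to a common length $N$ at least the maximum length of all source-paths in $I\cup J$; the length-$N$ cylinders contained in $s(U)$ depend only on $s(U)$, so applying the refinement identity to every term gives $t^J=t^{J'}$ and $t^I=t^{I'}$ with $J'$ and $I'$ indexed by the same set of length-$N$ sources $\nu$. For each such $\nu$ the matching terms are elements $(\gamma,k,\nu)$ and $(\gamma',k',\nu)$ defining the \emph{same} bisection $s|_U^{-1}(Z(\nu))$, so it remains to prove the key claim: if $(\gamma,k,\nu),(\gamma',k',\nu)\in\Ss_{G,E}$ satisfy $\Theta_{(\gamma,k,\nu)}=\Theta_{(\gamma',k',\nu)}$, then $s_{\gamma,k}s_{\nu,e_G}^*=s_{\gamma',k'}s_{\nu,e_G}^*$. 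Analysing germ equality at a point $x=\nu y$ produces a prefix $\tau$ of $y$ with $\gamma=\gamma'$, $k\act\tau=k'\act\tau$ and $\varphi(k,\tau)=\varphi(k',\tau)$; the cylinders $Z(\nu\tau)$ so obtained cover the compact set $Z(\nu)$, so finitely many suffice, and extending to their common length $n$ (using Lemma~\ref{tech.lemma} to see the compatibility is stable under lengthening $\tau$) gives $k\act\tau'=k'\act\tau'$ and $\varphi(k,\tau')=\varphi(k',\tau')$ for \emph{every} $\tau'\in s(\nu)E^n$. Feeding this into the refinement identity makes the two expansions agree term by term, proving the claim; summing over $\nu$ yields $t^{J'}=t^{I'}$ and hence $t^J=t^I$.

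Part (3) is then immediate: by (1) there is at least one $J\in\Jj$ with $U=\bigsqcup_{s\in J}\Theta_s$, so set $t_U\mydef t^J$; any finite $J$ with $U=\bigsqcup_{s\in J}\Theta_s$ automatically lies in $\Jj$, and (2) forces $t^J$ to equal this common value, giving both existence and uniqueness. The main obstacle is the key claim in part (2): the germ relation only gives agreement of $k,k'$ and of the cocycle values \emph{locally}, at each point through its own prefix, and the real work is to promote this pointwise agreement to a uniform statement over a single length $n$ by compactness of $Z(\nu)$, so that the algebraic refinement identity can be applied. I expect the bookkeeping with $\varphi$ and the action on paths---verifying that $k\act\tau=k'\act\tau$ and $\varphi(k,\tau)=\varphi(k',\tau)$ are preserved when $\tau$ is extended---to be the most delicate point, though it follows cleanly from Lemma~\ref{tech.lemma}.
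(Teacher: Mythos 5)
Your proposal is correct, and for parts (1) and (3) it runs essentially parallel to the paper (the paper disjointifies the finite cover of $U$ by noting that the basic sets $\Theta_t$, $t\in J$, are pairwise nested or disjoint because their sources are cylinders and $U$ is a bisection, then discards superfluous ones; your pullback of a common-length cylinder partition of $s(U)$ is an equivalent variant). At the crux of part (2), however, you take a genuinely different route --- and your route is the more robust one. The paper also refines both partitions to the common source length $n\mydef\max_{(\alpha,g,\beta)\in J\cup I}|\beta|$, but then concludes directly that, since the source cylinders match and $U$ is a bisection, the refined index sets satisfy $I'=J'$. That deduction is too fast: two distinct triples $(\gamma,k,\nu)\neq(\gamma,k',\nu)$ can define the \emph{same} bisection $\Theta_{(\gamma,k,\nu)}=\Theta_{(\gamma,k',\nu)}$ --- this is exactly the strongly-fixed-path phenomenon of Section~\ref{section.four}. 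For instance, at the vertex $w$ of Example~\ref{exp.2} all the sets $\Theta_{(w,m,w)}$, $m\in\ZZ$, coincide, so $J=\{(w,0,w)\}$ and $I=\{(w,1,w)\}$ are two one-element partitions of the same $U$ whose refinements at level $n=0$ are unequal; the conclusion $t^J=t^I$ (here $p_{w,0}=p_{w,1}$) is still true, but it needs an algebraic argument, not just the bisection property. Your key claim --- that $\Theta_{(\gamma,k,\nu)}=\Theta_{(\gamma',k',\nu)}$ already forces $s_{\gamma,k}s_{\nu,e_G}^*=s_{\gamma',k'}s_{\nu,e_G}^*$ --- is precisely the missing bridge, and your proof of it is sound: germ equality at each point $\nu y$ produces an idempotent $(\nu\tau,e_G,\nu\tau)$ with $\gamma(k\act{}\tau)=\gamma'(k'\act{}\tau)$, whence (comparing lengths) $\gamma=\gamma'$, $k\act{}\tau=k'\act{}\tau$ and $\varphi(k,\tau)=\varphi(k',\tau)$; these conditions are stable under lengthening $\tau$ by Lemma~\ref{tech.lemma}(8)--(9); compactness of $Z(\nu)$, together with row-finiteness and the no-sources hypothesis (so that every $\tau'\in s(\nu)E^n$ extends one of the finitely many $\tau_i$), promotes the pointwise agreement to a single uniform level $n$; and the refinement identity then matches the two expansions term by term. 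This is the same kind of germ analysis the paper itself only develops later, in Lemma~\ref{Um=Uniff}(2)--(3) and Lemma~\ref{with110paths.new}(2),(7), to handle non-Hausdorff groupoids; your argument imports it into Lemma~\ref{lem.tU}, where it genuinely belongs, since the paper applies this lemma in the non-Hausdorff setting (e.g., in Lemma~\ref{with110paths.new}(8)). In short: same skeleton (finite cover by basic bisections, disjointification, refinement to a common length), but your treatment of the decisive step supplies an argument the paper's own proof is missing rather than merely rephrasing it.
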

\begin{proof}
(1) For $s=(\alpha, g, \beta)\in \Ss_{G,E}$,  $\Theta_s\mydef \{[(\alpha,g,\beta), x]: x\in Z(\beta)\}$ is a compact open bisection and such sets form a basis for the topology on $\Gg\mydef\Gg_{\text{tight}}(\Ss_{G,E})$. We prove that for each $U\in \Bb$ there exists a finite $J\subseteq  \Ss_{G,E}$ such that $U=\bigsqcup_{s\in J} \Theta_s$ is a disjoint union:
Compactness of $U$ gives $U=\bigcup_{t\in J} \Theta_t$ for some finite $J\subseteq \Ss_{G,E}$. Since $s(\Theta_{(\alpha,g,\beta)})=Z(\beta)$, any two sets from $\{s(\Theta_t): t\in J$\} are either disjoint or contained in one another. Since $U$ is a bisection $s|_U$ is a homeomorphism, so any two sets from $\{\Theta_t: t\in J$\} are either disjoint or contained in one another. Now remove the superfluous $t\in J$.

(2) To ease terminology, we view $J\in \Jj$ as a partition of $U$ via  the equality $U=\bigsqcup_{s\in J}\Theta_s$. We claim that for two partitions $I,J$ of the same $U\in \Bb$ we have $t^J=t^I$. The prove strategy is first to show that if $\{\Theta_{(\alpha, g, \beta)} : (\alpha, g, \beta)\in J\}$ is a partition of $U\in \Bb$ and if $n\geq \max_{(\alpha, g, \beta)\in J}|\beta|$ then the refinement $$J'\mydef \{(\alpha g\act{}\lambda, \varphi(g,\lambda), \beta\lambda) : (\alpha,g,\beta)\in J, \lambda\in s(\beta)E^{(n-|\beta|)}\}$$ of $U$ satisfies $t^J=t^{J'}$. One then shows that if $J,I$ are two partitions of the same $U\in \Bb$, then for large enough $n$ the refinements $J',I'$ just described above are equal.

Following this strategy, take any nonempty $U\in \Bb$ and a partition $U=\bigsqcup_{s\in J} \Theta_s$ of $U$. Fix any
$$n\geq\max_{(\alpha, g, \beta)\in J}|\beta|.$$
For each $r=(\alpha, g, \beta)\in J$ and each $m\in \NN$ calculations
%
%
show that
\begin{align*}
\Theta_r&=\Theta_{r}\Theta_{r^*r}=\Theta_{r}\Theta_{(\beta, e_G,\beta)}\\
&=\Theta_{r}\bigsqcup_{\lambda\in s(\beta)E^m}\Theta_{(\beta\lambda, e_G,\beta\lambda)}=\bigsqcup_{\lambda\in s(\beta)E^m}\Theta_{(\alpha g\act{}\lambda, \varphi(g,\lambda), \beta\lambda)}
\end{align*}
and similarly
\begin{align*}
s_{\alpha,g}s_{\beta,e_G}^*&=\sum_{\lambda\in s(\beta)E^m}s_{\alpha g\act{}\lambda, \varphi(g,\lambda)}s_{\beta\lambda,e_G}^*.
\end{align*}
Hence $t^{\{r\}}=t^{\{{(\alpha g\act{}\lambda, \varphi(g,\lambda), \beta\lambda) : \lambda\in s(\beta)E^m\}}}$. In particular, we also have that $t^{\{r\}}=t^{\{{(\alpha g\act{}\lambda, \varphi(g,\lambda), \beta\lambda) : \lambda\in s(\beta)E^{(n-|\beta|)}\}}}$. With
$$J'\mydef \{(\alpha g\act{}\lambda, \varphi(g,\lambda), \beta\lambda) : (\alpha,g,\beta)\in J, \lambda\in s(\beta)E^{(n-|\beta|)}\}$$ it follows that
$$t^J=\sum_{s\in J}t^{\{s\}}=\sum_{s\in J}t^{\{(\alpha g\act{}\lambda, \varphi(g,\lambda), \beta\lambda): \, s=(\alpha,g,\beta), \lambda\in s(\beta)E^{(n-|\beta|)}\}}=t^{J'}.$$

Now suppose $U=\bigsqcup_{s\in I} \Theta_s$ is another partition of $U$. Let $n\mydef \max_{(\alpha, g, \beta)\in J\cup I}|\beta|$. By the above, $J'\mydef \{(\alpha g\act{}\lambda, \varphi(g,\lambda), \beta\lambda) : (\alpha,g,\beta)\in J, \lambda\in s(\beta)E^{(n-|\beta|)}\}$ and $I'\mydef \{(\gamma h\act{}\lambda, \varphi(h,\lambda), \delta\lambda) : (\gamma ,h,\delta )\in I, \lambda\in s(\delta)E^{(n-|\delta|)}\}$ satisfy $t^J=t^{J'}$ and $t^I=t^{I'}$.
We have
$$s(U)=\bigsqcup_{\{\nu\in E^n : (\mu, f, \nu)\in J'\}}Z(\nu)=\bigsqcup_{\{\nu\in E^n : (\mu, f, \nu)\in I'\}}Z(\nu).$$
Hence $\{\nu : (\mu, f, \nu)\in J'\}=\{\nu : (\mu, f, \nu)\in I'\}$. Since $U$ is a bisection, we deduce that $I'=J'$, hence $t^J=t^{J'}=t^{I'}=t^I$.

(3) For each $U\in \Bb$ use (1) to find $I\in \Jj$ such that $U=\bigsqcup_{s\in I} \Theta_s$. The desired result now follows from property (2).
\end{proof}

Let $\Gg$ be a locally compact, ample groupoid with a Hausdorff unit space. Let $\Bb$ be the family of all compact open bisections of $\Gg$. Let $R$ be a unital commutative $^*$-ring and let $B$ be a $^*$-algebra over $R$. For Hausdorff groupoids (regarded as carrying the trivial grading), \cite[Definition 3.10]{MR3274831}(\footnote{Condition (R3) in \cite{MR3274831} is missing ``such that $B \cup D$ is a bisection''.}) defines a \emph{representation of $\Bb$} in $B$ as a family $\{t_U: U\in \Bb\}\subseteq B$ satisfying
\begin{enumerate}\renewcommand{\theenumi}{R\arabic{enumi}}
\item\label{item:zero} $t_\emptyset = 0$;
\item\label{item:multiplicative} $t_Ut_V = t_{UV}$ for all $U,V \in \Bb$; and
\item\label{item:additive} $t_U + t_V = t_{U \cup V}$ whenever $U$ and $V$ are disjoint elements of $\Bb$ such that $U \cup V$ is a bisection.
\end{enumerate}

For non-Hausdorff $\Gg$, it is not clear that this is an appropriate definition of a representation, but we will nevertheless want to refer to \eqref{item:zero}--\eqref{item:additive} in this context.

\begin{lemma}
\label{rep.tU}
Let $(G,E,\varphi)$ be as in Notation~\ref{note}. Let $R$ be a unital commutative $^*$-ring. Then the family $\{t_U : U\in \Bb\}$ of Lemma~\ref{lem.tU} satisfies \eqref{item:zero}--\eqref{item:additive}.
\end{lemma}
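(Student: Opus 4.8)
The plan is to reduce all three conditions to a single fact: that the assignment $T\colon\Ss_{G,E}\to\Ll_R(G,E)$ defined by $T(\alpha,g,\beta)\mydef s_{\alpha,g}s_{\beta,e_G}^*$ and $T(0)\mydef 0$ is a $^*$-preserving semigroup homomorphism, i.e. $T(st)=T(s)T(t)$ and $T(s^*)=T(s)^*$ for all $s,t\in\Ss_{G,E}$. Note first that Lemma~\ref{lem.tU}(3) reads $t_U=\sum_{s\in J}T(s)$ whenever $U=\bigsqcup_{s\in J}\Theta_s$ with $J$ finite, so everything is expressed through $T$. Granting multiplicativity of $T$, conditions \eqref{item:zero} and \eqref{item:additive} are quick. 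For \eqref{item:zero}, $\emptyset=\bigsqcup_{s\in\emptyset}\Theta_s$ is an empty disjoint union, so $t_\emptyset$ is an empty sum and equals $0$. For \eqref{item:additive}, choose via Lemma~\ref{lem.tU}(1) finite $I,J\subseteq\Ss_{G,E}$ with $U=\bigsqcup_{s\in I}\Theta_s$ and $V=\bigsqcup_{t\in J}\Theta_t$; since $E$ has no sources we have $\Theta_s\neq\emptyset$ for every $s\neq 0$, so $U\cap V=\emptyset$ forces $I\cap J=\emptyset$, whence $\{\Theta_s:s\in I\cup J\}$ partitions $U\cup V\in\Bb$ and $t_{U\cup V}=\sum_{s\in I\cup J}T(s)=t_U+t_V$.

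Condition \eqref{item:multiplicative} is the substantive combinatorial point. Using that the product of two compact open bisections is again a compact open bisection (a standard fact about ample groupoids) together with $\Theta_s\Theta_t=\Theta_{st}$ as in Proposition~\ref{nonzero}, I would write $U=\bigsqcup_{s\in I}\Theta_s$, $V=\bigsqcup_{t\in J}\Theta_t$ and observe $UV=\bigcup_{s\in I,\,t\in J}\Theta_{st}$. Because $U$ and $V$ are bisections, every $\gamma\in UV$ factors uniquely as $\gamma=\gamma_1\gamma_2$ with $\gamma_1\in U$ and $\gamma_2\in V$ (the range map pins down $\gamma_1$, then $\gamma_2$ is forced), so the nonempty sets among the $\Theta_{st}$ are pairwise disjoint and indexed injectively by the elements $st\neq 0$. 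Thus $UV=\bigsqcup_{(s,t):\,st\neq 0}\Theta_{st}$ is a genuine partition, and Lemma~\ref{lem.tU}(3) together with multiplicativity of $T$ yields $t_{UV}=\sum_{(s,t):\,st\neq 0}T(st)=\sum_{s\in I,\,t\in J}T(s)T(t)=t_Ut_V$, the pairs with $st=0$ contributing $T(0)=0$.

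It therefore remains to prove that $T$ is a $^*$-preserving homomorphism, which is where the real work lies. The heart of the matter is the identity $s_{\alpha,g}s_{\varepsilon,h}=s_{\alpha(g\act\varepsilon),\,\varphi(g,\varepsilon)h}$, valid whenever $s(\alpha)=r(g\act\varepsilon)$, which I would prove by induction on $|\varepsilon|$: the base case $|\varepsilon|=1$ is relation~\ref{GET.d} after writing $s_{\alpha,g}=s_{\alpha,e_G}p_{s(\alpha),g}$, and the inductive step is driven by the self-similarity and cocycle identities of Lemma~\ref{tech.lemma}\eqref{elfsim}. That $T$ is $^*$-preserving follows from \eqref{small.2}, \eqref{small.3} and relation~\ref{GET.e} extended to paths, giving $T(\alpha,g,\beta)^*=s_{\beta,g^{-1}}s_{\alpha,e_G}^*=T(\beta,g^{-1},\alpha)$. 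Multiplicativity then splits according to the three cases of the product in $\Ss_{G,E}$: when $\gamma=\beta\varepsilon$, the $E$-family orthogonality relations give $s_{\beta,e_G}^*s_{\gamma,h}=s_{\varepsilon,h}$, and the key identity completes the computation to $T(st)$; the case $\beta=\gamma\varepsilon$ follows from the first by applying $^*$ and using $(st)^*=t^*s^*$; and in the remaining case the incomparability of $\beta$ and $\gamma$ gives $s_{\beta,e_G}^*s_{\gamma,e_G}=0$, so $T(s)T(t)=0=T(0)$. I expect the main obstacle to be exactly this bookkeeping across the three cases, where one must track the cocycle $\varphi$ and the induced action on paths carefully; the groupoid-geometric steps in \eqref{item:zero}, \eqref{item:additive} and the disjointness argument for \eqref{item:multiplicative} are comparatively routine once $T$ is known to be multiplicative.
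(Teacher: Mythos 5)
Your proposal is correct and follows essentially the same route as the paper's proof: (R1) via the empty partition, (R3) by concatenating the two finite partitions (disjointness of the index sets being automatic since $\Theta_s\neq\emptyset$ for $s\neq 0$ when $E$ has no sources), and (R2) by decomposing $UV$ into the products $\Theta_s\Theta_t=\Theta_{st}$, whose pairwise disjointness comes from $U$ and $V$ being bisections. The only difference is presentational: where the paper simply asserts $t_{\Theta_s}t_{\Theta_t}=t_{\Theta_{st}}$ ``using the relations in $\Ll_R(G,E)$,'' you package this as a $^*$-preserving semigroup homomorphism $T\colon\Ss_{G,E}\to\Ll_R(G,E)$ and prove it by induction on $|\varepsilon|$ across the three cases of the product in $\Ss_{G,E}$ --- a correct and worthwhile expansion of the one step the paper leaves implicit.
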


\begin{proof}
Property (R1) follows from $t_\emptyset = t^{\emptyset}=0$.

For (R3), take any pair of disjoint sets $U,V \in \Bb$ such that $U \cup V$ is a bisection. Choose $J,I\in \Jj$ such that $U=\bigsqcup_{s\in J}\Theta_s$ and $V=\bigsqcup_{s\in I}\Theta_s$. Then $t_U=t^J$ and $t_V=t^I$. Since $U\sqcup V=\bigsqcup_{s\in I\sqcup J}\Theta_s$ we get $t^{I\sqcup J}=t_{U\sqcup V}$, so $$t_U+t_V= t^{I}+t^{J}=t^{I\sqcup J}=t_{U\sqcup V}.$$

Finally we consider (R2). Fix any $U=\bigsqcup_{s\in J} \Theta_s$, $V=\bigsqcup_{t\in I} \Theta_t$ in $\Bb$.
If $(s,t)$, $(s',t')$ are distinct elements of $J\times I$, then $\Theta_{s}\Theta_{t}\cap \Theta_{s'}\Theta_{t'}=\emptyset$ because both $U$ and $V$ are bisections. So $UV=\bigsqcup_{s\in J,  t\in I}\Theta_s\Theta_t$. Since $\Theta_{s}\Theta_{t}=\Theta_{st}$ for any $s,t\in\Ss_{G,E}$ (see \cite[Proposition~7.4]{MR2419901}), we get $UV=\bigsqcup_{s\in J,  t\in I}\Theta_{st}$. Using the relations in $\Ll_R(G,E)$ we have $t_{\Theta_{s}}t_{\Theta_{t}}=t_{\Theta_{st}}$ for any $s\in J, t\in I$. Hence
$$t_{UV}=\sum_{s\in J, t\in I}t_{\Theta_{st}}=\sum_{s\in J, t\in I}t_{\Theta_{s}}t_{\Theta_{t}}=\Big(\sum_{s\in J}t_{\Theta_{s}}\Big)\Big(\sum_{t\in I}t_{\Theta_{t}}\Big)=t_Ut_V.\qedhere$$
\end{proof}

Having constructed a homomorphism from the Exel--Pardo $^*$-algebra to the Steinberg algebra of $\Gg_{\text{tight}}(\Ss_{G,E})$ and a representation of $\Bb$ in $\Ll_R(G,E)$ we can now prove Theorem~\ref{thmb}{B}.

\begin{proof}[Proof of Theorem~\ref{thmb}{B}]
\label{ThmB.proof}
Let $\Bb$ denote the set of compact open bisections of $\Gg_{\text{tight}}(\Ss_{G,E})$. By Lemma~\ref{rep.tU} the family $\{t_U : U\in \Bb\}$ of Lemma~\ref{lem.tU} is a representation of $\Bb$ in $\Ll_R(G,E)$. So \cite[Proposition~2.3]{MR3372123} shows that $\pi_{G,E}$ is an $R$-algebra isomorphism. One then verify the map is $^*$-preserving (cf.~\cite[Theorem~3.11]{MR3274831}).
\end{proof}

\begin{remark}
\label{rem.pi.map}
In Theorem~\ref{thmb}{B} the inverse map $\pi^{-1}_{G,E}$ from $A_R(\Gg_{\text{tight}}(\Ss_{G,E}))$ into $\Ll_R(G,E)$ satisfies
\begin{equation}
\label{defofpiinv}
\pi^{-1}_{G,E}(1_{\Theta_{(\alpha, g, \beta)}})=t_{\Theta_{(\alpha, g, \beta)}}, \ \ \text{where}\ \ t_{\Theta_{(\alpha, g, \beta)}}\mydef s_{\alpha,g}s_{\beta,e_G}^*.\end{equation}
We remark that for \emph{any} triple $(G,E,\varphi)$ as in Notation~\ref{note}, $\pi_{G,E}$ is a $^*$-isomorphism if and only if \eqref{defofpiinv} extends by linearity to a well-defined $R$-linear map on $A_R(\Gg_{\text{tight}}(\Ss_{G,E}))$.
\end{remark}

Due to work in \cite{ExeParSta} it is known when the groupoid $\Gg_{\text{tight}}(\Ss_{G,E})$ is Hausdorff. We recall the relevant terminology.
A path $\alpha\in E^*$ is \emph{strongly fixed} by $g\in G$ if $g\act{}\alpha=\alpha$ and $\varphi(g,\alpha)=e_G$. In addition if no prefix (i.e., initial segment) of $\alpha$ is strongly fixed by $g$ we say $\alpha$ is a \emph{minimal} \emph{strongly fixed} path for $g$ (\cite[Definition~5.2]{MR3581326}).

\begin{prop}[{\cite[Theorem~4.2]{ExeParSta}}]\label{iff.Hausdorff}
\label{when.hausdorff}
Let $(G,E,\varphi)$ be as in Notation~\ref{note}. Then the following properties are equivalent:
\begin{enumerate}
\item \label{iff.Hausdorff.1} For every $g\in G$, and every $v\in E^0$ there are at most finitely many minimal strongly fixed paths for $g$ with range $v$.
\item \label{iff.Hausdorff.2}  $\Gg_{\text{tight}}(\Ss_{G,E})$ is Hausdorff.
\end{enumerate}
\end{prop}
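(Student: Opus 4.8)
The plan is to reduce Hausdorffness of $\Gg := \Gg_{\text{tight}}(\Ss_{G,E})$ to a combinatorial condition by exploiting that, as recorded after Definition~\ref{def.GSEG}, $\Gg$ is a locally compact ample groupoid with Hausdorff unit space. For such groupoids the standard criterion is that $\Gg$ is Hausdorff if and only if $\Gg^{(0)}$ is closed in $\Gg$. Since $\{\Theta_s : s\in\Ss_{G,E}\}$ is a basis of compact open bisections and each $\Theta_s$ is open in $\Gg$, closedness of $\Gg^{(0)}$ is equivalent to closedness of $\Gg^{(0)}\cap\Theta_s$ inside $\Theta_s$ for every $s$. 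Under the source homeomorphism $\Theta_{(\alpha,g,\beta)}\cong Z(\beta)$, I would identify this intersection with the set $U_s\subseteq Z(\beta)$ of points $x$ at which the germ $[s,x]$ is a unit, so that the whole argument comes down to deciding when each $U_s$ is closed.

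Next I would compute $U_s$ explicitly. A germ $[(\alpha,g,\beta),\beta y]$ is a unit precisely when there is an idempotent $(\beta\lambda,e_G,\beta\lambda)\in\Ee$ with $\beta y\in Z(\beta\lambda)$ and $(\alpha,g,\beta)(\beta\lambda,e_G,\beta\lambda)=(\beta\lambda,e_G,\beta\lambda)$. Using the explicit multiplication on $\Ss_{G,E}$ recorded after Definition~\ref{semigroup} together with Lemma~\ref{tech.lemma}\eqref{elfsim}, the left-hand product equals $(\alpha(g\act\lambda),\varphi(g,\lambda),\beta\lambda)$, so the germ is a unit iff $\alpha(g\act\lambda)=\beta\lambda$ and $\varphi(g,\lambda)=e_G$. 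Comparing lengths forces $\alpha=\beta$ and $g\act\lambda=\lambda$, i.e.\ $\lambda$ is strongly fixed by $g$ with $r(\lambda)=s(\beta)$. Hence $U_s=\emptyset$ unless $\alpha=\beta$, in which case
\[
U_{(\beta,g,\beta)}=\bigsqcup_{\lambda}Z(\beta\lambda),
\]
where $\lambda$ runs over the minimal strongly fixed paths for $g$ with $r(\lambda)=s(\beta)$ (the cylinders being disjoint by minimality). As $\beta$ ranges over all finite paths and $g$ over $G$, the vertex $s(\beta)$ ranges over all of $E^0$, so condition \eqref{iff.Hausdorff.1} is exactly the assertion that each of these index sets is finite.

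The remaining ingredient is a topological lemma about unions of cylinders. If, for a given $g$ and $v=s(\beta)$, there are only finitely many minimal strongly fixed paths, then $U_s$ is a finite union of compact open cylinders, hence compact and closed. For the converse I would show that an infinite, pairwise-incomparable family $\{Z(\beta\lambda)\}$ is never closed: since $E$ is row-finite with no sources, a pigeonhole/K\"onig argument produces an infinite path $z$ each of whose finite prefixes is extended by infinitely many of the $\beta\lambda$. Choosing points in the $Z(\beta\lambda)$ that agree with $z$ on ever-longer prefixes exhibits $z\in\overline{U_s}$; yet $z\notin U_s$, for if some $\beta\lambda$ were a prefix of $z$ then, by the infinitary choice, a distinct $\beta\lambda'$ would extend it, contradicting incomparability. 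Thus $U_s$ fails to be closed and $\Gg$ is non-Hausdorff. Assembling the three steps gives: $\Gg$ is Hausdorff $\iff$ every $U_s$ is closed $\iff$ every index set of minimal strongly fixed paths is finite $\iff$ \eqref{iff.Hausdorff.1}, which also matches \eqref{iff.Hausdorff.2}.

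I expect the technical heart to lie in the second and third paragraphs. The delicate point is pinning down exactly when a germ is trivial---in particular that $\alpha=\beta$ is forced and that triviality is governed precisely by strongly fixed initial segments---and then executing the K\"onig's-lemma argument cleanly, taking care that the incomparability coming from minimality is what obstructs closedness. The reduction in the first paragraph is standard once one invokes the ample, Hausdorff-unit-space structure already established, and the finite-union direction is routine; everything else is bookkeeping with the formulas of Lemma~\ref{tech.lemma} and the multiplication in $\Ss_{G,E}$.
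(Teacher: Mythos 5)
The paper itself contains no proof of Proposition~\ref{when.hausdorff}: it is imported verbatim from \cite[Theorem~4.2]{ExeParSta}, so there is no internal argument to compare against. Your proposal is, as far as I can check, a correct self-contained proof, and it is essentially an unpacking of the cited source's argument rather than a different theorem. The algebraic core is identical: your computation that $(\alpha,g,\beta)(\beta\lambda,e_G,\beta\lambda)=(\alpha(g\act{}\lambda),\varphi(g,\lambda),\beta\lambda)$ is idempotent precisely when $\alpha=\beta$ (forced by comparing lengths), $g\act{}\lambda=\lambda$ and $\varphi(g,\lambda)=e_G$ is exactly the identification, used by Exel--Pardo--Starling following \cite[Section~12]{MR3581326}, of the idempotents below $(\beta,g,\beta)$ in $\Ss_{G,E}$ with strongly fixed prefixes. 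Where you genuinely differ is in the topological packaging: the cited route feeds this computation into a general Hausdorffness criterion for groupoids of germs (the down-set of idempotents below each $s$ must be generated by finitely many idempotents), whereas you argue directly that Hausdorffness of the ample groupoid with Hausdorff unit space is equivalent to $\Gg_{\text{tight}}(\Ss_{G,E})^{(0)}$ being closed, localise this over the basic bisections $\Theta_s$ (legitimate, since closedness is local on an open cover and $s|_{\Theta_s}$ is a homeomorphism onto $Z(\beta)$), and then prove by hand, via K\"onig's lemma, that an infinite pairwise-incomparable family of cylinders is never closed. This makes visible exactly where the standing hypotheses enter: row-finiteness gives finite branching of the tree $vE^*$, no-sources gives nonempty cylinders and hence approximating points, and minimality of the strongly fixed paths is precisely what yields both the disjointness of the cylinders and the incomparability that excludes the limit point $z$ from $U_s$.

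Two steps deserve to be spelled out rather than asserted. First, in characterising unit germs you only consider idempotents of the form $(\beta\lambda,e_G,\beta\lambda)$; an arbitrary witness $(\mu,e_G,\mu)$ with $\beta y\in Z(\mu)$ could have $\mu$ a strict prefix of $\beta$, and one should note that replacing it by its meet with $(\beta,e_G,\beta)$ preserves the germ relation (the point $\beta y$ stays in the domain), so the restriction is without loss of generality; in that degenerate case $se=s$, which is idempotent only when $g=e_G$, consistent with your formula via $\lambda=s(\beta)$. Second, in the converse direction you need $(v,g,v)$ to actually lie in $\Ss_{G,E}$, i.e.\ $g\act{}v=v$; this is automatic, since the existence of even one strongly fixed path with range $v$ forces $g\act{}v=v$ (and when $g\act{}v\neq v$ condition \eqref{iff.Hausdorff.1} holds vacuously, with no corresponding units to worry about). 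With those two lines added, your argument is complete and correct.
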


Combining Proposition~\ref{when.hausdorff} and Theorem~\ref{thmb}{B} we get:

\begin{cor}
Let $(G,E,\varphi)$ be as in Notation~\ref{note}. Let $R$ be a unital commutative $^*$-ring. Suppose that for every $g\in G$, and every $v\in E^0$ there are at most finitely many minimal strongly fixed paths for $g$ with range $v$. Then
$$\Ll_R(G,E)\cong A_R(\Gg_{\text{tight}}(\Ss_{G,E})).$$
\end{cor}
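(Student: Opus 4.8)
The plan is to chain together the two immediately preceding results, so the argument reduces to a one-line deduction. First I would observe that the hypothesis of the corollary---that for every $g\in G$ and every $v\in E^0$ there are at most finitely many minimal strongly fixed paths for $g$ with range $v$---is verbatim condition~\eqref{iff.Hausdorff.1} of Proposition~\ref{when.hausdorff}. Invoking the equivalence of \eqref{iff.Hausdorff.1} and \eqref{iff.Hausdorff.2} supplied by that proposition, I would conclude that $\Gg_{\text{tight}}(\Ss_{G,E})$ is Hausdorff.

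With Hausdorffness established, I would then apply Theorem~\ref{thmb}{B} directly: since $\Gg_{\text{tight}}(\Ss_{G,E})$ is Hausdorff, the canonical $^*$-homomorphism $\pi_{G,E}\colon \Ll_R(G,E)\to A_R(\Gg_{\text{tight}}(\Ss_{G,E}))$ of Proposition~\ref{nonzero} is a $^*$-isomorphism. This immediately yields the asserted isomorphism $\Ll_R(G,E)\cong A_R(\Gg_{\text{tight}}(\Ss_{G,E}))$, completing the argument.

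Because the corollary is purely a composition of two ready-made statements, I do not anticipate any substantive obstacle. The only point requiring (trivial) care is to confirm that the hypothesis is phrased in precisely the form demanded by Proposition~\ref{when.hausdorff}, which it is. All the genuine content has already been discharged in the proof of Theorem~\ref{thmb}{B}---through the representation $\{t_U : U\in \Bb\}$ constructed in Lemmas~\ref{lem.tU} and~\ref{rep.tU} together with the appeal to \cite[Proposition~2.3]{MR3372123}---and in the cited Proposition~\ref{when.hausdorff} drawn from \cite{ExeParSta}.
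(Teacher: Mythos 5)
Your proposal is correct and is exactly the paper's argument: the corollary is stated immediately after the sentence ``Combining Proposition~\ref{when.hausdorff} and Theorem~\ref{thmb}{B} we get,'' so the intended proof is precisely the two-step composition you describe, with no additional content. Nothing further is needed.
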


\subsection{Simplicity and pure infiniteness for $\Ll_R(G,E)$}

Having a Steinberg algebra realisation of Exel--Pardo $^*$-algebras we can use known results on Steinberg algebras to say something about $\Ll_R(G,E)$. In particular the results in \cite{MR3581326, ExeParSta} apply to our setting giving the two propositions below.

The terminology used in Proposition~\ref{simple.LGE} and Proposition~\ref{pi.LGE} was introduced in \cite{MR3581326}. More specifically, for the definition of a weakly-$G$-transitive directed graph $E$, the notion of a $G$-circuit in $E$ having an entry and the definiton of a group element $g\in G$ being slack at a vertex $v$, see \cite[Definition~13.4]{MR3581326}, \cite[Definition~14.4]{MR3581326} and \cite[Definition~14.9]{MR3581326} respectively.

\begin{prop}[{cf.~\cite[Theorem 4.5]{ExeParSta}}]
\label{simple.LGE}
Let $(G,E,\varphi)$ be as in Notation~\ref{note}. Let $R$ be a unital commutative $^*$-ring. Suppose that for every $g\in G$, and every $v\in E^0$ there are at most finitely many minimal strongly fixed paths for $g$ with range $v$. Then $\Ll_R(G,E)$ is simple if and only if $R$ is simple and
\begin{enumerate}
\item the graph $E$ is weakly-$G$-transitive;
\item every $G$-circuit has an entry; and
\item for every vertex $v$, and every $g\in G$ that fixes $Z(v)$ pointwise, $g$ is
slack at $v$.
\end{enumerate}
\end{prop}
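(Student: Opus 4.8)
The plan is to reduce simplicity of $\Ll_R(G,E)$ to simplicity of a Steinberg algebra and then to import the dynamical characterisation already developed in the $C^*$-setting. First I would observe that the standing hypothesis of the proposition --- that for every $g\in G$ and every $v\in E^0$ there are only finitely many minimal strongly fixed paths for $g$ with range $v$ --- is exactly condition \eqref{iff.Hausdorff.1} of Proposition~\ref{when.hausdorff}, so that the groupoid $\Gg\mydef\Gg_{\text{tight}}(\Ss_{G,E})$ is Hausdorff. Theorem~\ref{thmb}{B} then provides a $^*$-isomorphism $\Ll_R(G,E)\cong A_R(\Gg)$. As simplicity is an invariant of the underlying $R$-algebra, it suffices to determine precisely when the Steinberg algebra $A_R(\Gg)$ is simple.

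For the Hausdorff ample groupoid $\Gg$ I would then invoke the general simplicity theorem for Steinberg algebras (the ``existing machinery'' of \cite{AdelaR}): $A_R(\Gg)$ is simple if and only if $R$ is simple, $\Gg$ is minimal, and $\Gg$ is effective. The coefficient-ring hypothesis matches directly, so the entire remaining content of the proposition is the assertion that $\Gg$ is minimal and effective exactly when conditions (1)--(3) hold.

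The core of the argument is thus the translation of these two purely groupoid-theoretic properties into the graph-and-group data of $(G,E,\varphi)$. Here I would appeal to the analysis underlying the $C^*$-algebraic simplicity result \cite[Theorem~4.5]{ExeParSta}, together with \cite{ExePar,MR3581326}: minimality of $\Gg$ is equivalent to $E$ being weakly-$G$-transitive (condition~(1)), while effectiveness of $\Gg$ is equivalent to the conjunction of every $G$-circuit having an entry (condition~(2)) and every $g\in G$ that fixes $Z(v)$ pointwise being slack at $v$ (condition~(3)). The decisive point is that minimality and effectiveness refer only to the topological groupoid $\Gg$ and are blind to the coefficient ring, so the equivalences established at groupoid level in \cite{ExeParSta} transfer verbatim to our algebraic setting.

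The step I expect to be the main obstacle is the effectiveness translation. Minimality is a transparent condition on $G$-orbits of the infinite-path space, but effectiveness concerns the interior of the isotropy bundle, and its relationship to the strongly fixed paths is more delicate; moreover it is precisely here that the Hausdorff hypothesis is used. I would therefore be careful to cite the exact groupoid-level statements of \cite{ExeParSta}, and to confirm that the standing assumptions there (in particular Hausdorffness, which we secured above) coincide with ours, so that the characterisations of minimality and effectiveness may be imported without re-derivation.
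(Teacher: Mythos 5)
Your proposal is correct and is essentially the paper's intended argument: the paper gives no separate proof of this proposition, justifying it instead by exactly the chain you describe --- the standing finiteness hypothesis yields Hausdorffness via Proposition~\ref{when.hausdorff}, Theorem~B gives $\Ll_R(G,E)\cong A_R(\Gg_{\text{tight}}(\Ss_{G,E}))$, the simplicity criterion for Steinberg algebras of Hausdorff ample groupoids ($R$ simple, $\Gg$ minimal and effective) applies, and the groupoid-level translations of minimality and effectiveness into conditions (1)--(3) are imported from \cite{ExeParSta, MR3581326}. Your one imprecision is attributing the Steinberg-algebra simplicity theorem itself to \cite{AdelaR} (a $C^*$-algebraic reference the paper lists among its ``existing machinery''); the algebraic criterion you need is the known groupoid-algebra simplicity result, but this does not affect the structure or validity of your argument.
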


\begin{prop}[{cf.~\cite[Theorem 4.7]{ExeParSta}}]
\label{pi.LGE}
Let $(G,E,\varphi)$ be as in Notation~\ref{note}. Let $R$ be a unital commutative $^*$-ring. Suppose that for every $g\in G$, and every $v\in E^0$ there are at most finitely many minimal strongly fixed paths for $g$ with range $v$ and that $\Ll_R(G,E)$ is simple. If $E$ contains at least one $G$-circuit, then $\Ll_R(G,E)$ is purely infinite (simple).
\end{prop}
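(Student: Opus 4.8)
The plan is to transport the problem to the Steinberg algebra and there produce a single infinite idempotent, after which simplicity does the rest. First I would use the standing finiteness hypothesis on minimal strongly fixed paths to invoke Proposition~\ref{when.hausdorff}, concluding that $\Gg \mydef \Gg_{\text{tight}}(\Ss_{G,E})$ is Hausdorff; Theorem~\ref{thmb}{B} then gives a $^*$-isomorphism $\Ll_R(G,E)\cong A_R(\Gg)$. Thus it suffices to prove that $A_R(\Gg)$ is purely infinite simple, and simplicity is already in hand by hypothesis (transported across the isomorphism). Since $\Gg$ is ample with Hausdorff unit space, the functions $1_{Z(v)}$ and their finite sums furnish a set of local units for $A_R(\Gg)$.

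Next I would reduce pure infiniteness to the existence of one infinite idempotent. For a simple ring with local units, exhibiting a single infinite idempotent forces the whole algebra to be purely infinite simple: any such idempotent is full by simplicity, and a standard transitivity argument (the algebraic analogue of Cuntz's dichotomy for simple $C^*$-algebras) moves it into an arbitrary nonzero corner. So the crux becomes: build a compact open bisection $B\in\Bb$ with $s(B)$ a basic compact open set $U\subseteq \Gg^{(0)}$ and $r(B)\subsetneq U$. Then $t\mydef 1_B$ satisfies $t^*t = 1_U$ and $tt^* = 1_{r(B)}$, so $1_U$ is Murray--von Neumann equivalent (via $t$) to the proper subidempotent $1_{r(B)}$ and is therefore infinite.

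The infinite idempotent is manufactured from the $G$-circuit. Because $\Ll_R(G,E)$ is simple and the finiteness hypothesis holds, Proposition~\ref{simple.LGE} applies, and in particular its condition~(2) tells us that every $G$-circuit in $E$ has an entry. Feeding the given $G$-circuit (in the sense of \cite[Definition~14.4]{MR3581326}) into the germ construction produces an element $s=(\alpha,g,\beta)\in\Ss_{G,E}$ whose bisection $\Theta_s$ has source the cylinder $Z(\beta)$; the loop structure of the circuit returns this cylinder into itself under the action of $\Ss_{G,E}$, so $r(\Theta_s)\subseteq Z(\beta)$, and the \emph{entry} is exactly what guarantees the containment is proper. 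Equivalently, at the level of generators $t_{\Theta_s}=s_{\alpha,g}s_{\beta,e_G}^*$ is a partial isometry with $t_{\Theta_s}^*t_{\Theta_s}=p_{s(\beta),e_G}$ and $t_{\Theta_s}t_{\Theta_s}^*$ a proper subidempotent of $p_{s(\beta),e_G}$. This supplies the required infinite idempotent, and the preceding paragraph then completes the proof.

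The hard part will be the bookkeeping in this last step: translating the combinatorial data of a $G$-circuit with an entry (Definitions~14.4 and 14.9 of \cite{MR3581326}) into a genuine contracting bisection, checking that $\Theta_s$ really is a bisection in the possibly non-principal groupoid $\Gg$, and verifying that the $G$-action on the periodic tail yields $r(\Theta_s)\subsetneq s(\Theta_s)$ rather than equality. A secondary point requiring care is confirming that the principle ``simple plus one infinite idempotent implies purely infinite simple'' is genuinely available in the non-unital, local-units setting relevant here, and not merely in the unital case.
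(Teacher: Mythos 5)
Your opening moves match the paper's intended route: Hausdorffness from Proposition~\ref{when.hausdorff}, the transport $\Ll_R(G,E)\cong A_R(\Gg_{\text{tight}}(\Ss_{G,E}))$ via Theorem~\ref{thmb}{B}, and the use of Proposition~\ref{simple.LGE} to extract an entry for the $G$-circuit, which then yields a compact open bisection $\Theta_s$ with $r(\Theta_s)\subsetneq s(\Theta_s)$ and hence a single infinite idempotent. But the step you flag as ``secondary'' is in fact a genuine gap, and it is fatal as stated: the principle ``simple ring with local units $+$ one infinite idempotent $\Rightarrow$ purely infinite simple'' is \emph{false}, even in the unital case. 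R{\o}rdam (2003) constructed a unital simple $C^*$-algebra containing both a finite and an infinite projection; a unital simple $C^*$-algebra is algebraically simple, and since finiteness of idempotents passes to subidempotents, the corner cut out by the finite projection contains no infinite idempotent at all. So that ring is simple, has local units, has an infinite idempotent, and is not purely infinite. The ``transitivity argument'' you invoke does not exist in this generality: fullness of your infinite idempotent $q$ gives $A=AqA$ and (by simplicity) that $q$ is properly infinite, which yields comparisons of the form $e\precsim q$ for other idempotents $e$, but pure infiniteness needs the \emph{downward} comparison --- an infinite idempotent inside \emph{every} nonzero corner (equivalently every nonzero one-sided ideal) --- and that cannot be extracted from one infinite idempotent plus simplicity alone.

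What closes the gap is the dynamics, and this is exactly what the paper's (very terse) proof does: it says to use \emph{translates} of the $G$-circuit to construct an infinite path and then proceed as in \cite[Theorem~4.7]{ExeParSta}. Concretely, simplicity gives via Proposition~\ref{simple.LGE}(1) that $E$ is weakly-$G$-transitive, which corresponds to minimality of $\Gg_{\text{tight}}(\Ss_{G,E})$; minimality lets you conjugate your contracting bisection into an arbitrary nonempty compact open subset of the unit space, i.e.\ it upgrades your single contraction to the statement that the groupoid is \emph{locally contracting}. One then applies the locally-contracting machinery (the Steinberg-algebra analogue of Anantharaman-Delaroche's theorem \cite{AdelaR}, as transported in \cite{ExeParSta}): a simple algebra of an ample Hausdorff, minimal, effective, locally contracting groupoid is purely infinite, because every nonzero element can be cut down (using effectiveness and a graded/uniqueness-type argument) to a nonzero multiple of a cylinder indicator $1_{Z(\mu)}$, under which the translated contraction produces an infinite idempotent. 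So your construction of the infinite idempotent from the circuit-with-entry is the right local ingredient, but you must localise it everywhere using minimality rather than appeal to a global ring-theoretic dichotomy; without that, the argument would ``prove'' pure infiniteness of R{\o}rdam-type algebras, which is false.
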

\begin{proof}
Use translates of a $G$-circuit to construct a infinite path and proceed as in \cite{ExeParSta}.
\end{proof}

\section{Proof of Theorem~\ref{thmc}{C}}
\label{section.four}

In this section we prove Theorem~\ref{thmc}{C} studying Steinberg algebras of non-Hausdorff groupoids. In this setting it is not clear when the $^*$-homomorphism $\pi_{G,E}\colon \Ll_R(G,E) \to A_R(\Gg_{\text{tight}}(\Ss_{G,E}))$ of Proposition~\ref{nonzero} is a $^*$-isomorphism. We still know that the family $\{t_U : U\in \Bb\}$ of Lemma~\ref{lem.tU} satisfies \eqref{item:zero}--\eqref{item:additive}, but we cannot conclude  immediately that $\pi_{G,E}$ admits an inverse.
By considering actions with an appropriate amount of ``strongly fixed'' paths one can nevertheless get an inverse. We now introduce such paths and state Theorem~\ref{non.Hasu.EG}, giving Theorem~\ref{thmc}{C} as a corollary.
\begin{defn}
\label{strongly.fixed}
Let $E$ be as in Notation~\ref{note}. Let $\beta\in E^*\setminus E^0$ be a finite path in $E$. We say $\beta$ is \emph{strongly fixed} if $\beta$ is strongly fixed by some $g\in G\setminus\{e_G\}$. We say $\beta$ is \emph{minimal} strongly fixed if no prefix (i.e., initial segment) of $\beta$ is strongly fixed. Let $x\in E^\infty$ be an infinite path in $E$. We say $x$ is \emph{strongly fixed} if some initial segment $\beta\in  r(x)E^*\setminus\{r(x)\}$ of $x$ is strongly fixed.
\end{defn}

\begin{thm}
\label{non.Hasu.EG}
Let $(G,E,\varphi)$ be as in Notation~\ref{note}. Let $R$ be a unital commutative $^*$-ring. For $u\in E^0$, let $\mathcal{F}_u$ be the set of all minimal strongly fixed paths with range $u$. Suppose that $Z(\gamma)\cap Z(\gamma')=\emptyset$ whenever $\gamma\neq\gamma'\in \mathcal{F}_u$. If for each $u\in E^0$,
\begin{itemize}
\item there exist $x\in Z(u)$ that is not strongly fixed, or
\item $\mathcal{F}_u$ is finite,
\end{itemize}
then
$\Ll_R(G,E) \cong A_R(\Gg_{\text{tight}}(\Ss_{G,E}))$,
via the map $\pi_{G,E}$ of Proposition~\ref{nonzero}.
\end{thm}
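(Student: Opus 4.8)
The plan is to show that $\pi_{G,E}$ is a bijective $^*$-homomorphism. Surjectivity is immediate: the functions $1_{\Theta_s}$, $s\in\Ss_{G,E}$, span $A_R(\Gg_{\text{tight}}(\Ss_{G,E}))$ and each lies in the image of $\pi_{G,E}$, since $\pi_{G,E}(s_{\alpha,g}s_{\beta,e_G}^*)=1_{\Theta_{(\alpha,g,\beta)}}$. For injectivity I would invoke the Graded Uniqueness Theorem~\ref{thma}{A}. The continuous cocycle $[(\alpha,g,\beta),x]\mapsto|\alpha|-|\beta|$ makes $A_R(\Gg_{\text{tight}}(\Ss_{G,E}))$ into a $\ZZ$-graded $^*$-algebra (this works equally for non-Hausdorff ample groupoids), and $\pi_{G,E}$ is graded because it carries a spanning element of the degree-$n$ component $A_n$ of Lemma~\ref{lem.grading} to a function supported on the degree-$n$ part of the groupoid. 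Thus by Theorem~\ref{thma}{A} it suffices to prove that $\pi_{G,E}$ is injective on $\Dd=\espan_R\{p_{v,f}\}$.

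The next step is to reduce injectivity on $\Dd$ to a single-vertex statement. Given $a\in\Dd$ with $\pi_{G,E}(a)=0$, left multiplication by $p_{w,e_G}$ isolates the terms carrying first index $w$; right multiplication by $p_{w',e_G}$ together with the fact that the source cylinders $Z(w')$ for distinct $w'\in G\act{}w$ are disjoint splits $a$ into pieces each supported over a single $Z(w')$; and multiplying by the partial isometry $p_{w,g_0}$ (with $g_0\act{}w'=w$) transports everything into a single vertex and a single stabiliser coset. These manipulations use only the $(G,E)$-family relations and the description of the $\Theta_s$, and crucially do not require the orbits $G\act{}w$ to be finite. They reduce the problem to the following \emph{core claim}: for $w\in E^0$ and a finitely supported family $(c_h)_{h\in\stab{w}}$,
\[
\sum_{h\in\stab{w}}c_h\,1_{\Theta_{(w,h,w)}}=0 \ \Longrightarrow\ \sum_{h\in\stab{w}}c_h\,p_{w,h}=0 .
\]

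The core claim is where the hypotheses enter, and it is the main obstacle. The key germ computation is that $[(w,g,w),x]=[(w,h,w),x]$ if and only if $h^{-1}g$ strongly fixes some initial segment of $x$; moreover strong fixing is monotone, so if $h^{-1}g$ strongly fixes a path it strongly fixes every extension. Expanding $p_{w,h}=\sum_{\gamma\in wE^n}s_{h\act{}\gamma,\varphi(h,\gamma)}s_{\gamma,e_G}^*$ shows that $\sum_h c_h p_{w,h}=0$ holds exactly when, for \emph{every} finite path $\gamma$ from $w$, the coefficients sum to zero within each class of the relation ``$h^{-1}g$ strongly fixes $\gamma$'', whereas the vanishing of $\sum_h c_h 1_{\Theta_{(w,h,w)}}$ only records this condition at the \emph{infinite-path} level. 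Since the groupoid relation is genuinely weaker, I would prove the core claim by contraposition: assuming the algebraic relation fails at some finite $\gamma$, I construct an infinite path $x\in Z(w)$ along which no \emph{additional} merging of the (finitely many) relevant group elements occurs beyond that already seen at $\gamma$, so that the offending class survives as an actual germ class and witnesses the failure of the groupoid relation.

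Constructing this ``escaping'' path is the crux of the argument. If the relevant vertex carries a point that is not strongly fixed (the first alternative in the hypothesis), I can steer $x$ into it and halt all further merging at once. Otherwise $Z(w)=\bigsqcup_{\gamma\in\mathcal{F}_w}Z(\gamma)$ is a \emph{finite} partition by the minimal strongly fixed cylinders -- here both the finiteness of $\mathcal{F}_w$ and the disjointness hypothesis $Z(\gamma)\cap Z(\gamma')=\emptyset$ are essential -- and the residual group elements $\varphi(h,\gamma)$ replace the $h$'s as I descend into the finitely many source vertices $s(\gamma)$, where the same dichotomy is available because the hypotheses hold at \emph{every} vertex. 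Disjointness guarantees that each point has a unique minimal strongly fixed prefix, so the descent runs along a well-defined, finitely branching tree; the delicate point is to show that this descent either reaches a non-strongly-fixed point, allowing the construction to terminate, or else yields an infinite strongly fixed path that does not reinstate the collapsed class. This is exactly the behaviour that is uncontrolled when $\Gg_{\text{tight}}(\Ss_{G,E})$ is badly non-Hausdorff, and taming it through the two hypotheses is the heart of the proof. With the core claim established, Theorem~\ref{thma}{A} yields injectivity, so $\pi_{G,E}$ is a $^*$-isomorphism; Theorem~\ref{thmc}{C} then follows by verifying that Katsura triples meet these hypotheses.
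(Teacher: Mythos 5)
Your overall frame matches the paper's: the cocycle grading on the non-Hausdorff Steinberg algebra (Lemma~\ref{non-hausdorff}), the Graded Uniqueness Theorem~\ref{thma}{A} to reduce to injectivity on $\Dd$, the vertex-by-vertex reduction via Lemma~\ref{Um=Uniff}, and --- importantly --- the correct formulation of the core claim as ``$\sum_h c_h 1_{\Theta_{(w,h,w)}}=0$ implies $\sum_h c_h p_{w,h}=0$'' rather than linear independence, which genuinely fails (Example~\ref{exp.2}). Your first alternative is also exactly the paper's Lemma~\ref{with111paths}: a point of $Z(u)$ that is not strongly fixed yields full linear independence.

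The gap is in your second case, and it is the heart of the theorem. First, your stated goal --- construct $x$ along which ``no \emph{additional} merging of the relevant group elements occurs beyond that already seen at $\gamma$'' --- is not achievable in general: if a residual $\varphi(h^{-1}g,\gamma)$ strongly fixes every sufficiently long path out of $s(\gamma)$ (which happens, e.g., in Katsura triples where a row of $B$ vanishes), merging is forced along \emph{every} extension; Example~\ref{exp.2} shows that nontrivial relations among the indicator functions must be tolerated, so the invariant one can hope to propagate is not ``classes stop merging'' but ``some class \emph{sum} of coefficients remains nonzero,'' and your sketch neither formulates nor proves such a statement. Second, you explicitly defer the crux (``the delicate point is to show that this descent either reaches a non-strongly-fixed point \dots{} or else yields an infinite strongly fixed path that does not reinstate the collapsed class''); that deferred step \emph{is} the theorem in case two, so the proposal is a plan rather than a proof. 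The paper's actual mechanism is entirely different and avoids path construction: finiteness of $\mathcal{F}_u$ plus the disjointness hypothesis give a finite partition $\Theta_{(u,m,m^{-1}\act{}u)}=\bigsqcup_{\gamma\in\mathcal{F}_u}U_{(m,\gamma)}$ into compact open bisections that are pairwise either equal or disjoint (Lemma~\ref{with110paths.new}), together with matching algebraic elements $p_U$ satisfying $p_{u,m}=\sum_{\gamma\in\mathcal{F}_u}p_{U_{(m,\gamma)}}$ (via Lemma~\ref{lem.tU}) and $p_U=p_V$ exactly when $U=V$. Vanishing of the function then forces the grouped coefficient sums to vanish, and since the \emph{same} identifications hold among the $p_U$ in $\Ll_R(G,E)$, the algebraic sum vanishes too (Lemma~\ref{final110paths.new}): the groupoid-level collapsing is mirrored algebraically, not escaped. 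Your contrapositive could in principle be completed differently --- expand $p_{w,h}$ to depth $n$, observe that strong fixing is monotone so the set of finite paths carrying a nonvanishing class sum is prefix-closed, and run a K\H{o}nig-type selection in the finitely branching path tree, using stabilisation of merging among a finite support to produce a germ where the function is nonzero --- but that is a substantially different and more delicate argument than what you sketch, and as written your proposal does not supply it.
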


The proof of Theorem~\ref{non.Hasu.EG} is essentially contained in the five lemmas Lemma~\ref{non-hausdorff}--Lemma~\ref{final110paths.new}. Lemma~\ref{non-hausdorff} establishes a graded structure of $A_R(\Gg_{\text{tight}}(\Ss_{G,E}))$ allowing us to use the Graded Uniqueness Theorem~\ref{thma}{A}. Lemma~\ref{Um=Uniff} further reduces the problem, so we only need to prove injectivity of $\pi_{G,E}$ on each  $\espan_R\{p_{u,g}, g\in G\}$. We then consider two complementary cases:
\begin{enumerate}
\item\label{caseONE} There is an infinite path with range $u$ that is not strongly fixed.
\item\label{caseTWO} All infinite paths with range $u$ are strongly fixed.
\end{enumerate}
In case \eqref{caseONE} we prove that the elements  $\{p_{u,g}, g\in G\}$ are linearly independent (Lemma~\ref{with111paths}). In case \eqref{caseTWO} we introduce a certain disjointification of $p_{u,g}$ and $\pi_{G,E}(p_{u,g})$ relative to suitable strongly fixed paths (Lemma~\ref{with110paths.new}). We use this disjointification to show that the elements $\{p_{u,g}, g\in G\}$ are ``sufficiently'' linearly independent (Lemma~\ref{final110paths.new}). We finally combine these results in the proof of Theorem~\ref{non.Hasu.EG}.

Before getting more technical we present two examples of non-Hausdorff groupoids of germs $\Gg_{\text{tight}}(\Ss_{G,E})$ illustrating how these two complementary cases may arise.

\begin{example}
\label{exp.1}
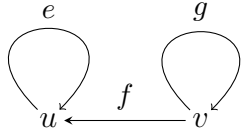
\begin{figure}
\centering
\[
\begin{tikzpicture}
    \node[circle, inner sep=1pt] (v1) at (0,0) {$u$};
    \node[circle, inner sep=1pt] (v2) at (2,0) {$v$};
	\path[->,every loop/.style={looseness=18}] (v1)
	         edge[in=45,out=135,loop, black]  node[above, pos=0.5,black]{\small$e$} ();
	\path[->,every loop/.style={looseness=18}] (v2)
	         edge[in=45,out=135,loop, black]  node[above, pos=0.5,black]{\small$g$} ();
    \draw[black, -stealth, in=0, out=180] (v2) to node[above, pos=0.5, black] {\small$f$} (v1);
\end{tikzpicture}
\]
\caption{Smallest graph of a non-Hausdorff groupoid $\Gg_{A,B}$.} \label{fig.non.haus.1}
\end{figure}
Let $\Gg_{A,B}$ be the groupoid of germs for the Katsura triple $(\ZZ,E,\varphi)$ associated to the matrices
$$A=\minimatrix{1}{1}{0}{1}, \ \ \ B=\minimatrix{1}{0}{0}{1}$$
(see Definition~\ref{def.graph.for.AB} and Definition~\ref{def.GSEG}). The graph $E$  associated to $A$ is illustrated on Figure~\ref{fig.non.haus.1}.

Notice that $\Gg_{A,B}$ is non-Hausdorff:
With $N$, $A$, $B$, $I$ and $E$ as in Definition~\ref{def.graph.for.AB} and with
$K^{\alpha,l}_j\mydef \frac{l B_{r(\alpha_1)s(\alpha_1)}\cdots B_{r(\alpha_j)s(\alpha_j)}}{A_{r(\alpha_1)s(\alpha_1)}\cdots A_{r(\alpha_j)s(\alpha_j)}}$
defined for each $\alpha=\alpha_1\alpha_2\dots \alpha_{|\alpha|}\in E^*$, $l\in \ZZ$, and $j\in \{1,\dots, |\alpha|\}$ we see that
$$M_l^i=\{\alpha\in iE^*\setminus \{i\}\ : \ K^{\alpha,l}_1,\dots, K^{\alpha,l}_{|\alpha|-1}\in \ZZ~\setminus~\{0\},\, K^\alpha_{|\alpha|}=0\}$$
is the set of all minimal strongly fixed paths for $l$ with range $i$  (see \cite[Lemma 18.4]{MR3581326}). Here $M^u_1=\{f,ef,eef,eeef,\dots\}$ is infinite, so $\Gg_{A,B}$ is non-Hausdorff by Proposition~\ref{when.hausdorff}.

Using Theorem~\ref{non.Hasu.EG} (or Theorem~\ref{thmc}{C}) proved later in this section we know that $\Oo_{A,B}^\text{alg}(R) \cong A_R(\Gg_{A,B})$. However, for this example the arguments simplify as follows:

Here the proof comes down to showing that for each
vertex ${w\in E^0}$ the indicator functions on the sets $\{\Theta_{(w,m,w)}: m\in \ZZ\}$ are linearly independent in $A_R(\Gg_{A,B})$, That is, for
each  finite subset $F$ of $\ZZ$,
\begin{equation*}
\sum_{m\in F} r_m1_{\Theta_{(w,m,w)}}=0 \ \ \Longrightarrow\ \  \text{each} \ r_m=0.
\end{equation*}
It turns out that a direct inspection of the sets $U_m\mydef \Theta_{(u,m,u)}$ and $V_m\mydef \Theta_{(v,m,v)}$ suffices. For each $m\in \ZZ$, we have $V_m\cap W=\emptyset$ for all $W\in \{U_n: n\in \ZZ\}\cup \{V_n: n\neq m\}$, and $[(u,m,u), eee\dots]\in U_m\setminus \big( \bigcup_{n\neq m} U_n \cup  \bigcup_{n\in \ZZ} V_n\big)$. So it is easy to see that the functions $1_{U_m}$ and $1_{V_m}$ are linearly independent. The infinite path $eee\dots$ with range $u$ is not strongly fixed, so this is an instance of case \eqref{caseONE}.
\end{example}

\begin{example}
\label{exp.2}
\begin{figure}
\centering
\[
\begin{tikzpicture}
    \node[circle, inner sep=1pt] (v2) at (5,0) {$u$};
    \node[circle, inner sep=1pt] (v3) at (7,1) {$v$};
            \node[circle, inner sep=1pt] (v3b) at (9,1) {$v'$};
        \node[circle, inner sep=1pt] (v3c) at (11,1) {$v''$};
    \node[circle, inner sep=1pt] (v4) at (9,-2) {$w$};
	     \draw[black,  -stealth, in=0, out=180] (v3c) to node[above, pos=0.5, black] {\small$0$} (v3b);
	     \draw[black,  -stealth, in=0, out=180] (v3b) to node[above, pos=0.5, black] {\small$0$} (v3);
	\path[->,every loop/.style={looseness=18}] (v3b)
	         edge[in=45,out=135,loop, black]  node[above, pos=0.5,black]{\small$1$} ();
	\path[->,every loop/.style={looseness=18}] (v4)
	         edge[in=45,out=135,loop, black]  node[above, pos=0.5,black]{\small$0$} ();
	         	\path[->,every loop/.style={looseness=18}] (v3c)
	         edge[in=45,out=135,loop, black]  node[above, pos=0.5,black]{\small$1$} ();
  	\draw[black,  -stealth, in=30, out=150] (v3) to node[above, pos=0.5, black] {\small$1$} (v2);
        \draw[black,  -stealth, in=20, out=160] (v3) to node[above, pos=0.5, black] {\small$$} (v2);
        \draw[black,  -stealth, in=330, out=210] (v4) to node[below, pos=0.5, black] {\small$1$} (v2);
        \draw[black,  -stealth, in=340, out=200] (v4) to node[below, pos=0.5, black] {\small$$} (v2);
        \draw[black,  -stealth, in=350, out=190] (v4) to node[below, pos=0.5, black] {\small$$} (v2);
\end{tikzpicture}
\]
\caption{Graph of a non-Hausdorff groupoid $\Gg_{A,B}$ without linear independence amongst $\{1_{\Theta_{(u,m,u)}}: m\in \ZZ\}$.} \label{fig.non.haus.not.read.2}
\end{figure}
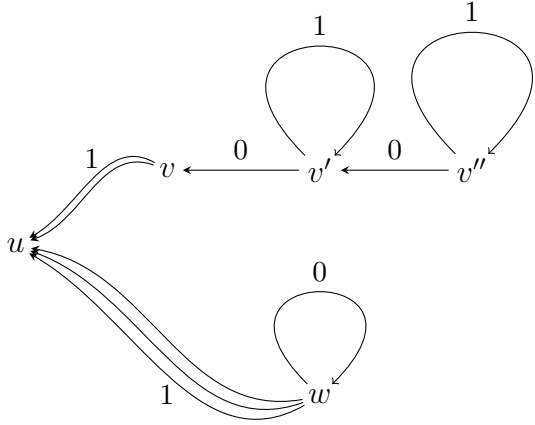
Let $\Gg_{A,B}$ be the groupoid of germs for the Katsura triple $(\ZZ,E,\varphi)$ associated to the matrices
$$A=
\left(\begin{smallmatrix}
0 & 2 & 0 & 0 & 3\\
0 & 0 & 1 & 0 & 0\\
0 & 0 & 1 & 1 & 0\\
0 & 0 & 0 & 1 & 0\\
0 & 0 & 0 & 0 & 1\end{smallmatrix} \right), \ \ \
B=
\left(\begin{smallmatrix}
0 & 1 & 0 & 0 & 1\\
0 & 0 & 0 & 0 & 0\\
0 & 0 & 1 & 0 & 0\\
0 & 0 & 0 & 1 & 0\\
0 & 0 & 0 & 0 & 0\end{smallmatrix} \right).$$The graph $E$ is illustrated on Figure~\ref{fig.non.haus.not.read.2} and the matrixes are given with respect to the ordering $u,v,v',v'',w$ of the vertices. Once again the groupoid $\Gg_{A,B}$ is non-Hausdorff (see Proposition~\ref{when.hausdorff}) but nevertheless we know that $\Oo_{A,B}^\text{alg}(R) \cong A_R(\Gg_{A,B})$.  Here it is the vertex $u$ that makes the arguments more challenging. All infinite paths with range $u$ are strongly fixed, cf.~case \eqref{caseTWO}. With $U_m\mydef\Theta_{(u,m,u)}$ for $m\in \ZZ$ we have
\begin{equation}
\label{non.lin.ind}
1_{U_0}+2\cdot 1_{U_1}+1_{U_2}=1_{U_3}+2\cdot 1_{U_4}+1_{U_5}.
\end{equation}
In particular the indicator functions $1_{U_0},\dots, 1_{U_5}$ in $A_R(\Gg_{A,B})$ are not linearly independent and the analysis used in Example~\ref{exp.1} is insufficient. Regardless, the lack of linear independence is compensated by a nice structure amongs the sets $U_m$. For $o\in \{v,w\}$,  set $U_m^o\mydef\{[(u,m,u), x]: x\in Z(u), x(1)=o\}$. Each $U_m$ admits a partition $U_m=U_m^v\sqcup U_m^w$ corresponding to minimal strongly fixed paths passing though $v$ and $w$ respectively. Any two of these sets $\{U_m^v, U_m^w :  m=0,\dots,5\}$ are either equal or disjoint(\footnote{
This is how we see \eqref{non.lin.ind}: we have ${\color{blue}U_0^{v}=U_2^{v}=U_4^{v}}$, ${\color{darkred}U_1^{v}=U_3^{v}=U_5^{v}}$,  ${\color{darkgreen}U_0^{w}=U_3^{w}}$, ${\color{violet}U_1^{w}=U_4^{w}}$, ${\color{olive}U_2^{w}=U_5^{w}}$, so $1_{U_0}+2\cdot 1_{U_1}+1_{U_2}=2\cdot1_{\color{blue}U_0^{v}}+2\cdot1_{\color{darkred}U_1^{v}}+1_{\color{darkgreen}U_0^{w}}+2\cdot 1_{\color{violet}U_1^{w}}+1_{\color{olive}U_2^{w}}=1_{U_3}+2\cdot 1_{U_4}+1_{U_5}$.
}). This mutual disjointness powers the proof of injectivity of $\pi_{\ZZ,E}$ as we shall see in the proof of Lemma~\ref{final110paths.new}.
\end{example}

We now return back to the proofs of Lemma~\ref{non-hausdorff} to Lemma~\ref{final110paths.new}.

\begin{lemma}
\label{non-hausdorff}
Let $(G,E,\varphi)$ be as in Notation~\ref{note}. Let $R$ be a unital commutative $^*$-ring. Then the $^*$-algebra $ A_R(\Gg_{\text{tight}}(\Ss_{G,E}))$  and the $^*$-homomorphism $\pi_{G,E}\colon \Ll_R(G,E) \to A_R(\Gg_{\text{tight}}(\Ss_{G,E}))$ of Proposition~\ref{nonzero} are $\ZZ$-graded.
\end{lemma}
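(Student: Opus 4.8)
The plan is to produce the grading on $A_R(\Gg)$, where $\Gg := \Gg_{\text{tight}}(\Ss_{G,E})$, from a continuous $\ZZ$-valued cocycle on $\Gg$, and then to read off that $\pi_{G,E}$ respects the gradings by checking it on spanning elements. First I would define $c\colon \Gg \to \ZZ$ on germs by $c([(\alpha,g,\beta),x]) := |\alpha|-|\beta|$. The key preliminary observation is that the degree map $(\alpha,g,\beta)\mapsto |\alpha|-|\beta|$ (with the zero element excluded) is a grading of the inverse semigroup $\Ss_{G,E}$: a short case-check against the two nonzero cases of the product formula following Definition~\ref{semigroup} shows $|\alpha|-|\beta|$ is additive under multiplication whenever the product is nonzero, and every idempotent $(\gamma,e_G,\gamma)\in\Ee$ has degree $0$.

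With this in hand I would verify that $c$ is well-defined on germs: if $[(\alpha,g,\beta),x]=[(\alpha',g',\beta'),x]$, then by the description of germ equality there is a nonzero idempotent $e=(\gamma,e_G,\gamma)\in\Ee$ with $se=s'e$ for $s=(\alpha,g,\beta)$ and $s'=(\alpha',g',\beta')$; since these products are nonzero, $c$ is multiplicative there and $c(e)=0$, so $|\alpha|-|\beta|=c(s)=c(se)=c(s'e)=c(s')=|\alpha'|-|\beta'|$. Continuity of $c$ is immediate: $\ZZ$ is discrete and $c$ is constant, equal to $|\alpha|-|\beta|$, on each basic compact open bisection $\Theta_{(\alpha,g,\beta)}$, so each $\Gg_n:=c^{-1}(n)$ is clopen. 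The same additivity makes $c$ a groupoid cocycle, i.e. $c(\gamma_1\gamma_2)=c(\gamma_1)+c(\gamma_2)$ for composable germs, because composition of germs is induced by multiplication in $\Ss_{G,E}$.

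Next I would set $A_R(\Gg)_n := \espan_R\{1_B : B \text{ a compact open bisection},\ B\subseteq \Gg_n\}$ and check that $(A_R(\Gg)_n)_{n\in\ZZ}$ is a $\ZZ$-grading. Since each $\Gg_n$ is clopen, for any compact open bisection $B$ the sets $B\cap\Gg_n$ are again compact open bisections and are empty for all but finitely many $n$ by compactness, so $1_B=\sum_n 1_{B\cap\Gg_n}$ gives $A_R(\Gg)=\sum_n A_R(\Gg)_n$; directness follows because the $\Gg_n$ are pairwise disjoint, so a function supported in $\Gg_n$ cannot be a nontrivial sum of functions supported in the other $\Gg_m$; and $A_R(\Gg)_m A_R(\Gg)_n\subseteq A_R(\Gg)_{m+n}$ follows from the cocycle identity together with $1_{\Theta_s}1_{\Theta_t}=1_{\Theta_{st}}$ (already used in Proposition~\ref{nonzero}). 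I would stress that nothing here requires $\Gg$ to be Hausdorff: only that the unit space is Hausdorff and $\Gg$ is ample, so that $A_R(\Gg)$ is spanned by indicator functions of compact open bisections exactly as in Definition~\ref{Steinberg}.

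Finally, to see that $\pi_{G,E}$ is graded I would use the grading of $\Ll_R(G,E)$ from Lemma~\ref{lem.grading}, whose degree-$n$ component is spanned by the $s_{\alpha,g}s_{\beta,e_G}^*$ with $|\alpha|-|\beta|=n$. Multiplicativity of $s\mapsto 1_{\Theta_s}$ gives $\pi_{G,E}(s_{\alpha,g}s_{\beta,e_G}^*)=1_{\Theta_{(\alpha,g,\beta)}}$ (cf.~Remark~\ref{rem.pi.map}), which is supported in $\Gg_{|\alpha|-|\beta|}$ and hence lies in $A_R(\Gg)_{|\alpha|-|\beta|}$; in particular $\pi_{G,E}(p_{v,f})=1_{\Theta_{(v,f,f^{-1}\act{}v)}}$ has degree $0$ and $\pi_{G,E}(s_{e,g})=1_{\Theta_{(e,g,g^{-1}\act{}s(e))}}$ has degree $1$. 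Thus $\pi_{G,E}(A_n)\subseteq A_R(\Gg)_n$ for all $n$, so $\pi_{G,E}$ is a $\ZZ$-graded homomorphism. I expect the only genuinely delicate point to be confirming that the grading construction is valid in the non-Hausdorff setting, which is precisely why I would spell out the direct-sum decomposition by hand rather than quote a Hausdorff-only grading theorem.
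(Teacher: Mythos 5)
Your proposal is correct and follows essentially the same route as the paper: both use the cocycle $c([(\alpha,g,\beta),x])=|\alpha|-|\beta|$, its constancy on the basic compact open bisections $\Theta_s$, and the disjointness of the open supports of the homogeneous pieces to obtain the direct-sum decomposition of $A_R(\Gg_{\text{tight}}(\Ss_{G,E}))$ and the gradedness of $\pi_{G,E}$ (the paper phrases this as adapting \cite[Lemma~3.1]{MR3299719} to the non-Hausdorff setting, citing \cite{ClaExePar} for well-definedness of $c$). You simply make explicit some details the paper delegates to citations, such as additivity of the degree on $\Ss_{G,E}$ and the germ-equality check that $c$ is well defined.
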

\begin{proof}
Lemma~3.1 of \cite{MR3299719} generalises to not necessarily Hausdorff groupoids $\Gg\mydef\Gg_{\text{tight}}(\Ss_{G,E})$. Indeed, as in \cite{ClaExePar} the map $$c\colon \Gg \to \ZZ$$ given by $c([(\alpha,g,\beta), \mu])\mydef|\alpha|-|\beta|$ is well-defined and satisfies the cocycle identity $c(\gamma_1)c(\gamma_2)=c(\gamma_1\gamma_2)$ for any valid product $\gamma_1\gamma_2\in \Gg$. Now using that $A_R(\Gg)=\espan_R\{1_{\Theta_s} : s\in \Ss_{G,E}\}$ and the fact that $c$ is constant on each bisection $\Theta_s$ one can adopt the proof of  \cite[Lemma~3.1(1)]{MR3299719} as follows.

Fix any function $f=\sum_{s\in I} r_s1_{\Theta_s}\in A_R(\Gg)$ with $I\subseteq \Ss_{G,E}$ finite and each $r_s\neq 0$. For each $n\in \ZZ$ define
$$f_n\mydef\sum_{s\in I,\, c(s)=n} r_s1_{\Theta_s}.$$
 If follows that $f=\sum_n f_n$ is a sum of functions each in a graded component. Since the open supports $\supp(f_n)\mydef\{x\in \Gg : f_n(x)\neq 0\}$ of the functions $f_n$ are disjoint we deduce that  $f=0$ if and only if each $f_n=0$. It follows that the $^*$-algebra $ A_R(\Gg_{\text{tight}}(\Ss_{G,E}))$  and the $^*$-homomorphism $\pi_{G,E}\colon \Ll_R(G,E) \to A_R(\Gg_{\text{tight}}(\Ss_{G,E}))$ of Proposition~\ref{nonzero} are both $\ZZ$-graded.
\end{proof}

\begin{lemma}
\label{Um=Uniff}
Let $(G,E,\varphi)$ be as in Notation~\ref{note}. Let $R$ be a unital commutative $^*$-ring. Fix $u\in E^0$ and $n,m\in G$. Then
\begin{enumerate}
\item For each $\gamma\in E^*$ and $x\in E^\infty$ we have $x\in Z(\gamma)$ if and only if $m\act{}x\in Z(m\act{}\gamma)$.
\item If $y=[(u,m, m^{-1}\act{}u),  z]=[(u,n, n^{-1}\act{}u), z]$ for some $z\in E^\infty$, then $m\act{}z=n\act{}z$ and there exists $\alpha\in uE^*\setminus \{u\}$ that is strongly fixed by $nm^{-1}$ and satisfies $m\act{}z\in Z(\alpha)$.
\item Suppose that $\gamma\in uE^*\setminus \{u\}$ is strongly fixed by $nm^{-1}$. Then
$$[(u,m, m^{-1}\act{}u), z]=[(u,n, n^{-1}\act{}u), z] \ \text{for all} \ z\in Z(m^{-1}\act{}\gamma).$$
\item For any $v\in E^0\setminus \{u\}$ we have $\Theta_{(u,m,m^{-1}\act{}u)}\cap \Theta_{(v,n,n^{-1}\act{}v)}=\emptyset$.
\end{enumerate}
\end{lemma}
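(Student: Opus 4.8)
The plan is to verify the four statements in turn, using the explicit description of the germ relation on $\Gg_{\text{tight}}(\Ss_{G,E})$ together with the multiplication in $\Ss_{G,E}$, with Lemma~\ref{tech.lemma} supplying all the cocycle bookkeeping. For part~(1) the point is simply that $\sigma_m$ preserves path length (Lemma~\ref{tech.lemma}(1)), commutes with truncation, and satisfies $r(m\act{}x)=m\act{}r(x)$ (Lemma~\ref{tech.lemma}(5)): the action on $E^\infty$ is defined so that $(m\act{}x)_1\cdots(m\act{}x)_k=m\act{}(x_1\cdots x_k)$, so $x\in Z(\gamma)$ forces the first $|\gamma|$ edges of $m\act{}x$ to equal $m\act{}\gamma$, i.e.\ $m\act{}x\in Z(m\act{}\gamma)$; the reverse implication follows by applying this to $m^{-1}$, using $m^{-1}\act{}(m\act{}\gamma)=\gamma$ and $m^{-1}\act{}(m\act{}x)=x$ from Lemma~\ref{tech.lemma}(2).

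Part~(2) is where the substance lies. Write $s\mydef(u,m,m^{-1}\act{}u)$ and $t\mydef(u,n,n^{-1}\act{}u)$; that both germs are based at $z$ forces $m^{-1}\act{}u=n^{-1}\act{}u$. The equality $m\act{}z=n\act{}z$ is then immediate, since the two germs are the same groupoid element and the range map sends them to $m\act{}z$ and $n\act{}z$ respectively. For the strongly fixed path I unwind the germ relation: there is a nonzero idempotent $e=(\delta,e_G,\delta)\in\Ee$ with $z\in Z(\delta)$ and $se=te$; possibly after enlarging $\delta$ by one edge along $z$ (which only shrinks $e$ and, since $e'=ee'$ gives $se'=(se)e'=(te)e'=te'$, preserves the relation) we may assume $|\delta|\geq 1$. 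Computing the products via the $\gamma=\beta\varepsilon$ case of the multiplication, with $\beta=m^{-1}\act{}u$ a vertex and $\varepsilon=\delta$, yields $se=(m\act{}\delta,\varphi(m,\delta),\delta)$ and $te=(n\act{}\delta,\varphi(n,\delta),\delta)$, so $m\act{}\delta=n\act{}\delta$ and $\varphi(m,\delta)=\varphi(n,\delta)$. Setting $\alpha\mydef m\act{}\delta$, one checks $r(\alpha)=u$, $|\alpha|=|\delta|\geq 1$, $(nm^{-1})\act{}\alpha=n\act{}\delta=\alpha$, and, using Lemma~\ref{tech.lemma}(3) in the form $\varphi(nm^{-1},\alpha)=\varphi(n,\delta)\varphi(m^{-1},m\act{}\delta)=\varphi(n,\delta)\varphi(m,\delta)^{-1}=e_G$, that $\alpha$ is strongly fixed by $nm^{-1}$; finally $z\in Z(\delta)$ gives $m\act{}z\in Z(\alpha)$ by part~(1).

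Part~(3) is the converse computation. Put $\delta\mydef m^{-1}\act{}\gamma$; then $r(\delta)=m^{-1}\act{}u=n^{-1}\act{}u$ (the second equality from $(nm^{-1})\act{}u=u$, a consequence of $(nm^{-1})\act{}\gamma=\gamma$), so both germs are defined for $z\in Z(\delta)$ and $e\mydef(\delta,e_G,\delta)$ fixes $z$. The same product computation gives $se=(m\act{}\delta,\varphi(m,\delta),\delta)=(\gamma,\varphi(m,\delta),\delta)$ and $te=((nm^{-1})\act{}\gamma,\varphi(n,\delta),\delta)=(\gamma,\varphi(n,\delta),\delta)$; the cocycle identity together with $\varphi(nm^{-1},\gamma)=\varphi(n,\delta)\varphi(m,\delta)^{-1}=e_G$ forces $\varphi(m,\delta)=\varphi(n,\delta)$, so $se=te$ and the germs coincide. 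For part~(4) I compute ranges: every element of $\Theta_{(u,m,m^{-1}\act{}u)}$ has range $m\act{}x$ with range vertex $m\act{}(m^{-1}\act{}u)=u$, while every element of $\Theta_{(v,n,n^{-1}\act{}v)}$ has range vertex $v$; since an infinite path determines its range vertex uniquely and $u\neq v$, the two bisections are disjoint. The main obstacle is concentrated entirely in part~(2): faithfully translating the germ relation into the inverse-semigroup equation $se=te$, carrying out the cocycle manipulation to certify that $\alpha=m\act{}\delta$ is strongly fixed by $nm^{-1}$, and disposing of the degenerate case in which the witnessing idempotent is a vertex (which would yield the forbidden $\alpha=u$) by refining $\delta$ one edge along $z$.
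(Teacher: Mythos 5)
Your proof is correct and takes essentially the same route as the paper's: translate the germ equality into an equation $se=te$ in $\Ss_{G,E}$ for an idempotent $e=(\delta,e_G,\delta)$ with $z\in Z(\delta)$, compute the products to extract $m\act{}\delta=n\act{}\delta$ and equal cocycle values, and take $\alpha=m\act{}\delta$. The only local (and harmless) variations are that you handle the degenerate vertex case uniformly by refining $\delta$ one edge along $z$ where the paper instead splits into the cases $m=n$ (taking $\alpha$ to be the first edge of $m\act{}z$) and $m\neq n$ (ruling out $\alpha\in E^0$ since it would force $m=n$), that you obtain $m\act{}z=n\act{}z$ directly from the groupoid's range map rather than via the paper's cocycle/self-similarity computation, and that you certify strong fixedness by a direct cocycle manipulation instead of citing \cite[Proposition~5.6]{MR3581326}.
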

\begin{proof}
(1) Take any $\gamma\in E^*$ and $x\in Z(\gamma)$. By definition of $m\act{}x$ we have $(m\act{}x)(0,|\gamma|)=m\act{}(x(0,|\gamma|))=m\act{}\gamma$, so $m\act{}x\in Z(m\act{}\gamma)$. The converse is similar.

(2) Firstly we consider the case $m=n$. Then clearly $m\act{}z=n\act{}z$, and since $z\in Z(m^{-1}\act{}u)$ part (1) gives $m\act{}z\in Z(u)$. The element $\alpha\mydef (m\act{}z)_1\in  uE^*\setminus \{u\}$ is strongly fixed by $nm^{-1}$ and satisfies $m\act{}z\in Z(\alpha)$.

Suppose that $y=[(u,m, m^{-1}\act{}u), z]=[(u,n,n^{-1}\act{}u), z]$ for some $z\in E^\infty$ and $m\neq n$. With $s\mydef(u,m, m^{-1}\act{}u)$ and $t\mydef(u,n, n^{-1}\act{}u)$ we know that $[s,z]=[t,z]$ so there exists $e\mydef(\beta,0,\beta)$ with $\beta\in E^*$ such that $se=te$ and $z\in Z(\beta)$. With $\beta_1\mydef m\act{}\beta$ and $\beta_2\mydef n\act{}\beta$ we get
\begin{align*}
se&=(\beta_1,\varphi(m^{-1},\beta_1)^{-1}, m^{-1}\act{}\beta_1), \ \text{and} \ te=(\beta_2,\varphi(n^{-1},\beta_2)^{-1}, n^{-1}\act{}\beta_2),
\end{align*}
so $\beta_1=\beta_2$ and $\varphi(m^{-1},\beta_1)=\varphi(n^{-1},\beta_2)$. Let $\alpha\mydef \beta_1$. Then $n^{-1}\act{}\alpha=m^{-1}\act{}\alpha$ and $\varphi(n^{-1},\alpha)=\varphi(m^{-1},\alpha)$, so $\alpha$ is strongly fixed by $nm^{-1}$ \cite[Proposition~5.6]{MR3581326}. Since $z\in Z(\beta)$ it follows that $m\act{}z \in Z(m\act{}\beta)= Z(\alpha)$. Now since $z\in Z(m^{-1}\act{}u)$ we get $m\act{}z\in Z(u)$ and $r(\alpha)=u$. The element $\alpha\in uE^*$ can not be a vertex, because that would imply $m=\varphi(m^{-1},\alpha)^{-1}=\varphi(n^{-1},\alpha)^{-1}=n$. So $\alpha\in uE^*\setminus \{u\}$. Now we prove that $m\act{}z=n\act{}z$. Using that $\varphi(g^{-1},\gamma)=\varphi(g, g^{-1}\act{}\gamma)^{-1}$, see \cite[Proposition~2.6]{MR3581326}, we get $\varphi(m,\beta)=\varphi(m^{-1}, (m^{-1})^{-1}\act{}\beta)^{-1}=\dots=\varphi(n,\beta)$. Also $m\act{}\beta=n\act{}\beta$. Since $z\in Z(\beta)$, $m\act{}z=m\act{}(\beta z')=(m\act{}\beta)(\varphi(m,\beta)\act{}z')=\dots=n\act{}z$ for suitable $z'\in E^\infty$.

(3) Suppose that $\gamma\in uE^*\setminus \{u\}$ is strongly fixed by $nm^{-1}$. Take any $z\in Z(m^{-1}\act{}\gamma)$. Set $s\mydef(u,m, m^{-1}\act{}u)$ and $t\mydef(u,n, n^{-1}\act{}u)$.
Since $r(\gamma)=u$ and since $\gamma$ is strongly fixed by $nm^{-1}$, we have $z\in Z(m^{-1}\act{}u)=Z(n^{-1}\act{}u)$. Hence both $[s,z]$ and $[t,z]$ make sense. We claim that $[s,z]=[t,z]$. To see this, set $e\mydef(m^{-1}\act{}\gamma,0,m^{-1}\act{}\gamma)$. As $\gamma$ is strongly fixed by $nm^{-1}$, we have $e=(n^{-1}\act{}\gamma,0,n^{-1}\act{}\gamma)$, and hence
$$se=(\gamma,\varphi(m^{-1},\gamma)^{-1}, m^{-1}\act{}\gamma)=(\gamma,\varphi(n^{-1},\gamma)^{-1}, n^{-1}\act{}\gamma)=te.$$
Since $z\in Z(m^{-1}\act{}\gamma)=Z(n^{-1}\act{}\gamma)$ it follows that $[s,z]=[t,z]$.

(4) Elements of $\Theta_{(u,m,m^{-1}\act{}u)}$ have range in $Z(u)$ (\cite{MR2419901}), so $\Theta_{(u,m,m^{-1}\act{}u)}\cap \Theta_{(v,n,n^{-1}\act{}v)}\neq \emptyset$ implies $Z(u)\cap Z(v)\neq \emptyset$ and $u=v$.
\end{proof}

\begin{lemma}
\label{with111paths}
Let $(G,E,\varphi)$ be as in Notation~\ref{note}. Let $R$ be a unital commutative $^*$-ring. Fix $u\in E^0$. Suppose $x\in Z(u)$ is not strongly fixed. Then the indicator functions on the sets $\{\Theta_{(u,m,m^{-1}\act{}u)}, m\in G\}$ are linearly independent, i.e., for
each finite subset $F$ of $G$,
\begin{equation*}
\sum_{m\in F} r_m1_{\Theta_{(u,m,m^{-1}\act{}u)}}=0 \ \ \Longrightarrow\ \  r_m=0 \ \ \text{ for each} \ \  m\in F.
\end{equation*}
\end{lemma}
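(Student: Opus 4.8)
The plan is to prove linear independence by evaluating the hypothesised relation at a single, carefully chosen germ. Fix a finite set $F\subseteq G$ and suppose $\sum_{m\in F} r_m 1_{\Theta_{(u,m,m^{-1}\act{}u)}}=0$. Fix $m_0\in F$; I will show $r_{m_0}=0$. The germ I would test against is
$$\eta\mydef [(u,m_0,m_0^{-1}\act{}u),\, m_0^{-1}\act{}x].$$
Since $x\in Z(u)$, Lemma~\ref{Um=Uniff}(1) gives $m_0^{-1}\act{}x\in Z(m_0^{-1}\act{}u)$, so $\eta$ is a genuine element of $\Theta_{(u,m_0,m_0^{-1}\act{}u)}$; in particular $1_{\Theta_{(u,m_0,m_0^{-1}\act{}u)}}(\eta)=1$.

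The key step is to show that $\eta\notin \Theta_{(u,m,m^{-1}\act{}u)}$ for every $m\in F$ with $m\neq m_0$. Suppose not. Since $\Theta_{(u,m,m^{-1}\act{}u)}$ consists of germs $[(u,m,m^{-1}\act{}u),z]$ with $z\in Z(m^{-1}\act{}u)$, and the source of such a germ is $z$, equality with $\eta$ forces the sources to agree, i.e. $z=m_0^{-1}\act{}x$. Thus
$$[(u,m_0,m_0^{-1}\act{}u),\, m_0^{-1}\act{}x]=[(u,m,m^{-1}\act{}u),\, m_0^{-1}\act{}x].$$
Applying Lemma~\ref{Um=Uniff}(2), with its roles ``$m$'', ``$n$'', ``$z$'' taken to be $m$, $m_0$, $m_0^{-1}\act{}x$, yields a path $\alpha\in uE^*\setminus\{u\}$ that is strongly fixed by $m_0 m^{-1}$ and satisfies $m\act{}(m_0^{-1}\act{}x)\in Z(\alpha)$; moreover the same lemma gives $m\act{}(m_0^{-1}\act{}x)=m_0\act{}(m_0^{-1}\act{}x)=x$, so $x\in Z(\alpha)$.

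Now since $m\neq m_0$ we have $m_0 m^{-1}\neq e_G$, so $\alpha$ is strongly fixed in the sense of Definition~\ref{strongly.fixed}; and $x\in Z(\alpha)$ exhibits $\alpha\in r(x)E^*\setminus\{r(x)\}$ as an initial segment of $x$. Hence $x$ is strongly fixed, contradicting the hypothesis. This establishes the claim, and then evaluating $\sum_{m\in F} r_m 1_{\Theta_{(u,m,m^{-1}\act{}u)}}$ at $\eta$ annihilates every term except $m=m_0$, leaving $r_{m_0}=0$. As $m_0\in F$ was arbitrary, all coefficients vanish. I expect the only delicate point to be the bookkeeping in Lemma~\ref{Um=Uniff}(2): one must match the group elements so that the strongly fixed segment produced by the lemma lands inside $x$ itself, which is exactly where the hypothesis that $x$ is not strongly fixed is consumed. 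Everything else is a direct unwinding of the definitions of the bisections $\Theta_s$ and of germ equality.
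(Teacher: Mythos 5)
Your proof is correct and takes essentially the same approach as the paper: both evaluate the relation at the test germ $[(u,m,m^{-1}\act{}u),\,m^{-1}\act{}x]$, use the fact that germ equality forces the sources to agree, and invoke Lemma~\ref{Um=Uniff}(2) to produce a path $\alpha\in uE^*\setminus\{u\}$ strongly fixed by a nontrivial group element with $x\in Z(\alpha)$, contradicting that $x$ is not strongly fixed. The only cosmetic difference is that the paper introduces all the test germs $x^{(n)}$ simultaneously, whereas you fix one coefficient $r_{m_0}$ at a time.
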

\begin{proof}
Set $h\mydef \sum_{m\in F} r_m1_{\Theta_{(u,m,m^{-1}\act{}u)}}$ and for each $n\in G$, set $x^{(n)}\mydef [(u,n,n^{-1}\act{}u),n^{-1}\act{}x]$. Suppose that $x^{(n)}\in \Theta_{(u,m,m^{-1}\act{}u)}$ for some $n, m\in G$. It follows that $[(u,m, m^{-1}\act{}u), z]=[(u,n, n^{-1}\act{}u), z]$ for $z=n^{-1}\act{}x\in E^\infty$ . By Lemma~\ref{Um=Uniff} we have $x= m\act{}z=n\act{}z$ and there exists $\alpha\in uE^*$ that is strongly fixed by $nm^{-1}$ and satisfies $x\in Z(\alpha)$. But $x$ is not strongly fixed, so $n=m$. Hence $x^{(n)}\notin \Theta_{(u,m,m^{-1}\act{}u)}$ for $m\neq n$, so $h(x^{(m)})=r_m$.
\end{proof}

\begin{lemma}
\label{with110paths.new}
Let $(G,E,\varphi)$ be as in Notation~\ref{note}. Let $R$ be a unital commutative $^*$-ring. Fix $u\in E^0$. For $(m,\gamma)\in G\times uE^*$ define
$$U_{(m,\gamma)}\mydef \{[(u,m,m^{-1}\act{}u), x]: x\in Z(m^{-1}\act{}\gamma)\}.$$
Let $\mathcal{F}$ be the set of all minimal strongly fixed paths with range $u$ (see Definition~\ref{strongly.fixed}).
Suppose that $Z(\gamma)\cap Z(\gamma')=\emptyset$ whenever $\gamma\neq\gamma'\in \mathcal{F}$. Set $\mathcal{P}\mydef \{U_{(m,\gamma)} : (m,\gamma)\in G\times \mathcal{F}\}$. Then
\begin{enumerate}
\item \label{1.lem}The sets in $\mathcal{P}$ are compact open bisections.
\item  \label{2.lem} For $U\mydef U_{(m,\gamma)}$ and $V\mydef U_{(n,\eta)}\in\mathcal{P}$, the following are equivalent:
\begin{enumerate}
\item\label{2.a} $U\cap V\neq\emptyset$;
\item\label{2.b}  $\gamma=\eta \ \text{and} \ \gamma \ \text{is strongly fixed by}\ nm^{-1}$; and
\item\label{2.c}   $U=V$.
\end{enumerate}
\item For any $V_1,V_2\in \mathcal{P}$ either $V_1=V_2$ or $V_1\cap V_2=\emptyset$.
\item For any $V_i=U_{(m,\gamma_i)}\in \mathcal{P}$ we have $V_1\cap V_2=\emptyset \Leftrightarrow \gamma_1\neq\gamma_2$.
\item If $\mathcal{F}$ is finite and every $x\in Z(u)$ is strongly fixed, then
$$1_{\Theta_{(u,m,m^{-1}\act{}u)}}=\sum_{\gamma\in \mathcal{F}} 1_{U_{(m,\gamma)}}, \ \ \text{for each}\ m\in G.$$
\item For each $U\mydef U_{(m,\gamma)}\in \mathcal{P}$ set $p_U\mydef s_{\gamma,\varphi(m^{-1},\gamma)^{-1}}s_{m^{-1}\act{}\gamma,e_G}^*$.
With  $\pi_{G,E}$ as in Proposition~\ref{nonzero},
$$\pi_{G,E}(p_U)=1_{\Theta_{(\gamma,\varphi(m^{-1},\gamma)^{-1}, m^{-1}\act{}\gamma)}}=1_U.$$
\item For $U\mydef U_{(m,\gamma)}$ and $V\mydef U_{(n,\eta)}\in\mathcal{P}$, the following are equivalent:
\begin{enumerate}
\item[(c)]  $U=V$; and
\item[(d)]  $p_U=p_V$.
\end{enumerate}
\item If $\mathcal{F}$ is finite and every $x\in Z(u)$ is strongly fixed, then
$$p_{u,m}=\sum_{\gamma\in \mathcal{F}} p_{U_{(m,\gamma)}} \ \ \text{for each}\ m\in G.$$
\end{enumerate}
\end{lemma}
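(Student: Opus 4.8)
The plan is to prove the identity \emph{directly inside $\Ll_R(G,E)$}, because we cannot yet invoke injectivity of $\pi_{G,E}$ (that is precisely what Theorem~\ref{non.Hasu.EG} is building towards, so part~(5) together with part~(6) only tells us that $\pi_{G,E}$ maps both sides to the same function). First I would settle the base case $m=e_G$, where $p_{U_{(e_G,\gamma)}}=s_{\gamma,e_G}s_{\gamma,e_G}^*$, and then bootstrap to arbitrary $m$.

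For the base case the goal is $p_{u,e_G}=\sum_{\gamma\in\mathcal F}s_{\gamma,e_G}s_{\gamma,e_G}^*$. Put $N\mydef\max_{\gamma\in\mathcal F}|\gamma|$, which is finite since $\mathcal F$ is finite. Iterating \eqref{ref12} (legitimate as $E$ has no sources) gives $p_{u,e_G}=\sum_{\lambda\in uE^N}s_{\lambda,e_G}s_{\lambda,e_G}^*$. The heart of the matter is that the hypotheses force $\{Z(\gamma):\gamma\in\mathcal F\}$ to be a finite partition of $Z(u)$: every $x\in Z(u)$ is strongly fixed and hence has a minimal strongly fixed prefix, so $Z(u)=\bigcup_{\gamma\in\mathcal F}Z(\gamma)$, and the disjointness hypothesis makes this union disjoint. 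Consequently every $\lambda\in uE^N$ has a unique prefix in $\mathcal F$ (minimality rules out two comparable elements of $\mathcal F$), so $uE^N=\bigsqcup_{\gamma\in\mathcal F}\{\gamma\mu:\mu\in s(\gamma)E^{N-|\gamma|}\}$. Regrouping the sum accordingly, and using $s_{\gamma\mu,e_G}=s_{\gamma,e_G}s_{\mu,e_G}$ together with $\sum_{\mu\in s(\gamma)E^{N-|\gamma|}}s_{\mu,e_G}s_{\mu,e_G}^*=p_{s(\gamma),e_G}$, collapses each inner sum to $s_{\gamma,e_G}p_{s(\gamma),e_G}s_{\gamma,e_G}^*=s_{\gamma,e_G}s_{\gamma,e_G}^*$, which yields the base case.

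For general $m$ I would start from $p_{u,m}=p_{u,e_G}p_{u,m}$ (relation~\ref{GET.c} of Definition~\ref{def.LGE}, using $e_G\act u=u$) and substitute the base-case decomposition, obtaining $p_{u,m}=\sum_{\gamma\in\mathcal F}s_{\gamma,e_G}\big(s_{\gamma,e_G}^*p_{u,m}\big)$. Each factor $s_{\gamma,e_G}^*p_{u,m}$ I would rewrite as $\big(p_{u,m}^*s_{\gamma,e_G}\big)^*=\big(p_{m^{-1}\act u,m^{-1}}s_{\gamma,e_G}\big)^*$ via relation~\ref{GET.b}, and then evaluate $p_{m^{-1}\act u,m^{-1}}s_{\gamma,e_G}$ using the path-level form of relation~\ref{GET.d}, namely $p_{w,f}s_{\gamma,e_G}=\delta_{w,r(f\act\gamma)}s_{f\act\gamma,\varphi(f,\gamma)}$, which follows by induction on $|\gamma|$ from relation~\ref{GET.d} and the self-similarity identities in Lemma~\ref{tech.lemma}. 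Since $r(m^{-1}\act\gamma)=m^{-1}\act u$ the delta fires, giving $s_{\gamma,e_G}^*p_{u,m}=s_{m^{-1}\act\gamma,\varphi(m^{-1},\gamma)}^*$. Finally, applying \eqref{small.3} to $s_{\gamma,e_G}\,s_{m^{-1}\act\gamma,\varphi(m^{-1},\gamma)}^*$ produces exactly $s_{\gamma,\varphi(m^{-1},\gamma)^{-1}}s_{m^{-1}\act\gamma,e_G}^*=p_{U_{(m,\gamma)}}$ (the element named in part~(6)); summing over $\gamma\in\mathcal F$ finishes the proof.

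I expect the main obstacle to be the base case, or more precisely the passage from the combinatorial fact that the minimal strongly fixed paths partition $Z(u)$ into finitely many pieces to the purely algebraic identity $p_{u,e_G}=\sum_{\gamma\in\mathcal F}s_{\gamma,e_G}s_{\gamma,e_G}^*$: both the finiteness of $\mathcal F$ and the disjointness hypothesis are essential, and care must be taken to argue entirely within $\Ll_R(G,E)$ rather than transporting the statement through the not-yet-known-injective map $\pi_{G,E}$. The general-$m$ step, by contrast, is a mechanical chain of the $(G,E)$-family relations and \eqref{small.3}.
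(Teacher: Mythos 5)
Your proposal proves only item~(8) of the lemma; items (1)--(7) are nowhere argued, and this is a genuine gap, not a cosmetic one. The statements that the $U_{(m,\gamma)}$ are compact open bisections, the equivalence (a)$\Leftrightarrow$(b)$\Leftrightarrow$(c) in (2), the mutual disjointness in (3)--(4), the partition $\Theta_{(u,m,m^{-1}\act{}u)}=\bigsqcup_{\gamma\in\mathcal F}U_{(m,\gamma)}$ in (5), the computation $\pi_{G,E}(p_U)=1_{\Theta_{(\gamma,\varphi(m^{-1},\gamma)^{-1},m^{-1}\act{}\gamma)}}=1_U$ in (6), and the equivalence $U=V\Leftrightarrow p_U=p_V$ in (7) are all invoked separately in the proof of Lemma~\ref{final110paths.new}, so the lemma cannot be reduced to (8) alone; the germ-level arguments that drive (2), (6) and (7) --- Lemma~\ref{Um=Uniff} together with \cite[Proposition~5.6]{MR3581326} --- have no counterpart in your text. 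Moreover your framing premise is slightly off: the paper does \emph{not} pass from the groupoid identity (5) to the algebra identity (8) via injectivity of $\pi_{G,E}$. The bridge is Lemma~\ref{lem.tU}(2), proved in Section~\ref{section.three} entirely inside $\Ll_R(G,E)$ and valid without any Hausdorff assumption: any two finite families of sets $\Theta_s$ partitioning the same compact open bisection give the same element $t^J=\sum_{(\alpha,g,\beta)\in J}s_{\alpha,g}s_{\beta,e_G}^*$. The paper's proof of (8) is then one line: $I=\{(u,m,m^{-1}\act{}u)\}$ and $J=\{(\gamma,\varphi(m^{-1},\gamma)^{-1},m^{-1}\act{}\gamma):\gamma\in\mathcal F\}$ partition the same bisection by (5) and (6), so $t^I=t^J$, which is exactly $p_{u,m}=\sum_{\gamma\in\mathcal F}p_{U_{(m,\gamma)}}$.

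That said, your argument for (8) itself is correct and takes a genuinely different route. The base case is sound: with $N\mydef\max_{\gamma\in\mathcal F}|\gamma|$, iterating \eqref{ref12} gives $p_{u,e_G}=\sum_{\lambda\in uE^N}s_{\lambda,e_G}s_{\lambda,e_G}^*$, and each $\lambda\in uE^N$ has a unique prefix in $\mathcal F$ --- existence because any infinite extension $x$ of $\lambda$ is strongly fixed, its shortest strongly fixed initial segment lies in $\mathcal F$ and therefore has length at most $N$ (a small point worth stating explicitly), uniqueness by minimality; the regrouping then collapses as you say. The general-$m$ step via $p_{u,m}=p_{u,e_G}p_{u,m}$, the path-level form of relation~\ref{GET.d} (which does follow by induction from Lemma~\ref{tech.lemma}(8)--(9)), and \eqref{small.3} correctly lands on $s_{\gamma,\varphi(m^{-1},\gamma)^{-1}}s_{m^{-1}\act{}\gamma,e_G}^*=p_{U_{(m,\gamma)}}$. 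What your route buys is independence of (8) from parts (5)--(6) and from Lemma~\ref{lem.tU}; what the paper's route buys is economy, recycling machinery already established. It is worth noting that the two arguments ultimately rest on the same computation: the proof of Lemma~\ref{lem.tU}(2) contains the refinement identity $s_{\alpha,g}s_{\beta,e_G}^*=\sum_{\lambda\in s(\beta)E^m}s_{\alpha g\act{}\lambda,\varphi(g,\lambda)}s_{\beta\lambda,e_G}^*$, of which your base-case collapse and general-$m$ manipulation are in effect a special case, so your proposal amounts to re-deriving by hand the fragment of Lemma~\ref{lem.tU} that (8) requires --- a legitimate and self-contained alternative for that one item, but not a proof of the lemma as stated.
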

\begin{proof}
 \eqref{1.lem} Notice that $Z(m^{-1}\act{}\gamma)$ is an open subset of $Z(m^{-1}\act{}u)$. The result now follows from \cite[ Proposition~4.18]{MR2419901}.

\eqref{2.a}$\Rightarrow$\eqref{2.b} Suppose that $y\in U \cap V$. Then $y=[(u,m, m^{-1}\act{}u), z]=[(u,n,u; n^{-1}\act{}u), z]$ for some $z\in E^\infty$. By Lemma~\ref{Um=Uniff} we have $x\mydef m\act{}z=n\act{}z$ and there exists $\alpha\in uE^*$ that is strongly fixed by $nm^{-1}$ with $x\in Z(\alpha)$. Since $z\in Z(m^{-1}\act{}\gamma)$, Lemma~\ref{Um=Uniff} gives $x=m\act{}z\in Z(m\act{}(m^{-1}\act{}\gamma))=Z(\gamma)$. Similarly $x\in Z(\eta)$, so $\gamma=\eta$. We may assume $n\neq m$. Since $x\in Z(\alpha)\cap Z(\gamma)$ we deduce that $\gamma=\alpha$ is strongly fixed by $nm^{-1}$.

\eqref{2.b}$\Rightarrow$\eqref{2.c} Now suppose that  that $\gamma=\eta$ is strongly fixed by $nm^{-1}$. Take any $y\in U$, say $y=[(u,m, m^{-1}\act{}u), z']$ for some $z'\in Z(m^{-1}\act{}\gamma)$. Using $\gamma$ is strongly fixed by $nm^{-1}$ it follows from Lemma~\ref{Um=Uniff} that
$$[(u,m, m^{-1}\act{}u), z]=[(u,n, n^{-1}\act{}u), z] \ \text{for all} \ z\in Z(m^{-1}\act{}\gamma).$$
Since $Z(m^{-1}\act{}\gamma)=Z(n^{-1}\act{}\gamma)$ we get $y\in V$. By symmetry $U=V$.

\eqref{2.c}$\Rightarrow$\eqref{2.a} is trivial, completing the proof of (2). Both (3) and (4) follow from \eqref{2.lem}.

(5) Recall that $\Theta_{(u,m,m^{-1}\act{}u)}=\{[(u,m,m^{-1}\act{}u), x]: x\in Z(m^{-1}\act{}u)\}$. Take any element $y\mydef [(u,m,m^{-1}\act{}u), z]\in \Theta_{(u,m,m^{-1}\act{}u)}$, so $z\in Z(m^{-1}\act{}u)$. Then $x\mydef m\act{}z\in Z(u)$. Since $x\in Z(u)$ is strongly fixed, there exists some initial segment $\gamma\in  uE^*\setminus\{u\}$ of $x$ such that $\gamma$ is minimal strongly fixed. Clearly $\gamma\in \mathcal{F}$. Moreover, $z=m^{-1}\act{}x\in Z(m^{-1}\act{}\gamma)$, so $y\in U_{(m,\gamma)}$. Conversely, for each $\gamma \in \mathcal{F}$ we have $Z(m^{-1}\act{}\gamma)\subseteq Z(m^{-1}\act{}u)$, so
$$\Theta_{(u,m,m^{-1}\act{}u)}=\bigcup_{\gamma\in \mathcal{F}} U_{(m,\gamma)}.$$
So (4) implies that the $U_{(m,\gamma)}$ are mutually disjoint giving the desired result.

(6) By direct computation (cf.~Remark~\ref{rem.pi.map}) one can verify that $\pi_{G,E}(p_U)=1_{\Theta_{(\gamma,\varphi(m^{-1},\gamma)^{-1}, m^{-1}\act{}\gamma)}}$. With $s\mydef(u,m,m^{-1}\act{}u)$ and $e\mydef(m^{-1}\act{}\gamma,0,m^{-1}\act{}\gamma)$ we have $[s, x]=[se,x]$ for each $x\in Z(m^{-1}\act{}\gamma)$. It follows that
\begin{align*}
U_{(m,\gamma)}&=\{[s,x]: x\in Z(m^{-1}\act{}\gamma)\}=\{[se,x]: x\in Z(m^{-1}\act{}\gamma)\}\\
&=\{[(\gamma,\varphi(m^{-1},\gamma)^{-1}, m^{-1}\act{}\gamma), x]: x\in Z(m^{-1}\act{}\gamma)\}\\
&=\Theta_{(\gamma,\varphi(m^{-1},\gamma)^{-1}, m^{-1}\act{}\gamma)}.
\end{align*}

(7) If $p_U=p_V$ then $U=V$ by (6). Conversely if $U=V$ then part (2) gives that $\gamma=\eta$ and $\gamma$ is strongly fixed by $nm^{-1}$. Hence $m^{-1}\act{}\gamma=n^{-1}\act{}\gamma$ and $\varphi(m^{-1},\gamma)=\varphi(n^{-1},\gamma)$, see \cite[Proposition~5.6]{MR3581326}. So $s_{\gamma,\varphi(m^{-1},\gamma)^{-1}}s_{m^{-1}\act{}\gamma,e_G}^*=s_{\eta,\varphi(n^{-1},\eta)^{-1}}s_{n^{-1}\act{}\eta,e_G}^*$ and $p_U=p_V$.

(8) Fix $m\in G$. Define $J\mydef{\{(\gamma,\varphi(m^{-1},\gamma)^{-1}, m^{-1}\act{}\gamma): \gamma\in \mathcal{F}\}}$ and $I\mydef\{(u,m,m^{-1}\act{}u)\}$. Then (5) and (6) give
$$ \bigsqcup_{s\in J}\Theta_{s}=\bigsqcup_{\gamma\in \mathcal{F}}\Theta_{(\gamma,\varphi(m^{-1},\gamma)^{-1}, m^{-1}\act{}\gamma)} =\bigsqcup_{\gamma\in \mathcal{F}}U_{(m,\gamma)}={\Theta_{(u,m,m^{-1}\act{}u)}}= \bigsqcup_{s\in I}\Theta_{s}.$$
With $\Jj$ as in Lemma~\ref{lem.tU} we have $I,J\in \Jj$. Using Lemma~\ref{lem.tU} we get
$$ \sum_{(\alpha,g,\beta)\in I} s_{\alpha,g}s_{\beta,e_G}^*=\sum_{(\alpha,g,\beta)\in J} s_{\alpha,g}s_{\beta,e_G}^*.$$
It follows that
\begin{align*}
p_{u,m}&=\sum_{(\alpha,g,\beta)\in J} s_{\alpha,g}s_{\beta,e_G}^*=\sum_{\gamma\in \mathcal{F}} p_{U_{(m,\gamma)}}.
\end{align*}
as claimed.
\end{proof}

\begin{lemma}
\label{final110paths.new}
Let $(G,E,\varphi)$ be as in Notation~\ref{note}. Let $R$ be a unital commutative $^*$-ring. Fix $u\in E^0$. Suppose that every $x\in Z(u)$ is strongly fixed. Let $\mathcal{F}$ be the set of all minimal strongly fixed paths with range $u$. Suppose that  $\mathcal{F}$ is finite and $Z(\gamma)\cap Z(\gamma')=\emptyset$ whenever $\gamma\neq \gamma'$ in $\mathcal{F}$. Then for each finite subset $F$ of $G$,
\begin{equation*}
\sum_{m\in F} r_m1_{\Theta_{(u,m,m^{-1}\act{}u)}}=0 \ \ \Longrightarrow\ \  \sum_{m\in F} r_mp_{m,u}=0.
\end{equation*}
\end{lemma}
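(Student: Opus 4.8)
The plan is to push the given linear relation among the indicator functions $1_{\Theta_{(u,m,m^{-1}\act{}u)}}$ down to the level of the refined sets $U_{(m,\gamma)}$, read off the resulting relations among the coefficients, and then transport these back up to a relation among the $p_{u,m}$ through the elements $p_{U_{(m,\gamma)}}$ of $\Ll_R(G,E)$. The whole argument rests on the fact, recorded in Lemma~\ref{with110paths.new}, that the two bookkeeping devices — the sets $U_{(m,\gamma)}$ and the ring elements $p_{U_{(m,\gamma)}}$ — are indexed by the very same equivalence relation on $F\times\mathcal{F}$.

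First I would invoke Lemma~\ref{with110paths.new}(5), which under the present hypotheses (every $x\in Z(u)$ strongly fixed and $\mathcal{F}$ finite) gives $1_{\Theta_{(u,m,m^{-1}\act{}u)}}=\sum_{\gamma\in\mathcal{F}} 1_{U_{(m,\gamma)}}$ for each $m\in F$. Substituting this into the hypothesis $\sum_{m\in F} r_m 1_{\Theta_{(u,m,m^{-1}\act{}u)}}=0$ turns it into
$$\sum_{(m,\gamma)\in F\times\mathcal{F}} r_m\, 1_{U_{(m,\gamma)}}=0.$$
By Lemma~\ref{with110paths.new}(3) the sets $\{U_{(m,\gamma)}:(m,\gamma)\in F\times\mathcal{F}\}$ are pairwise either equal or disjoint, and by Lemma~\ref{with110paths.new}(1),(6) each is a nonempty compact open bisection: indeed $U_{(m,\gamma)}=\Theta_{(\gamma,\varphi(m^{-1},\gamma)^{-1},m^{-1}\act{}\gamma)}$ with $Z(m^{-1}\act{}\gamma)\neq\emptyset$ since $E$ has no sources. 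I would group $F\times\mathcal{F}$ into the classes on which $U_{(m,\gamma)}$ is constant; writing $U^{(1)},\dots,U^{(k)}$ for the distinct (hence pairwise disjoint) sets arising and $S_j=\{(m,\gamma):U_{(m,\gamma)}=U^{(j)}\}$, the display becomes $\sum_{j}\big(\sum_{(m,\gamma)\in S_j} r_m\big)1_{U^{(j)}}=0$. Evaluating this genuine $R$-valued function at a point of each $U^{(j)}$ — a step that remains valid in the non-Hausdorff setting, since elements of $A_R(\Gg_{\text{tight}}(\Ss_{G,E}))$ are honest functions — and using the disjointness gives
$$\sum_{(m,\gamma)\in S_j} r_m = 0\qquad\text{for each } j.$$

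Next I would expand the target expression. By Lemma~\ref{with110paths.new}(8), again valid under the running hypotheses, $p_{u,m}=\sum_{\gamma\in\mathcal{F}} p_{U_{(m,\gamma)}}$, so
$$\sum_{m\in F} r_m\, p_{u,m}=\sum_{(m,\gamma)\in F\times\mathcal{F}} r_m\, p_{U_{(m,\gamma)}}.$$
The crucial point is that the partition of $F\times\mathcal{F}$ governing the elements $p_{U_{(m,\gamma)}}$ coincides with the one governing the sets $U_{(m,\gamma)}$: Lemma~\ref{with110paths.new}(7) states that $p_{U_{(m,\gamma)}}=p_{U_{(n,\eta)}}$ exactly when $U_{(m,\gamma)}=U_{(n,\eta)}$, so $p_U$ is a well-defined function of the set $U$ and is therefore constant on each $S_j$. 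Writing $p^{(j)}$ for its common value on $S_j$, the right-hand side equals $\sum_{j}\big(\sum_{(m,\gamma)\in S_j} r_m\big)p^{(j)}$, which vanishes termwise by the coefficient relations just obtained. Hence $\sum_{m\in F} r_m p_{u,m}=0$, as required.

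I expect the main obstacle to be precisely the bookkeeping that certifies the two groupings — one among the indicator functions, one among the $p_{U_{(m,\gamma)}}$ — to be literally the same partition of $F\times\mathcal{F}$. This is exactly what the equivalence $U=V\Leftrightarrow p_U=p_V$ of Lemma~\ref{with110paths.new}(7) delivers, and it is the hinge that lets a relation holding in $A_R(\Gg_{\text{tight}}(\Ss_{G,E}))$ be pulled back to $\Ll_R(G,E)$. A secondary point requiring care is the nonemptiness of the $U^{(j)}$, needed for the pointwise linear-independence step; this follows from the identification of $U_{(m,\gamma)}$ with a basic bisection $\Theta_{(\gamma,\varphi(m^{-1},\gamma)^{-1},m^{-1}\act{}\gamma)}$ over the nonempty cylinder $Z(m^{-1}\act{}\gamma)$.
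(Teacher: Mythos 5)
Your proposal is correct and follows essentially the same route as the paper's proof: decompose each $1_{\Theta_{(u,m,m^{-1}\act{}u)}}$ via Lemma~\ref{with110paths.new}(5), partition $F\times\mathcal{F}$ into the classes on which $U_{(m,\gamma)}$ is constant, use pairwise disjointness of the distinct sets to evaluate the function pointwise and kill each class-sum of coefficients, and then transport the relation to $\Ll_R(G,E)$ via Lemma~\ref{with110paths.new}(7) and (8) exactly as the paper does with its sets $I(p)$ and representatives $P_{\min}$. Your one addition --- explicitly certifying that each $U_{(m,\gamma)}=\Theta_{(\gamma,\varphi(m^{-1},\gamma)^{-1},m^{-1}\act{}\gamma)}$ is nonempty because $E$ has no sources, which the pointwise evaluation step silently requires --- is a welcome bit of extra care that the paper leaves implicit.
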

\begin{proof}
Suppose that  $h\mydef \sum_{m\in F} r_m1_{\Theta_{(u,m,m^{-1}\act{}u)}}$ is the zero function. For each $(m,\gamma)\in F\times \mathcal{F}$ define $r_{(m,\gamma)}\mydef r_m$. Using Lemma~\ref{with110paths.new}
$$h=\sum_{m\in F} r_m\sum_{\gamma\in \mathcal{F}} 1_{U_{(m,\gamma)}}=\sum_{(m,\gamma)\in F\times \mathcal{F}}r_m1_{U_{(m,\gamma)}}=\sum_{p\in F\times \mathcal{F}}r_p1_{U_{p}}.$$
For each $p\in F\times \mathcal{F}$ set $I(p)\mydef\{p'\in F\times \mathcal{F} :  U_{p}=U_{p'}\}$. Since $F\times \mathcal{F}$ is finite there exist a smallest set $P_{\min}\subseteq F\times \mathcal{F}$ such that $U_{p}\neq U_{p'}$ for $p\neq p'\in P_{\min}$ and $F\times \mathcal{F}=\bigcup_{p\in P_{\min}} I(p)$. For each $p\in P_{\min}$ set $s_p=\sum_{p'\in I(p)}r_{p'}$. Then
$$h=\sum_{p\in P_{\min}}\sum_{p'\in I(p)}r_{p'}1_{U_{p}}=\sum_{p\in P_{\min}}s_p1_{U_{p}}.$$
If $p_1\neq p_2\in P_{\min}$ then $U_{p_1}\neq U_{p_2}\in \mathcal{P}$, so $U_{p_1}\cap U_{p_2} =\emptyset$ by Lemma~\ref{with110paths.new}. So for $p\in P_{\min}$, we have $h(\gamma)= s_p$ for all $\gamma\in U_p$. Thus $s_p=0$ for each $p\in P_{\min}$. Hence
$$0=\sum_{p\in P_{\min}}s_pp_{U_{p}}=\sum_{p\in P_{\min}}\sum_{p'\in I(p)}r_{p'}p_{U_{p}}.$$
For each $p'\in I(p)$ we have $U_{p}=U_{p'}\in \mathcal{P}$, so Lemma~\ref{with110paths.new} gives $p_{U_{p}}=p_{U_{p'}}$. Thus
$$0=\sum_{p\in P_{\min}}\sum_{p'\in I(p)}r_{p'}p_{U_{p}}=\sum_{p\in P_{\min}}\sum_{p'\in I(p)}r_{p'}p_{U_{p'}}=\sum_{p'\in F\times \mathcal{F}}r_{p'}p_{U_{p'}}.$$
Lemma~\ref{with110paths.new} gives
$$\sum_{m\in F} r_mp_{m,u}=\sum_{m\in F} r_m\sum_{\gamma\in \mathcal{F}} p_{U_{(m,\gamma)}}
=\sum_{(m,\gamma)\in F\times \mathcal{F}}r_mp_{U_{(m,\gamma)}}=0$$
as claimed.
\end{proof}

We are now able to prove Theorem~\ref{non.Hasu.EG}.

\begin{proof}[Proof of Theorem~\ref{non.Hasu.EG}]
By the Graded Uniqueness Theorem~\ref{thma}{A}, $\pi_{G,E}$ is injective if and only if its restriction to $\Dd$ is injective, that is, if and only if for each finite subset $P$ of $E^0\times  G$,
\begin{equation*}
\sum_{(u,m)\in P} r_{(u,m)}1_{\Theta_{(u,m,m^{-1}\act{}u)}}=0 \ \ \Longrightarrow\ \ \sum_{(u,m)\in P} r_{(u,m)}p_{u,m}=0.
\end{equation*}
By the last item of Lemma~\ref{Um=Uniff} it suffices to consider one vertex at a time and show that: for each $u\in E^0$ and each finite subset $F$ of $G$,
\begin{equation*}
\label{final.thm.lin}
\sum_{m\in F} r_m1_{\Theta_{(u,m,m^{-1}\act{}u)}}=0 \ \ \Longrightarrow\ \  \sum_{m\in F} r_mp_{m,u}=0.
\end{equation*}
So suppose that $h\mydef \sum_{m\in F} r_m1_{\Theta_{(u,m,m^{-1}\act{}u)}}=0$.

Firstly suppose that there exists  $x\in Z(u)$ that is not strongly fixed. By Lemma~\ref{with111paths} the indicator functions of the sets $\{\Theta_{(u,m,m^{-1}\act{}u)}, m\in F\}$ are linearly independent. Thus each $r_m=0$, and so $\sum_{m\in F} r_mp_{u,m}=0$.

Secondly suppose that $\mathcal{F}_u$, the set of all minimal strongly fixed paths with range $u$, is finite. Then Lemma~\ref{final110paths.new} gives $\sum_{m\in F} r_mp_{u,m}=0$.
\end{proof}

Having Theorem~\ref{non.Hasu.EG} at our disposal we can now prove Theorem~\ref{thmc}{C} by simply verifying that each Katsura triple $(\ZZ,E,\varphi)$ with $E$ finite satisfies the conditions set out in Theorem~\ref{non.Hasu.EG}. To do this we recall some terminology. Let $E$ be any directed graph and let $A,B$ be integer valued $E^0 \times E^0$ matrices. Recall that $E^*$ denotes the set of finite paths in $E$. For a path $\alpha\in E^*$ and $i\in\{1,\dots, |\alpha|\}$ we let $\alpha_i$ be the $i$th edge of $\alpha$ so $\alpha=\alpha_1\alpha_2\dots \alpha_{|\alpha|}$. For $l\in \NN$ and $i\in\{1,\dots, |\alpha|\}$ define $K^{\alpha,l}_i\mydef l\,\frac{B_{r(\alpha_1)s(\alpha_1)}\cdots B_{r(\alpha_i)s(\alpha_i)}}{A_{r(\alpha_1)s(\alpha_1)}\cdots A_{r(\alpha_i)s(\alpha_i)}}$, cf.~\cite{ExePar, MR3581326}.
We finally proceed with the proof of Theorem~\ref{thmc}{C}.

\begin{proof}[Proof of Theorem~\ref{thmc}{C}]
Fix any vertex $u\in E^0$. We must show that $Z(\gamma)\cap Z(\gamma')=\emptyset$ whenever $\gamma\neq\gamma'\in \mathcal{F}_u$ and that $\mathcal{F}_u$ is finite whenever every $x\in Z(u)$ is strongly fixed and $N = |E^0|<\infty$.

Fix $\gamma\neq\gamma'$ of $\mathcal{F}_u$. Since $\gamma$ is minimal strongly fixed, it is  minimal strongly fixed by some $l\geq 1$. Hence
\begin{align*}
&\varphi(l, \gamma_1\dots \gamma_{i})=K^{\gamma,l}_{i}\in \ZZ\smallspace\setminus\smallspace \{0\}  \quad\text{for} \ i<|\gamma|, \ \text{and}\\
&\varphi(l, \gamma)=K^{\gamma,l}_{|\gamma|}=0.
\end{align*}

It follows that $B_{r(\gamma_i)s(\gamma_i)}\neq 0$ for all $i<|\gamma|$ and $B_{r(\gamma_i)s(\gamma_i)}= 0$ for  $i=|\gamma|$. By symmetry, if one of $\gamma, \gamma'$ is an initial segment of the other then they must have the same length. Hence $Z(\gamma)\cap Z(\gamma')\neq \emptyset$ implies $\gamma=\gamma'$, so $Z(\gamma)\cap Z(\gamma')=\emptyset$ whenever $\gamma\neq\gamma'$.

Now suppose that every $x\in Z(u)$ is strongly fixed and $N = |E^0|<\infty$. Fix $\beta\in uE^{N}$. We claim that $\beta$ is strongly fixed. To see this let $x\in E^\infty$ be an infinite path having $\beta$ as an initial segment. Find the shortest initial segment $\beta_x\in E^*\setminus \{u\}$ of $x$ that is strongly fixed. Say $\beta_x$ is fixed by $m\neq 0$. We suppose that $|\beta_x|> N$ and derive a contradiction. Since $|\beta_x|> N$, any initial segment $\beta_n$ of $x$ of length $n\in \{1,\dots, N\}$ must satisfy $\varphi(m,\beta_n)\neq 0$ because $\beta_x$ is the shortest segment that is strongly fixed by $m$. Since $N=|E^0|$, one of the paths $\beta_n$ has the form $\alpha\gamma$ where $\gamma$ is a loop. By construction $\varphi(m, \alpha\gamma)\neq 0$ so $B_{r(\alpha_i)s(\alpha_i)}\neq 0$ for $i\leq |\alpha|$ and $B_{r(\gamma_i)s(\gamma_i)}\neq 0$ for all $i\leq |\gamma|$. Hence $x\mydef \alpha\gamma\gamma\gamma\dots\in Z(u)$ is not strongly fixed. This contradicts that every $x\in Z(u)$ is strongly fixed. We conclude that $|\beta_x|\leq N$.
Since $|\beta|=N$, $\beta_x$ as an initial segment
of $\beta$. So $\beta$ is strongly fixed. Since $uE^{N}$ is finite and $\mathcal{F}_u$ is a subset of $uE^{N}$ we deduce that $\mathcal{F}_u$ is finite. The result now follows from Theorem~\ref{non.Hasu.EG}.
\end{proof}

\begin{remark}
It may happen that all the sets $\{\Theta_{(w,m,w)}: m\in \ZZ\}$ are identical. This is the case, for example, for $w\in E^0$ in Example~\ref{exp.2}. In this situation, the corresponding row of $B$ is identically $0$.
\end{remark}

\bibliographystyle{plain}

\end{document}